\newtheorem{theorem}{Theorem}[section]
\newtheorem{lemma}[theorem]{Lemma}
\newtheorem{corollary}[theorem]{Corollary}
\newtheorem{proposition}[theorem]{Proposition}
\theoremstyle{definition}
\newtheorem{definition}[theorem]{Definition}
\newtheorem{problem}[theorem]{Problem}
\theoremstyle{remark}
\newcommand{\PG}[2]{\text{PG}(#1,#2)}
\newcommand{\ex}{\mathrm{ex}}
\newcommand{\cl}{\mathrm{cl}}
\newcommand{\trivialgroup}{\mathbb{Z}_1}
\newcommand{\FM}{\mathrm{FM}}
\tikzstyle{v}=[circle, draw, solid, fill=black, inner sep=0pt, minimum width=3pt]
\tikzstyle{balanced edge}=[draw=black, thick]
\tikzstyle{unbalanced edge}=[draw=red, thick, dashed]
\title{Tur\'{a}n's theorem for Dowling geometries}
\author[1]{Rutger Campbell}
\author[2]{Donggyu Kim}
\author[3]{Jorn van der Pol}
\affil[1]{School of Computing, KAIST,
Daejeon,~South~Korea}
\affil[2]{School of Mathematics, Georgia Institute of Technology, Atlanta, USA}
\affil[3]{Department of Applied Mathematics, University of Twente, Enschede, The Netherlands}
\affil[ ]{Email: \texttt{rutger@ibs.re.kr}, \texttt{donggyu@gatech.edu}, \texttt{j.g.vanderpol@utwente.nl}}
\begin{document}
\maketitle

\begin{abstract}
    The Dowling geometry $Q_n(\Gamma)$, where $\Gamma$ is a finite group, is a matroid that generalizes the complete-graphic matroid $M(K_{n+1})$. We determine the maximum size of an $N$-free submatroid of $Q_n(\Gamma)$ for various choices of $N$, including subgeometries $Q_m(\Gamma')$, lines $U_{2,\ell}$, and graphic matroids $M(H)$. When the group $\Gamma$ is trivial and $N=M(K_t)$, this problem reduces to Tur\'{a}n's classical result in extremal graph theory. We show that when $\Gamma$ is nontrivial, a complex dependence on $\Gamma$ emerges, even when $N=M(K_4)$.
\end{abstract}

\section{Introduction}

For a graph $H$, write $\ex(n,H)$ for the \emph{Tur\'{a}n number} or \emph{extremal number} of $H$: the maximum number of edges in an $n$-vertex graph that does not contain a subgraph isomorphic to $H$. One of the earliest results in extremal combinatorics is Mantel's theorem~\cite{Mantel1910}, which states that $\ex(n,K_3) = \lfloor {n^2}/{4} \rfloor$ and that the balanced complete bipartite graph is the only graph that attains this bound.
Tur\'{a}n~\cite{Turan1941} generalized this result by determining $\ex(n,K_{t+1})$ and showing that the densest $K_{t+1}$-free graphs are balanced complete $t$-partite graphs.
Erd\H{o}s, Stone, and Simonovits~\cite{ES1946,ES1966} showed that $\ex(n,H)$ is asymptotically determined by the chromatic number $\chi(H)$ when $H$ is not bipartite, by showing that $\ex(n,H) = \frac{\chi(H)-2}{\chi(H)-1}\binom{n}{2} + o(n^2)$.

Analogous results have been obtained for finite projective geometries. For a prime power $q$ and integers $n \ge t \ge 2$, write $\ex(\PG{n-1}{q},\PG{t-1}{q})$ for the maximum number of points in a subset of $\PG{n-1}{q}$ that does not contain a copy of $\PG{t-1}{q}$. Bose and Burton~\cite{BB1966} showed that $\ex(\PG{n-1}{q},\PG{t-1}{q}) = \frac{q^n-q^{n-t+1}}{q-1}$, and that the maximum is achieved only by the set of points obtained by removing a rank-$(n-t+1)$ flat from $\PG{n-1}{q}$. More recently, Geelen and Nelson~\cite{GeelenNelson2015} proved an analogue of the Erd\H{o}s--Stone--Simonovits theorem.

Tur\'{a}n-type problems such as the above are studied in a wide range of settings -- the analogous problem for hypergraphs is famously hard, see for example the survey by Keevash~\cite{Keevash2011}.

In this paper, we generalize Tur\'{a}n's theorem to Dowling geometries. For a finite group $\Gamma$, the rank\nobreakdash-$n$ Dowling geometry $Q_n(\Gamma)$ is a matroid whose structure is determined by the multiplicative structure of $\Gamma$ (we provide a proper definition in Section~\ref{sec: preliminaries}). Dowling geometries share many properties with both projective geometries as well as graphs (in fact, when $\Gamma$ is the trivial group, $Q_n(\Gamma)$ is isomorphic to the complete-graphic matroid $M(K_{n+1})$) and are therefore a natural setting in which to study Tur\'{a}n-type problems.

We consider $\ex(Q_n(\Gamma), Q_t(\Gamma'))$ for $\Gamma'$ a subgroup of $\Gamma$ and $t \le n$, i.e.\ the maximum number of points in a $Q_t(\Gamma')$-free submatroid of $Q_n(\Gamma)$.
When $\Gamma$, and hence $\Gamma'$, is trivial, this problem is equivalent to Tur\'{a}n's theorem, so we focus on nontrivial $\Gamma$.

We first identify the extremal function for subgeometries over nontrivial subgroups.

\begin{theorem}\label{thm intro: excluding subgeometry}
    Let $n\geq t \ge 3$ and let $\Gamma'$ be a nontrivial subgroup of $\Gamma$. Then
    \begin{equation*}
        \ex(Q_n(\Gamma), Q_t(\Gamma')) = |Q_n(\Gamma)| - n + t - 1 
        = |\Gamma|\binom{n}{2} + t -1.
    \end{equation*}
\end{theorem}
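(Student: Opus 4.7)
I would prove matching upper and lower bounds; both rely on a structural \emph{joint lemma}: if $\Gamma'$ is nontrivial and $t \ge 3$, then the joints of every $Q_t(\Gamma')$-subgeometry of $Q_n(\Gamma)$ are joints of $Q_n(\Gamma)$. The proof of this lemma would use the line structure of $Q_n(\Gamma)$: every line has size either $3$ (a triangle line) or $|\Gamma|+2$ (a long line of the form $\mathrm{span}(e_i,e_j)\cap Q_n(\Gamma)$); each joint of $Q_n(\Gamma)$ lies on $n-1$ long lines, whereas each edge lies on exactly one. Inside a $Q_t(\Gamma')$-subgeometry, any joint lies on $t-1 \ge 2$ long lines of size $|\Gamma'|+2 \ge 4$, and each such line must extend to a long line of $Q_n(\Gamma)$ since $|\Gamma'|+2 > 3$; hence it lies on at least two long lines of $Q_n(\Gamma)$, forcing it to be a joint of $Q_n(\Gamma)$.

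For the lower bound, I would take $S$ to be all $|\Gamma|\binom{n}{2}$ non-joint points of $Q_n(\Gamma)$ together with the joints on coordinates $\{1,\dots,t-1\}$, so $|S| = |\Gamma|\binom{n}{2}+t-1$. Since $S$ contains only $t-1$ joints of $Q_n(\Gamma)$, the joint lemma immediately implies that $S$ is $Q_t(\Gamma')$-free.

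For the upper bound, suppose $|S| \ge |\Gamma|\binom{n}{2}+t$ and let $j$ and $e$ denote the number of joints and edges of $Q_n(\Gamma)\setminus S$, so $j+e \le n-t$. Let $I$ be the set of coordinates whose joint lies in $S$, giving $|I|=n-j$, and form an auxiliary graph $G$ on $I$ whose edges are the pairs $\{i,k\}$ for which some edge of $Q_n(\Gamma)$ between coordinates $i$ and $k$ is missing from $S$; then $|E(G)| \le e$. A greedy argument (remove one endpoint of each edge of $G$) produces an independent set $I' \subseteq I$ with $|I'| \ge (n-j)-e \ge t$. Any $t$-subset $I''$ of $I'$ consists of $t$ coordinates on which every joint and every interconnecting edge belongs to $S$, so $S$ contains the full $Q_t(\Gamma)$ on $I''$, and in particular a copy of $Q_t(\Gamma')$.

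The main obstacle is the joint lemma together with the supporting description of line sizes in $Q_n(\Gamma)$; the upper bound is then a short counting argument once one identifies the correct auxiliary graph. One also needs the minor check that the set of joints of a $Q_t(\Gamma')$-subgeometry is matroid-intrinsic under the hypotheses, which is automatic: when $\Gamma'$ is nontrivial and $t \ge 3$, the joints are precisely the points lying on $t-1$ long lines of the subgeometry.
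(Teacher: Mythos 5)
Your proposal is correct and follows essentially the same route as the paper: the key ``joint lemma'' you state and prove is exactly the paper's Lemma~\ref{lemma:large-subgroup-embedding} (and the paper proves it via the same observation about points on two very long lines), the lower-bound construction is the same (delete $n-t+1$ joints, keeping $t-1$), and your auxiliary-graph/greedy argument for the upper bound is a slightly different bookkeeping of the paper's argument in the proof of Theorem~\ref{thm: excluding subgeometry}, which replaces each missing non-joint $f_j$ by one of the two joints on its very long line and then picks $t$ surviving joints. The paper goes further and classifies the extremal sets, but that is not required for the stated theorem.
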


The situation is more subtle when $\Gamma'$ is the trivial group, i.e.\ when $Q_t(\Gamma')$ is a complete-graphic matroid.
In this case, our results are phrased in terms of \emph{Tur\'{a}n density} for brevity. For a fixed group $\Gamma$ and matroid $N$, a standard averaging argument shows that $\ex(Q_n(\Gamma), N)/(|\Gamma| \binom{n}{2})$ is decreasing in $n$, and hence that the Tur\'{a}n density
\begin{equation*}
    \pi(\Gamma, N) = \lim_{n \to \infty} \frac{\ex(Q_n(\Gamma), N)}{|\Gamma|\binom{n}{2}}
\end{equation*}
exists.
\begin{theorem}[Tur\'{a}n's theorem for Dowling geometries]\label{thm intro: excluding large Kt}
    Let $t \ge 5$ and let $\Gamma$ be a finite group.
    Then 
    \begin{equation*}
        \pi(\Gamma, M(K_t)) = \frac{t-2}{t-1}.
    \end{equation*}
\end{theorem}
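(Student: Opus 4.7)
The plan is to establish matching lower and upper bounds on the Turán density. The upper bound reduces cleanly to classical Turán via a covering-graph construction, while the lower bound requires a Turán-type construction whose $M(K_t)$-freeness is where the hypothesis $t \ge 5$ enters.

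\smallskip
\noindent\emph{Upper bound.} For $M \subseteq Q_n(\Gamma)$ (joints contribute $O(n)$ to the count and are asymptotically negligible), I would define an auxiliary simple graph $H$ on vertex set $[n] \times \Gamma$ by declaring $(i,\phi) \sim (j,\psi)$ exactly when $i \ne j$ and $(ij, \phi^{-1}\psi) \in M$. Each element of $M$ lifts to exactly $|\Gamma|$ edges of $H$, so $|E(H)| = |\Gamma|\cdot|M|$ while $|V(H)| = n|\Gamma|$, matching the Dowling density asymptotically. The classes $\{i\}\times\Gamma$ are independent sets, so $H$ is $n$-partite. Consequently, any $K_t$ in $H$ has $t$ distinct first coordinates $v_1, \ldots, v_t$, and its edges $(v_iv_j, \phi_i^{-1}\phi_j)$ form a balanced $K_t$ in $Q_n(\Gamma)$ (the gain around any cycle telescopes to the identity), which is precisely a copy of $M(K_t)$. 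Applying Turán's theorem to $H$ then produces a $K_t$ whenever $|M| > \bigl(\tfrac{t-2}{t-1} + o(1)\bigr)|\Gamma|\binom{n}{2}$, giving $\pi(\Gamma, M(K_t)) \le \tfrac{t-2}{t-1}$.

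\smallskip
\noindent\emph{Lower bound.} Partition $[n]$ into $t-1$ classes $V_1, \ldots, V_{t-1}$ of nearly equal size and take $M = \{(ij,g) : i, j \text{ in distinct classes},\ g \in \Gamma\}$, \emph{omitting all joints}. Then $|M| = |\Gamma|\cdot e(T(n,t-1)) = \bigl(\tfrac{t-2}{t-1} + o(1)\bigr)|\Gamma|\binom{n}{2}$, matching the claimed density. To verify $M$ is $M(K_t)$-free, I would consider a hypothetical copy $N \cong M(K_t) \subseteq M$; it is joint-free by construction, and the biased-graph rank formula $r(N) = |V(N)| - b(N)$ forces the support either to cover $t$ coordinates as one balanced component or to cover $t-1$ coordinates as one unbalanced component (other $(|V|, b)$ splits require still more coordinates and reduce to these). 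The first case would exhibit $K_t$ on the coordinates, contradicting $K_t \not\subseteq T(n,t-1)$.

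\smallskip
\noindent\emph{Main obstacle.} Ruling out the second case -- a joint-free $M(K_t)$ supported on only $t-1$ coordinates as a single unbalanced component -- is the crux and exactly where $t \ge 5$ enters. For $t=4$ and $|\Gamma|=2$ this case genuinely occurs: the six labeled edges on three coordinates are isomorphic to $M(K_4)$, which is the source of the abstract's remark about the $M(K_4)$ problem. For $t \ge 5$ I would run a counting argument, comparing structural data on both sides: every element of $M(K_t)$ lies in exactly $t-2$ triangles and the matroid has exactly $\binom{t}{3}$ rank-$2$ flats of size $3$, whereas on $t-1$ coordinates the number of balanced triangles through a labeled edge and the total triangle count are constrained by $|\Gamma|$ and by the fact that every rank-$2$ flat of size $3$ in $Q_n(\Gamma)$ without joints is a balanced triangle on three coordinates. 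A pigeonhole check in the same spirit as the $t=5$, $|\Gamma|=2$ case (where the $12$-element joint-free matroid on $4$ coordinates has only $16$ balanced triangles, and no two-element deletion leaves exactly the $10$ triangles of $M(K_5)$) should close the case for all $t \ge 5$. This structural matroid step carries the technical weight; once it is in hand, the lower and upper bounds combine to give $\pi(\Gamma, M(K_t)) = \tfrac{t-2}{t-1}$.
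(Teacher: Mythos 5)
Your upper bound is correct and takes a genuinely different route from the paper's. You lift a dense $M\subseteq Q_n(\Gamma)$ to the $|\Gamma|$-fold covering graph on $[n]\times\Gamma$ and apply Tur\'an's theorem there; any $K_t$ in the cover projects to a balanced $K_t$ because the gains $\phi_i^{-1}\phi_j$ telescope, and balanced $K_t$'s give $M(K_t)$-restrictions. The paper instead proves the more general comparison $|\Gamma|^{-1}\ex(Q_n(\Gamma),N)\le|\Gamma'|^{-1}\ex(Q_n(\Gamma'),N)$ (Theorem~\ref{thm:gain ESS}) by a switching/averaging argument that reduces to the trivial group, then invokes Erd\H{o}s--Stone--Simonovits; your lift argument is a clean alternative for the special case $N=M(K_t)$, while the paper's version also yields the bound for arbitrary graphs $H$ and arbitrary subgroups.

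The genuine gap is in the lower bound, precisely at the step you yourself flag as the ``main obstacle.'' You must show that the blown-up Tur\'an graph $T_{n,t-1}^\Gamma$ is $M(K_t)$-free for $t\ge 5$, i.e.\ that no loopless gain graph whose underlying simple graph is $(t-1)$-colorable has frame matroid isomorphic to $M(K_t)$; after your rank reduction this amounts to excluding a joint-free copy supported on $t-1$ vertices as a single unbalanced component, uniformly over all finite groups $\Gamma$. Your proposal only sketches a triangle-counting/pigeonhole heuristic and ends with ``should close the case,'' which is not a proof; moreover the count is not routine, since for $|\Gamma|\ge 3$ a parallel class of three edges is itself a three-point line of the frame matroid (an unbalanced theta), so the triangle and line statistics of candidate presentations depend on $\Gamma$ in a nontrivial way, and the worked numbers you quote are only for $t=5$, $|\Gamma|=2$. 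This missing step is where the paper does its real structural work: it characterizes the gain-graphic presentations of $M(K_t)$ (Lemma~\ref{lem: Z2 realizations} for $\mathbb{Z}_2$, with the general case resting on the Chen--DeVos--Funk--Pivotto characterization of frame representations of graphic matroids~\cite{CDFP2015}), concluding that for $t\ge 5$ every presentation other than the balanced $K_t$ contains unbalanced loops, i.e.\ $K_t$ is $\Gamma$-critical, so the critical-pair proposition yields the lower bound. Until you supply a complete argument for this structural statement, your write-up establishes only $\pi(\Gamma,M(K_t))\le\frac{t-2}{t-1}$. (Two minor points in your balanced case: you should note that connectivity of $M(K_t)$ forces the support to be a single component, and that simplicity of $M(K_t)$ forces the balanced support on $t$ vertices to be the simple graph $K_t$; both are easy but currently implicit.)
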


Theorem~\ref{thm intro: excluding large Kt} leaves open the cases $t=3$ and $t=4$. In these cases, $\pi(\Gamma, M(K_t))$ depends on the group $\Gamma$ as well as on $t$.

For $t=3$, we obtain a generalization of Mantel's theorem to Dowling geometries.

\begin{theorem}[Mantel's theorem for Dowling geometries]\label{thm intro: Mantel for DG}
    Let $n\ge 2$ be an integer.
    \begin{enumerate}[(i)]
        \item $\ex(Q_{n-1}(\trivialgroup),M(K_3)) = \lfloor n^2/4 \rfloor $.
        \item $\ex(Q_n(\mathbb{Z}_2), M(K_3)) = \lfloor n^2/2 \rfloor$.
        \item $\ex(Q_n(\mathbb{Z}_3),M(K_3)) = \lceil n^2/2\rceil.$
        \item $\ex(Q_n(\Gamma),M(K_3)) = 2\binom{n}{2}$ whenever $\Gamma$ is a group of order at least~4.
    \end{enumerate}
\end{theorem}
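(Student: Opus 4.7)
My approach is to handle the four claims separately, each via a construction together with a matching upper bound. Writing $J \subseteq [n]$ for the joint indices of $S$ and $S_{ij} \subseteq \Gamma$ for the gains $g$ with $p_{ij}^g \in S$, the condition $|S \cap L| \le 2$ for every line $L$ translates into $S_{ij} = \emptyset$ when $i, j \in J$, $|S_{ij}| \le 1$ when exactly one of $i, j$ lies in $J$, and $|S_{ij}| \le 2$ otherwise; in addition, $S$ must contain no balanced triangle $\{p_{ij}^a, p_{jk}^b, p_{ik}^{ab}\}$. Part (i) is then immediate from Mantel's theorem applied to $Q_{n-1}(\trivialgroup) \cong M(K_n)$.

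For part (iv), the pair-counting $|S| \le |J| + |J|(n-|J|) + 2\binom{n-|J|}{2} = |J| + (n-|J|)(n-1)$ is maximized by $|J| = 0$ and gives $|S| \le 2\binom{n}{2}$. To match this bound I would prove a short group-theoretic lemma: every group $\Gamma$ with $|\Gamma| \ge 4$ contains a two-element product-free subset $T$ (i.e.\ $(T \cdot T) \cap T = \emptyset$). When $\Gamma$ has an index-two subgroup $H$, any two elements of $\Gamma \setminus H$ work, and the remaining groups ($\mathbb{Z}_5$, $(\mathbb{Z}_3)^2$, $A_4$, and so on) are handled by explicit choice. Setting $J = \emptyset$ and $S_{ij} = T$ for every pair then yields $|S| = 2\binom{n}{2}$ with no balanced triangle, since such a triangle would require a product of two gains in $T$ to lie in $T$.

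For part (ii), the construction is the bipartite support $[n] = A \sqcup B$ with $|A| = \lfloor n/2 \rfloor$, $S_{ij} = \mathbb{Z}_2$ on crossing pairs, and $J = \emptyset$; this gives $2|A||B| = \lfloor n^2/2 \rfloor$ points and a bipartite support with no triangle. For the upper bound I would form the bipartite double cover $\tilde G(S)$ on vertex set $[n] \times \{+,-\}$, adding the edge $(i,\epsilon_1)(j,\epsilon_2)$ whenever $p_{ij}^{\epsilon_1 \oplus \epsilon_2} \in S$ (reading $+$ as $0$ and $-$ as $1$) and the edge $(i,+)(i,-)$ for each joint $p_i \in S$, so that $|E(\tilde G)| = 2|S| - |J|$. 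A triangle in $\tilde G$ either involves three distinct $[n]$-coordinates --- in which case it corresponds to a balanced matroid triangle, since the three pairwise XORs of signs sum to zero modulo two --- or has two vertices sharing an $[n]$-coordinate, which forces a joint of $S$ together with both non-joint points on a single line; in either case $S$ would contain a matroid triangle, so $\tilde G$ must be triangle-free. Mantel's theorem then yields $|E(\tilde G)| \le n^2$; since the fixed-point-free involution $\sigma:(i,\pm)\mapsto(i,\mp)$ is a graph automorphism, equality $\tilde G = K_{n,n}$ requires $\sigma$ either to preserve the bipartition (forcing $n$ even and $J = \emptyset$) or to swap it (forcing $J = [n]$ together with non-joint points on every pair, violating $M(K_3)$-freeness). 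Combined with a separate Mantel estimate on the induced subgraph $\tilde G[([n] \setminus J) \times \{\pm\}]$, which gives $|E(\tilde G)| \le n^2 - |J|(|J|-1)$ and handles $|J| \ge 2$, one obtains $|S| \le \lfloor n^2/2 \rfloor$ after rounding to integers.

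For part (iii), the construction takes the bipartite support $[n] = A \sqcup B$ with $|A| = |B| = n/2$ and $S_{ij} = \{1,2\}$ on crossing pairs when $n$ is even, giving $n^2/2$ points; for odd $n$ one adds a joint $p_1$ with $S_{1v} = \{0\}$ for $v \ne 1$ and bipartitions the remaining $n-1$ indices, producing $1 + (n-1) + (n-1)^2/2 = (n^2+1)/2$ points. A short case analysis on which of $i, j, k$ equals $1$ shows that each potential balanced triangle would require a gain outside the chosen gain sets. The matching upper bound is the main obstacle. The key structural observation is that in $\mathbb{Z}_3$ the sumset of any two $2$-element subsets equals $\mathbb{Z}_3$, so any triangle of the support graph containing two or more pairs with $|S_{ij}| = 2$ is automatically balanced; consequently the $2$-gain subgraph is triangle-free, and more strongly the $2$-gain neighborhood of every vertex is an independent set in the support. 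Combining the resulting Mantel bound on the $2$-gain graph with a careful analysis of the $1$-gain edges and joints to reach the exact value $\lceil n^2/2 \rceil$ is the delicate step: I expect a short exchange argument to reduce to the case $|J| \le 1$, followed by a parity case distinction to absorb the extra $+1$ appearing for odd $n$.
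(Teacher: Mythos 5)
Your reduction of $M(K_3)$-freeness to ``at most two points per long line plus no balanced triangle'' is the right framework, and parts (i), (ii) and (iv) are essentially sound. Part (ii) in particular takes a genuinely different route from the paper: you bound things via a bipartite double cover and Mantel's theorem (with the equality analysis handling $|J|\le 1$ and the refined count $|E(\tilde G)|\le n^2-|J|(|J|-1)$ handling $|J|\ge 2$), whereas the paper argues by induction on $n$, deleting the two endpoints of a doubled pair and bounding the cross-degree of every other vertex by $2$; your argument checks out and is arguably cleaner for the bound alone, though the paper's induction also yields the characterization of extremal matroids. Part (iv) follows the paper's strategy exactly (your ``two-element product-free set'' is precisely the paper's good pair), but your proof of the existence lemma is incomplete as stated: ``index-two subgroup, else finitely many explicit cases'' does not cover the infinitely many groups with no index-two subgroup (all groups of odd order at least $5$, $A_5$, etc.). The fix is easy and is what the paper does: if $\Gamma$ has an element $g$ of order at least $4$, take $\{g,g^{-1}\}$; otherwise take $x$ of order $2$ or $3$ and any $y\notin\langle x\rangle$.

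The genuine gap is the upper bound in part (iii). Your construction and its verification are fine (it matches the paper's), but for the bound $\ex(Q_n(\mathbb{Z}_3),M(K_3))\le\lceil n^2/2\rceil$ you only record two observations -- by Cauchy--Davenport the sumset of two $2$-element subsets of $\mathbb{Z}_3$ is all of $\mathbb{Z}_3$, so the doubled-pair graph is triangle-free and its neighbourhoods are independent in the support -- and then defer the actual estimate to an ``expected'' exchange argument. Those observations do not by themselves give the bound: the pairs with $|S_{ij}|=1$ are completely unconstrained by the doubled-pair structure (they alone can contribute $\binom{n}{2}$ points on a complete support), and trading doubled edges against single edges and joints to land on the exact value $\lceil n^2/2\rceil$ is precisely the content of the statement, not a routine afterthought. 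The paper closes this by induction on $n$: if no pair is doubled then $|E(G)|\le k+\binom{n}{2}-\binom{k}{2}\le\binom{n}{2}+1<\lceil n^2/2\rceil$; otherwise pick a doubled pair $\{i,j\}$, observe that every other vertex sends at most two edges into $\{i,j\}$ (three such edges force a balanced triangle or a $3$-edge theta together with the doubled pair), delete $i,j$ and apply the induction hypothesis, giving $|E(G)|\le 2+2(n-2)+\lceil(n-2)^2/2\rceil=\lceil n^2/2\rceil$. Until you supply an argument of this kind, part (iii) of the theorem is not proved.
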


For $\Gamma = \mathbb{Z}_2$, we additionally identify all extremal matroids.

For $t=4$, the picture is more complicated.
\begin{theorem}\label{thm intro: K4}
    If $\Gamma = \mathbb{Z}_2$ or $\Gamma$ is a group that has no element of order two, then $\pi(\Gamma, M(K_4)) = \frac{2}{3}$.
\end{theorem}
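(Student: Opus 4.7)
For the lower bound $\pi(\Gamma, M(K_4)) \ge 2/3$, I distinguish two cases. When $\Gamma$ has no element of order two, the Turán blowup works: let $T(n,3)$ be the balanced Turán graph on $[n]$ and let $M$ consist of all gain edges on the pairs in $E(T(n,3))$ with no joints. Any copy of $M(K_4)$ in $Q_n(\Gamma)$ arises as one of (A) a balanced gain $K_4$ on four coordinates; (B) three coordinates with their three joints plus a balanced gain triangle; or (C) six gain edges on three coordinates (two gains per pair) forming an $M(K_4)$-restriction via balance conditions. (A) is excluded by the $K_4$-freeness of $T(n,3)$; (B) by the absence of joints; and when $\Gamma$ has no element of order two, a direct check shows (C) yields at most three (never four) balanced triangles among the eight candidate triples, so the six-element restriction has the wrong circuit count to be $M(K_4)$. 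This gives $|M| = (2/3-o(1))|\Gamma|\binom{n}{2}$. For $\Gamma=\mathbb{Z}_2$ the naive blowup does contain $M(K_4)$ via (C), so a separate construction is required, and I would seek one combining weight-$2$ gains on a triangle-free support with carefully signed weight-$1$ gains on a $K_4$-free extension, tuned to simultaneously block (A), (B), (C).

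For the upper bound, let $M$ be $M(K_4)$-free with gain sets $S_{ij}\subseteq\Gamma$ and densities $\alpha_{ij}=|S_{ij}|/|\Gamma|$. A balanced gain $K_4$ on $\{i,j,k,l\}$ is parameterized by a triple $(g_{ij},g_{ik},g_{il})\in S_{ij}\times S_{ik}\times S_{il}$ whose three forced gains $g_{ij}^{-1}g_{ik}$, $g_{ij}^{-1}g_{il}$, $g_{ik}^{-1}g_{il}$ lie in $S_{jk}$, $S_{jl}$, $S_{kl}$. Two immediate consequences: the full-pair graph $F=\{ij:S_{ij}=\Gamma\}$ is $K_4$-free (a $K_4$ in $F$ produces $|\Gamma|^3$ balanced $K_4$'s), so $e(F)\le(2/3)\binom{n}{2}$; and a union bound on the six pair-constraints forces $\sum_{\text{6 pairs}}\alpha_{ef}\le 5$ on each 4-tuple, giving the preliminary density bound of $5/6$. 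For $\Gamma$ without an element of order two, option (C) produces no new $M(K_4)$'s, so only (A) and (B) constrain; a stability/supersaturation argument (density above $2/3+\epsilon$ forces $M$ close to a Turán blowup, and any deviation creates a balanced $K_4$) should then close the gap. For $\mathbb{Z}_2$, option (C) additionally requires triangle-freeness of the weight-$2$ graph, and a Zykov-style symmetrization across the three weight classes $\{0,1,2\}$ combined with classical Turán on the weight-$2$ subgraph should yield $2/3$.

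The main obstacle is precisely this $5/6\to 2/3$ step: the union bound is too blunt, and sharpening it requires a counting lemma for balanced $K_4$'s that genuinely exploits the group hypothesis. I expect the $\mathbb{Z}_2$ case and the involution-free case to require substantially different technical arguments despite yielding the same density, and the $\mathbb{Z}_2$ lower-bound construction—being inherently non-Turán because of option (C)—to be another significant technical piece.
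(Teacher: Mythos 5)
Your lower bound for the involution-free case is essentially the paper's argument: the Tur\'an blow-up with no joints, together with the observation that a loopless doubled triangle can only realize $M(K_4)$ when the group has an involution (your balanced-triangle count is exactly the paper's criticality check; a small remark is that your enumeration (A)--(C) omits the realization with a single joint --- the graph $H_3$ in the paper's Lemma~\ref{lem: Z2 realizations} --- but this is harmless since your construction is jointless). The first genuine gap is the $\mathbb{Z}_2$ lower bound: you correctly note that the blow-up fails there, but you never produce a construction, only a description of what you would search for. The paper's construction (Lemma~\ref{lem: K4 lower}) is short and quite unlike your sketch: take a clique on $\lfloor 2n/3\rfloor$ vertices all of whose edges carry the single nonidentity gain $x$ (every triangle inside has gain $x^3=x\neq 1$, hence is unbalanced), join it to the remaining $\lceil n/3\rceil$ vertices by edges of \emph{both} gains, and include no loops and no edges inside the remaining set. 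Doubled pairs only cross the bipartition, so there is no doubled triangle; any four pairwise-adjacent vertices include three clique vertices, hence an unbalanced triangle, so there is no balanced $K_4$; and the loopless property rules out $K_3^1$ and $H_3$. This has $(\tfrac{2}{3}-o(1))\,|\mathbb{Z}_2|\binom{n}{2}$ elements.

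The second gap is the upper bound, for \emph{both} cases of your statement. Your program (full-pair graph plus a union bound giving density $5/6$, then ``a stability/supersaturation argument should close the gap,'' and a Zykov-style symmetrization for $\mathbb{Z}_2$) is not carried out, and as you acknowledge the $5/6\to 2/3$ step is precisely the hard part; nothing in the proposal supplies it. The paper sidesteps all of this with a one-paragraph switching reduction (Theorem~\ref{thm:gain ESS}): given a gain subgraph $G$ of $K_n^{\Gamma}$ with more than $|\Gamma|\cdot \ex(Q_n(\trivialgroup),M(K_4))$ edges, process the vertices in order and switch at each vertex so that at least a $1/|\Gamma|$ fraction of its back-edges become identity-labelled (later switchings do not disturb earlier pairs); deleting the other non-loop edges leaves an identity-labelled, hence graphic, subgraph with more than $\ex(Q_n(\trivialgroup),M(K_4))$ edges, which by Tur\'an's theorem contains a balanced $K_4$. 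This gives $\pi(\Gamma,M(K_4))\le \frac{2}{3}$ for every finite group simultaneously. The key idea missing from your proposal is exactly this reduction to the trivial group; without it (or a completed stability argument), neither the upper bound nor the $\mathbb{Z}_2$ half of the theorem is proved.
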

The assumption on $\Gamma$ is necessary. The smallest nontrivial groups that are not covered by Theorem~\ref{thm intro: K4} are $\mathbb{Z}_2 \times \mathbb{Z}_2$ and $\mathbb{Z}_4$. We show the following.
\begin{theorem}\label{thm intro: order 4}
    $\frac{4}{7} \le \pi(\mathbb{Z}_2 \times \mathbb{Z}_2, M(K_4)) \le \frac{7}{12}$ and $\frac{8}{13} \le \pi(\mathbb{Z}_4, M(K_4)) \le \frac{2}{3}$.
\end{theorem}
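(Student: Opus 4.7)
The statement contains four separate bounds, which I address in turn. For the lower bounds, I would construct small gain-graph seeds and take balanced blow-ups. Given an $r$-vertex seed in $Q_r(\Gamma)$ with label sets $S_{ij} \subseteq \Gamma$ chosen on each pair $\{i,j\}$, the blow-up replaces each seed vertex by a cloud of $n$ independent vertices and copies each labelled edge across the corresponding pair of clouds. This yields a submatroid of $Q_{nr}(\Gamma)$ whose normalised density tends, as $n\to\infty$ and $r\to\infty$, to the seed density $\sum_{i<j}|S_{ij}|/\bigl(|\Gamma|\binom{r}{2}\bigr)$. A balanced $K_4$ in the blow-up comes from four vertices in distinct clouds, so the seed must avoid any four-vertex subset admitting a consistent potential assignment $(\alpha_1,\ldots,\alpha_4)\in\Gamma^4$ with every gain $\alpha_j\alpha_i^{-1}$ lying in the corresponding $S_{ij}$. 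To prove $\pi(\mathbb{Z}_2 \times \mathbb{Z}_2, M(K_4)) \ge 4/7$ and $\pi(\mathbb{Z}_4, M(K_4)) \ge 8/13$, I would locate seeds achieving these densities (most naturally on $7$ and $13$ vertices, respectively) through a guided small-case search.

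The upper bound $\pi(\mathbb{Z}_4, M(K_4)) \le 2/3$ I would obtain by exploiting the subgroup $H = \{0,2\} \le \mathbb{Z}_4$. The labels in $H$ span a copy of $Q_n(\mathbb{Z}_2)$ inside $Q_n(\mathbb{Z}_4)$, so for any $M(K_4)$-free $X \subseteq Q_n(\mathbb{Z}_4)$ the restriction to this $\mathbb{Z}_2$-subgeometry is $M(K_4)$-free and hence has density at most $2/3$ by Theorem~\ref{thm intro: K4}. The remaining edges of $X$ carry labels in the nontrivial coset $H+1$, and each balanced $K_4$ whose labels mix both cosets forces a compatibility between the ``even'' and ``odd'' parts of $X$. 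Combining these constraints in a suitable convexity argument should extend the $2/3$ bound from the even part to $X$ as a whole.

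The upper bound $\pi(\mathbb{Z}_2 \times \mathbb{Z}_2, M(K_4)) \le 7/12$ is the main obstacle, since the denominator $12$ does not factor through any subgroup reduction. I would attempt a local counting argument, weighting each ordered triple of vertices by the number of gain labels on its third side that extend a balanced triangle to a balanced $K_4$ via a common neighbour, and averaging the constraint across all triples using Cauchy--Schwarz or a flag-algebra-style inequality. Because $4/7$ and $7/12$ differ only slightly, the certificate inequality must be chosen very carefully; any slack in the argument translates directly into a weaker bound. Locating the precise inequality, and verifying it is attained (up to lower order terms) by the $4/7$ construction, is where most of the effort will go.
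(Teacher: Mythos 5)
Your proposal does not establish any of the four bounds, and two of the gaps are structural rather than matters of detail. First, the lower bounds: your blow-up freeness criterion only excludes the four-vertex, potential-consistent realization of a balanced $K_4$, but over a group with an element of order two the matroid $M(K_4)$ is also realized by a loopless \emph{doubled triangle} on three vertices (this is precisely why $\mathbb{Z}_4$ and $\mathbb{Z}_2\times\mathbb{Z}_2$ are the delicate cases; compare Lemma~\ref{lem: Z2 realizations} and Lemma~\ref{lem: 432}, which shows that a triangle with edge multiplicities $4,3,2$ already contains an $M(K_4)$-restriction). A seed passing your test can therefore still blow up to a matroid containing $M(K_4)$ through three clouds carrying two labels each, so your criterion is not sufficient; on top of that, the seeds achieving $\frac{4}{7}$ and $\frac{8}{13}$ are never exhibited but deferred to a ``guided small-case search''. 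The paper's constructions are explicit weighted part structures: for $\mathbb{Z}_2\times\mathbb{Z}_2$, one nonidentity label inside a part of size $\frac{4}{7}n$ and all four labels to the complement (Lemma~\ref{lem: K4 lower}); for $\mathbb{Z}_4$, parts of sizes roughly $\frac{4}{13}n,\frac{4}{13}n,\frac{5}{13}n$ with label sets $\{\alpha^2\}$ inside the first two, $\{1,\alpha\}$ between them, and all of $\mathbb{Z}_4$ to the third -- and the freeness verification there explicitly rules out the three-vertex realizations as well.

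On the upper bounds: for $\pi(\mathbb{Z}_4,M(K_4))\le\frac{2}{3}$, restricting an $M(K_4)$-free set to the even-labelled subgeometry bounds only the even part; the odd-labelled edges are a priori unconstrained, so as written your plan yields at best $\frac{1}{2}\cdot\frac{2}{3}+\frac{1}{2}=\frac{5}{6}$, and the ``suitable convexity argument'' is exactly the missing content. What makes the reduction work is the switching/averaging step of Theorem~\ref{thm:gain ESS}: switch at each vertex in turn so that at least a $|\Gamma'|/|\Gamma|$ fraction of its back-edges acquires labels in the subgroup, then delete the rest; taking $\Gamma'$ trivial and invoking Erd\H{o}s--Stone--Simonovits gives $\pi(\Gamma,M(K_4))\le\frac{2}{3}$ for every group, which is how the paper gets this bound in one line. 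For $\pi(\mathbb{Z}_2\times\mathbb{Z}_2,M(K_4))\le\frac{7}{12}$ you offer only the intention to locate a Cauchy--Schwarz or flag-algebra certificate; this is the heart of the theorem and is absent from the proposal. The paper instead proves $|E(G)|\le\frac{7}{6}n^2+100n$ by induction on $n$, powered by two concrete structural lemmas -- a $(4,3,2)$-triangle forces $M(K_4)$ (Lemma~\ref{lem: 432}), as does a four-vertex configuration with edge multiplicities at least $2,2,3,3,3,3$ (Lemma~\ref{lem: 233332}) -- combined with a case analysis that either grows a triangle of large multiplicities or deletes two to four vertices and applies the induction hypothesis. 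Your general instincts (blow-ups for lower bounds, subgroup averaging for upper bounds) point in a reasonable direction, but none of the steps that actually carry the theorem is supplied or correct as stated.
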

Theorems~\ref{thm intro: excluding large Kt},~\ref{thm intro: Mantel for DG}, and~\ref{thm intro: order 4} imply that $\pi(\mathbb{Z}_2\times\mathbb{Z}_2, M(K_t)) < \pi(\mathbb{Z}_4,M(K_t))$ if and only if $t = 4$, which reflects the fact that $M(K_4)$ has a richer set of graphic presentations.
A triangle $M(K_3)$ admits three different graphic presentations: A balanced triangle, an unbalanced theta graph, and a (tight or loose) handcuff; see Section~\ref{sec: preliminaries} for definitions.
These presentations are too small to capture the distiction between nonidentity group elements, which yields the above two extremal numbers are equal to $2\binom{n}{2}$ if $t=3$.
In contrast, $M(K_t)$ with $t\ge 5$ does not admit any non-looped graphic presentation with fewer than $t$ vertices so that the usual Tur\'{a}n graph with each edge replaced by $|\Gamma|$ parallel edges is an almost-extremal example for $\ex(Q_n(\Gamma), M(K_t))$.
The 4-clique $M(K_4)$ is somewhat in-between. When $\Gamma$ has an element of order two, $M(K_4)$ can be realized by a non-looped double-edge triangle. This suggests that $\pi(\Gamma, M(K_4))$ may be smaller than $\frac{2}{3}$, the expected Tur\'{a}n density based on the edge-blow-up of the Tur\'{a}n graph, because the edge-blow-up of the Tur\'{a}n graph contains too many copies of a double-edge triangle. In fact, $\pi(\mathbb{Z}_2\times\mathbb{Z}_2,M(K_4)) < \pi(\mathbb{Z}_4, M(K_4)) \le \frac{2}{3}$ as we showed; note that $\pi(\mathbb{Z}_2\times\mathbb{Z}_2,M(K_4)) \le \pi(\mathbb{Z}_4, M(K_4))$ is unsurprising because $\mathbb{Z}_2 \times \mathbb{Z}_2$ has more symmetries than $\mathbb{Z}_4$.
This observation suggests that a direct analogue of the Erd\H{o}s--Stone--Simonovits and the Geelen--Nelson theorems for Dowling geometries may not exist or should be more complicated. The classical Tur\'{a}n densities for graphs depend only on the chromatic number, and those for projective-geometric settings depend only on the given field $\mathbb{F}_q$ and the critical exponent of the forbidden matroid over~$\mathbb{F}_q$.
While the critical exponent for Dowling geometries is well-defined and loosely generalizes the chromatic number\footnote{Let $H$ be a simple graph. The critical exponent of the graphic matroid $M=M(H)$ over the trivial group is the minimum integer $k$ such that $P_M(2^k)>0$, in comparison to that the chromatic number $\chi(H)$ is the minimum $k$ such that $P_M(k) > 0$, where $P_M$ denotes the characteristic polynomial of $M$. See~\cite{Whittle1989} and~{\cite[\S15.3]{Oxley2011}} for details}~\cite{Whittle1989}, it depends on the group only through its order; thus, $\pi(\Gamma, M(H))$ necessarily depends on more information than just the critical exponent of $M(H)$ over $\Gamma$.

We cannot fully explain the interplay between the rich graphic structure of $M(K_4)$ and groups that contain an element of order two, and leave the following as an open problem.

\begin{problem}
    Determine $\pi(\Gamma, M(K_4))$ for groups $\Gamma \neq \mathbb{Z}_2$ that contain an element of order~2.
\end{problem}

The upper bound in Theorem~\ref{thm intro: excluding large Kt} extends to the following Erd\H{o}s--Stone--Simonovits-type result. However, as illustrated by $\ex(Q_n(\mathbb{Z}_2\times\mathbb{Z}_2), M(K_4))$, the upper bound need not be tight.
\begin{theorem}\label{thm intro: gain ESS}
    Let $\Gamma$ be a finite group and let $H$ be a graph. Then
    \begin{equation*}
        \pi(\Gamma, M(H)) \le \frac{\chi(H)-2}{\chi(H)-1}.
    \end{equation*}
\end{theorem}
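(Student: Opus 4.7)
The plan is to reduce to excluding a blow-up of a complete graph, then to extend the upper bound of Theorem~\ref{thm intro: excluding large Kt} via a supersaturation argument together with a gain-graph balancing step. Set $t = \chi(H)$. The case $t \le 2$ yields the bound $\le 0$, and $\pi(\Gamma, M(H)) = 0$ holds by direct arguments (rank-counting when $H$ is a forest, or K\H{o}v\'{a}ri--S\'{o}s--Tur\'{a}n on the underlying multigraph when $H$ is bipartite with cycles); so assume $t \ge 3$. Since $\chi(H) = t$, $H$ embeds as a subgraph of the blow-up $K_t[r]$ for some $r$, whence $M(H)$ is a restriction of $M(K_t[r])$. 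Therefore every $M(H)$-free submatroid of $Q_n(\Gamma)$ is also $M(K_t[r])$-free, and $\pi(\Gamma, M(H)) \le \pi(\Gamma, M(K_t[r]))$. It suffices to show
\begin{equation*}
\pi(\Gamma, M(K_t[r])) \le \frac{t-2}{t-1} \quad\text{for all } t \ge 3 \text{ and } r \ge 1.
\end{equation*}

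Fix $\epsilon > 0$ and let $M \subseteq Q_n(\Gamma)$ be any submatroid with $|M| \ge (\tfrac{t-2}{t-1}+\epsilon)|\Gamma|\binom{n}{2}$; we may discard loops, which contribute $O(n)$ elements. Let $G$ be the simple graph on joint set $[n]$ with an edge $\{i,j\}$ whenever the line through joints $i$ and $j$ contains at least one point of $M$. Since each such line contains at most $|\Gamma|$ elements of $M$, we have $e(G) \ge |M|/|\Gamma| \ge (\tfrac{t-2}{t-1}+\epsilon)\binom{n}{2}$, so by the classical Erd\H{o}s--Stone--Simonovits theorem $G$ contains $K_t[R]$ as a subgraph for any fixed $R$, once $n$ is sufficiently large. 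Combining Theorem~\ref{thm intro: excluding large Kt} (together with Theorems~\ref{thm intro: Mantel for DG} and~\ref{thm intro: K4} for small $t$) with an averaging-over-random-$k$-subsets supersaturation argument, $M$ also contains $\Omega(n^t)$ distinct copies of $M(K_t)$. A K\H{o}v\'{a}ri--S\'{o}s--Tur\'{a}n-type argument on the $t$-uniform $t$-partite hypergraph of $t$-subsets hosting $M(K_t)$ then produces disjoint $V_1, \ldots, V_t \subseteq [n]$ with $|V_i| = R$ such that every transversal $\{v_1, \ldots, v_t\}$ (with $v_i \in V_i$) hosts a copy of $M(K_t)$ inside $M$.

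The final step --- and the principal obstacle --- is to combine these many $M(K_t)$'s on transversals of $V_1 \cup \cdots \cup V_t$ into a single balanced $K_t[r]$-sub-gain-graph of $M$, which would give $M(K_t[r]) \subseteq M$ and the desired contradiction. Each edge of the $K_t[R]$ spanned by $V_1 \cup \cdots \cup V_t$ carries one or more points of $M$ (with gains in $\Gamma$), and different $M(K_t)$-copies on transversals correspond to different gain choices per edge. Balance of the eventual $K_t[r]$ is a global cycle condition: after switching so that a fixed spanning tree of the $K_t[R]$ carries trivial gains, we need to select $r$ vertices from each $V_i$ and choose gains on each edge within the resulting $K_t[r]$ so that every non-tree edge is labelled trivially. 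An iterated pigeonhole or product-Ramsey argument --- exploiting that $|\Gamma|$ is finite and that $R$ may be taken very large in terms of $r$, $t$, and $|\Gamma|$ --- should produce such a selection, essentially by finding substructures on which the gain choices can be harmonised. The delicate point is reconciling the global nature of balance with the local nature of the pigeonhole or Ramsey steps, and this is where the bulk of the technical work is concentrated.
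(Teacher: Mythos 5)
Your reduction to $M(K_t[r])$ is fine, but the argument after that does not establish the theorem: the step you defer to ``an iterated pigeonhole or product-Ramsey argument'' is not a routine technicality, it is exactly where the whole difficulty of the statement lives, and the set-up you build before it does not make it go through. Two concrete problems. First, supersaturation gives copies of $M(K_t)$ as \emph{submatroids}, and such a copy need not be a balanced $K_t$ on the $t$ vertices of your transversal: as in Lemma~\ref{lem: Z2 realizations} (and its analogues over general $\Gamma$), $M(K_t)$ is also realized by gain graphs on $t-1$ vertices using joints/loops (and by a doubled triangle when $t=4$), so ``every transversal of $V_1,\dots,V_t$ hosts $M(K_t)$'' does not hand you one gain choice per cross-edge of a $K_t[R]$, which is what your balancing step presupposes. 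Second, even granting balanced $K_t$'s on all transversals, producing a balanced $K_t[r]$ means finding a potential $f:V\to\Gamma$ with $f(u)^{-1}f(v)$ lying in the gain list of every selected cross-pair $uv$; this is a global consistency condition, different transversals may use incompatible gain choices, and no actual pigeonhole/Ramsey argument is given. There is also a circularity in your supersaturation threshold: for $\chi(H)=4$ and groups with an involution other than $\mathbb{Z}_2$ (e.g.\ $\mathbb{Z}_6$ or $\mathbb{Z}_2^3$), the bound $\pi(\Gamma,M(K_4))\le\tfrac23$ is not supplied by Theorems~\ref{thm intro: excluding large Kt}, \ref{thm intro: Mantel for DG}, and~\ref{thm intro: K4}; in the paper it follows only from Theorem~\ref{thm:gain ESS}, i.e.\ from the very statement you are proving. (Your bipartite base case has a similar soft spot: K\H{o}v\'{a}ri--S\'{o}s--Tur\'{a}n on the underlying multigraph gives a dense bipartite pattern of nonempty lines, not a \emph{balanced} copy of $H$, so a gain-balancing step is needed there too.)

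For comparison, the paper's proof avoids all of this with one local switching argument proving the monotonicity $|\Gamma|^{-1}\ex(Q_n(\Gamma),N)\le|\Gamma'|^{-1}\ex(Q_n(\Gamma'),N)$: process the vertices in order, switch at each vertex so that at least a $|\Gamma'|/|\Gamma|$-fraction of its back-edges get gains in $\Gamma'$, and delete the rest; with $\Gamma'$ trivial this leaves a graphic subgraph with at least a $1/|\Gamma|$-fraction of the edges, and the classical Erd\H{o}s--Stone--Simonovits theorem finishes. That same switching idea is essentially what would be needed to repair your harmonisation step; as written, the proposal leaves the decisive step unproven.
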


Returning to Mantel's theorem for Dowling geometries, we extend this result in a different direction.
Note that $M(K_3) \cong U_{2,3}$, so it is natural to consider the extremal number of long lines $U_{2,\ell}$ with $\ell \ge 4$. We obtain these extremal numbers and identify the extremal matroids.
\begin{theorem}\label{thm intro: long line}
    Let $n \ge 2$ and $\ell \ge 4$. Then
    \begin{equation*}
        \ex(Q_n(\Gamma), U_{2,\ell}) =
            \begin{cases}
                |Q_n(\Gamma)|           & \text{if } \ell \ge |\Gamma|+3, \\
                |Q_n(\Gamma)|-n+1       & \text{if } \ell = |\Gamma|+2, \\
                (\ell-1)\binom{n}{2}    & \text{if } \ell \le |\Gamma|+1.
            \end{cases}
    \end{equation*}
\end{theorem}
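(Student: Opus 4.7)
The plan is to first classify the rank-$2$ flats of $Q_n(\Gamma)$ and then split into three cases according to $\ell$. From the standard vector representation one checks that every rank-$2$ flat of $Q_n(\Gamma)$ has size $2$, $3$, or $|\Gamma|+2$: the lines of size $|\Gamma|+2$ are precisely the $\binom{n}{2}$ \emph{joint-joint lines}, each consisting of a pair of joints together with the $|\Gamma|$ normal points associated with that pair, while the lines of size $3$ are the triangles spanned by two normal points that share a joint. Since $\ell \ge 4$, a set $N \subseteq Q_n(\Gamma)$ contains a $U_{2,\ell}$-restriction if and only if some joint-joint line carries $\ell$ of its points, and in particular this forces $\ell \le |\Gamma|+2$. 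The case $\ell \ge |\Gamma|+3$ is therefore immediate.

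For $\ell \le |\Gamma|+1$, I would double-count incidences between $N$ and the joint-joint lines. Writing $j$ and $p$ for the number of joints and normal points in $N$, each joint lies on $n-1$ joint-joint lines and each normal point on exactly one, while no such line contributes more than $\ell-1$ points of $N$, so $(n-1)j + p \le (\ell-1)\binom{n}{2}$. Rearranging gives $|N| = j+p \le (\ell-1)\binom{n}{2} - (n-2)j \le (\ell-1)\binom{n}{2}$ for all $n\ge 2$. The bound is matched by picking any $\ell-1$ of the $|\Gamma|$ normal points on each joint-joint line, since every joint-joint line then carries exactly $\ell-1$ chosen points and triangles cannot host a $U_{2,\ell}$ because $\ell\ge 4$.

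The boundary case $\ell = |\Gamma|+2$ is handled dually by computing the minimum size of a set $S \subseteq Q_n(\Gamma)$ that meets every joint-joint line. If $S$ contains $k$ joints, then the $\binom{n-k}{2}$ joint-joint lines disjoint from those joints must each be hit by a separate normal point, so $|S| \ge k + \binom{n-k}{2}$; viewed as a function of $k \in \{0,1,\dots,n\}$ this attains its minimum $n-1$ at $k=n-2$ and $k=n-1$. Hence $\ex(Q_n(\Gamma), U_{2,\ell}) \le |Q_n(\Gamma)| - (n-1)$, attained by retaining a single joint together with every normal point. The main obstacle in this proof is really the line classification for $Q_n(\Gamma)$; once that is established, each of the three cases reduces to a short double-counting or hitting-set optimisation.
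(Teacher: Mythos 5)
Your proof is correct and essentially matches the paper's argument: both rest on the classification of rank-$2$ flats of $Q_n(\Gamma)$ (joint-joint lines of size $|\Gamma|+2$, balanced triangles of size $3$, and trivial two-point flats), so that any $U_{2,\ell}$ with $\ell\ge 4$ must lie on a joint-joint line, after which the bounds follow by elementary counting. Where the paper writes $|M|$ explicitly as a function of the number $k$ of retained joints and maximizes over $k$, your incidence double-count for $\ell\le|\Gamma|+1$ and hitting-set minimisation for $\ell=|\Gamma|+2$ carry out algebraically the same optimisation from the dual side, so the two proofs are interchangeable.
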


A \emph{variety} of simple matroids is a class $\mathcal{M}$ of matroids that is closed under simple minors (where we simplify after contraction) and direct sums, for which there exists a sequence $(U_n)_{n=1}^\infty$ of matroids such that $U_n \in \mathcal{M}$ and every rank-$n$ matroid in $\mathcal{M}$ is a submatroid of $U_n$. Such a sequence of matroids is called a \emph{sequence of universal models} for the variety.
Projective geometries over finite fields and Dowling geometries over finite groups are the two main examples of sequences of universal models for varieties of matroids; Kahn and Kung~\cite{KK1982} showed that, apart from those two families, there are only two additional degenerate families of matroid varieties: matchstick and origami matroids.

For the sake of completeness, we include the extremal functions for matchstick and origami matroids in Section~\ref{sec: matchstick and origami}.
Together with the Bose--Burton theorem for projective geometries, Tur\'{a}n's theorem for graphs, and Theorem~\ref{thm intro: excluding subgeometry}, this gives a complete classification of extremal numbers $\ex(U_n,U_m)$ where $U_m \subseteq U_n$ are members of a sequence of universal models for a variety of simple matroids.

\subsection*{Structure of the paper}

The remainder of the paper is structured as follows. In Section~\ref{sec: preliminaries} we review some of the terminology related to matroids, gain graphs, and Dowling geometries. In Section~\ref{sec: long lines and subgeometries} we compute the extremal numbers for long lines and Dowling geometries and prove Theorems~\ref{thm intro: excluding subgeometry} and~\ref{thm intro: long line}. We prove Theorem~\ref{thm intro: Mantel for DG} in Section~\ref{sec: Mantel}. In Section~\ref{sec: graphic}, we consider Tur\'{a}n density for graphic matroids; this is where we prove Theorems~\ref{thm intro: K4}, \ref{thm intro: order 4}, and~\ref{thm intro: gain ESS}. In Section~\ref{sec: excluding large Kt} we consider cliques larger than a triangle and prove Theorem~\ref{thm intro: excluding large Kt}. Finally, in Section~\ref{sec: matchstick and origami} we consider Tur\'{a}n problems for matchstick and origami matroids.

\section{Preliminaries}\label{sec: preliminaries}

\subsection{Matroids}

We assume that the reader is familiar with matroid theory and refer to~\cite{Oxley2011} for any undefined terminology.
We denote the number of elements of a matroid $M$ by $|M|$.
The rank-$2$ uniform matroid $U_{2,n}$ is a \emph{line}; a line is \emph{long} if $n\ge 3$ and is \emph{very long} if $n\ge 4$.
For matroids $M$ and $N$, we say that $N$ is a \emph{submatroid} (or \emph{restriction}) of $M$, denoted $N\subseteq M$, if $N$ can be obtained from $M$ by deleting a set of its elements.
For a subset $X$ of elements of $M$, we denote by $M|X$ the submatroid of $M$ obtained by deleting all elements not in $X$.
A matroid $M$ is \emph{$N$-free} if $M$ has no submatroid isomorphic to~$N$.
An injective map $\psi\colon E(N) \to E(M)$ is called an \emph{embedding} if the submatroid of $M$ restricted to the image of $\psi$ is isomorphic to $N$.

\subsection{Gain graphs and Dowling geometries}

Throughout this paper, $\Gamma$ will always denote a finite group with multiplication as its operation; we use $1$ for the identity element in $\Gamma$.
We use $\mathbb{Z}_k$ to denote the cyclic group on $k$ elements.
Notably, $\trivialgroup$ denotes the trivial group. For groups $\Gamma$ and $\Gamma'$, we use $\Gamma' \le \Gamma$ to indicate that $\Gamma'$ is a subgroup of $\Gamma$.

All graphs in this paper are finite, but allow loops and parallel edges. To avoid trivialities, all graphs will contain at least one edge. 

A \emph{$\Gamma$-gain graph} $(G,\phi)$ is a graph $G$, together with an orientation of the edges of $G$ and a \emph{gain function} $\phi\colon E \rightarrow \Gamma$. Two gain graphs $(G,\phi)$ and $(G',\phi')$ are \emph{isomorphic} if the underlying graphs $G$ and $G'$ are isomorphic and, for each edge $e$ of $G$ and the corresponding edge $e'$ of $G'$, $\phi(e) = \phi'(e')$ if the directions of $e$ and $e'$ agree and $\phi(e) = \phi'(e')^{-1}$ otherwise. We shall often simply refer to $G$ as the gain graph (leaving the gain function $\phi$ implicit) and use $\underline{G}$ for the simplification of the underlying graph.

For a cycle $C$ of $G$, with edge sequence $e_1, e_2, \ldots, e_k$, we define
\begin{equation*}
    \phi(C) = \phi(e_1)^{\sigma_1} \cdot \phi(e_2)^{\sigma_2} \cdot \ldots \cdot \phi(e_k)^{\sigma_k},
\end{equation*}
where $\sigma_i = +1$ if the direction in which $C$ traverses $e_i$ agrees with the orientation of $e_i$, and $\sigma_i = -1$ otherwise. While $\phi(C)$ depends on the starting point and orientation of $C$, whether $\phi(C) = 1$ or not is independent of both. When the cycle $C$ has at least two edges and $\phi(C) = 1$, we call it \emph{balanced}, and \emph{unbalanced} otherwise.

A biased graph $(G,\mathcal{B})$ is a graph together with a subset $\mathcal{B}$ of its cycles that satisfies the theta-property: in any theta-subgraph of $G$, if two of its cycles are in $\mathcal{B}$, then so is the third. The collection of balanced cycles in a gain graph satisfies the theta-property and therefore forms a biased graph (see~\cite{Zaslavsky1989}).

Zaslavsky~\cite{Zaslavsky1991} introduced the \emph{frame matroid} (or bias matroid) $\FM(G,\mathcal{B})$ of a biased graph $(G,\mathcal{B})$ to be the matroid on the edges of $G$ whose set of circuits comprises the edge sets of balanced cycles, two unbalanced cycles sharing a vertex, two vertex-disjoint unbalanced cycles together with a simple path connecting them, and theta-subgraphs all of whose cycles are unbalanced (such subgraphs are called balanced cycles, tight and loose handcuffs, and unbalanced thetas, see Figure~\ref{fig: handcuffs and theta}). Since every gain graph $(G,\phi)$ gives rise to a biased graph, the frame matroid $\FM(G,\phi)$ of the gain graph $(G,\phi)$ is well defined; we shall often simply write $\FM(G)$ for $\FM(G,\phi)$.

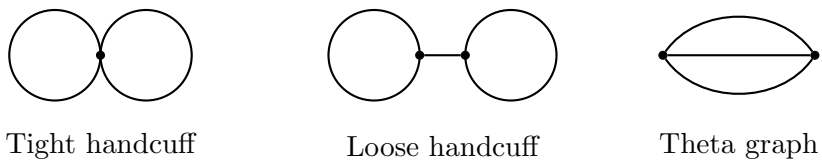
\begin{figure}[h!]
    \centering
    \begin{tikzpicture}
        \tikzstyle{v}=[circle, draw, solid, fill=black, inner sep=0pt, minimum width=3pt]
        \begin{scope}[xshift=0cm]
            \draw[thick] (0,0) circle (0.6);
            \draw[thick] (1.2,0) circle (0.6);
            \node[v] (x) at (0.6,0) {};
            \node () at (0.6,-1.2) {Tight handcuff};
        \end{scope}
        \begin{scope}[xshift=4.2cm]
            \draw[thick] (0,0) circle (0.6);
            \draw[thick] (1.8,0) circle (0.6);
            \node[v] (x) at (0.6,0) {};
            \node[v] (y) at (1.2,0) {};
            \draw[thick] (x) -- (y);
            \node () at (0.9,-1.2) {Loose handcuff};
        \end{scope}
        \begin{scope}[xshift=8cm]
            \node[v] (x) at (0,0) {};
            \node[v] (y) at (2,0) {};
            \draw[thick]
                (x) to[bend left=55](y)
                (x) to[bend left=0](y)
                (x) to[bend left=-55](y);
            \node () at (1,-1.2) {Theta graph};
        \end{scope}
    \end{tikzpicture}
    \caption{The circuits of $\FM(G,\mathcal{B})$ are the cycles in $\mathcal{B}$, or subgraphs of $G$ that are isomorphic to subdivisions of the three graphs above all of whose cycles are not in $\mathcal{B}$.}
    \label{fig: handcuffs and theta}
\end{figure}

The Dowling geometry $Q_n(\Gamma)$ was introduced by Dowling~\cite{Dowling1973a}. In this paper the following definition in terms of gain graphs will be particularly useful. Let $K_n^\Gamma$ be the gain graph on vertex set $[n]$ with, for each pair $i < j$ of vertices and each $x \in \Gamma$, an edge $x_{ij}$ oriented from $i$ to $j$ and labelled $x$; and for each vertex $i$ a loop $b_i$ whose label is independent of $\Gamma$. The Dowling geometry is the frame matroid of the gain graph so constructed, $Q_n(\Gamma) = \FM(K_n^\Gamma)$. The elements $b_i$ are called the \emph{joints} of $Q_n(\Gamma)$; they play an important role in the results in Section~\ref{sec: long lines and subgeometries}, where we discuss them further.

We study the extremal problem for Dowling geometries by reducing it to an equivalent problem on gain graphs. A complicating factor here is that in the construction of the frame matroid of a gain graph, information about gains is lost, and only balance remains. We review an operation on gain graphs that preserves their matroidal structure~\cite[Section~5]{Zaslavsky1989}.

Let $(G,\phi)$ be a $\Gamma$-gain graph, $v\in V(G)$, and $\gamma \in \Gamma$.
We say that a new labeling $\phi'$ is obtained by \emph{switching} $\phi$ by $\gamma$ at $v$ if 
\begin{equation*}
    \phi'(e) = 
    \begin{cases}
        \phi(e) \cdot \gamma & \text{if $v$ is the head of $e$}, \\
        \gamma^{-1} \cdot \phi(e) & \text{if $e$ is the tail of $e$}, \\
        \phi(e) & \text{otherwise}. \\
    \end{cases}
\end{equation*}
Two gain graphs are \emph{switching-equivalent} if they have the same underlying graph and one can be obtained from the other by applying a sequence of switching operations. Switching at a vertex does not change the balance of any cycle in $v$, so if $G$ and $H$ are switching-equivalent gain graphs, then $\FM(G) = \FM(H)$. Gain graphs $(G,\phi)$ and $(G',\phi')$ are \emph{switching-isomorphic} if $(G,\phi)$ is isomorphic to a graph that is switching-equivalent to $(G',\phi')$.

Let $H$ be a gain graph.
A gain graph $G$ has a \emph{balanced $H$-copy} if $G$ has a gain subgraph switching-isomorphic to $H$.
For a set $\mathcal{H}$ of gain graphs, a gain graph $G$ has a \emph{balanced $\mathcal{H}$-copy} if $G$ has a balanced $H$-copy for some $H\in \mathcal{H}$.

\begin{lemma}
    Let $G$ and $H$ be gain graphs.
    \begin{enumerate}[(i)]
        \item If $G$ has a balanced $H$-copy, then $\FM(G)$ has a submatroid isomorphic to $\FM(H)$.
        \item If $\FM(H) \subseteq \FM(G)$ and $H_1,\ldots,H_k$ are the all gain graphs up to switching-isomorphism such that $\FM(H_i) \cong \FM(H)$, then $G$ has a balanced $H_i$-copy for some $i$.
        \qed
    \end{enumerate}
\end{lemma}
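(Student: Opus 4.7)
The plan is to reduce both parts to a single structural observation: for any gain graph $G$ and edge subset $X \subseteq E(G)$, the restriction $\FM(G)|X$ equals the frame matroid of the gain subgraph $G[X]$ obtained by taking $X$ together with its endpoints and the inherited orientation and gain function. The reason is that Zaslavsky's circuit description of $\FM$ is purely local in the edges of the would-be circuit: a balanced cycle of $G$ lying in $X$ is also a balanced cycle of $G[X]$ because gains on $X$ are inherited unchanged (so the product around the cycle is the same); analogously, unbalanced cycles, tight and loose handcuffs, and unbalanced thetas in $G[X]$ are exactly those of $G$ whose edges are contained in~$X$. Together with the standard facts that switching-equivalent gain graphs have identical frame matroids (switching preserves cycle gains up to conjugation) and that isomorphic gain graphs have isomorphic frame matroids, this is everything we need.

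For part (i), suppose $G$ has a balanced $H$-copy, realized by a gain subgraph $G[X]$ that is switching-isomorphic to $H$. Pick a gain graph $H'$ isomorphic to $G[X]$ and switching-equivalent to $H$. Then $\FM(G[X]) \cong \FM(H') = \FM(H)$, and by the observation above, $\FM(G[X]) = \FM(G)|X$ is a submatroid of $\FM(G)$. Hence $\FM(G)$ has a submatroid isomorphic to $\FM(H)$.

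For part (ii), suppose $\FM(H) \subseteq \FM(G)$, witnessed by an edge subset $X \subseteq E(G)$ with $\FM(G)|X \cong \FM(H)$. By the structural observation, $\FM(G[X]) \cong \FM(H)$. Since $H_1,\ldots,H_k$ exhaust the switching-isomorphism classes of gain graphs whose frame matroid is isomorphic to $\FM(H)$, the gain subgraph $G[X]$ must be switching-isomorphic to some $H_i$; this means $G$ has a balanced $H_i$-copy by definition.

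The main thing to get right is the circuit-level verification that $\FM(G)|X = \FM(G[X])$; everything else is bookkeeping. This verification is routine given Zaslavsky's characterization of the circuits of a frame matroid, and amounts to observing that each of the four circuit types (balanced cycle, tight handcuff, loose handcuff, unbalanced theta) is determined by a subgraph together with the balance/unbalance of its cycles, both of which are preserved under edge-restriction with inherited gains.
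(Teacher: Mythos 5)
Your proof is correct, and it is exactly the routine argument the paper has in mind when it states this lemma with a \qed and no proof: restriction of $\FM(G)$ to an edge set $X$ equals $\FM$ of the gain subgraph on $X$ (since Zaslavsky's circuit types and the balance of their cycles are determined by the inherited gains), combined with the facts that switching-equivalence preserves the frame matroid and isomorphism of gain graphs gives isomorphism of frame matroids. Nothing further is needed.
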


\subsection{\texorpdfstring{$\mathbb{Z}_2$}{Z2}-gain-graphic representations of \texorpdfstring{$M(K_t)$}{M(Kt)}}

In this section, we consider the inequivalent frame representations of $M(K_t)$ over $\mathbb{Z}_2$.
For the general characterization of frame representations of graphic matroids, see the work of Chen, DeVos, Funk, and Pivotto~\cite{CDFP2015}.

In the following lemma, we use $K_t$ to denote both the complete graph on $t$ vertices and the $\mathbb{Z}_2$-gain graph whose underlying graph is $K_t$ and all of whose edges are labelled~1. We write $K_{t-1}^{\trivialgroup}$ as $K_{t-1}^1$ for simplicity and regard $K_{t-1}^1$ as the subgraph of $K_{t-1}^{\mathbb{Z}_2}$ obtained by deleting all edges of the form $(-1)_{ij}$. The $\mathbb{Z}_2$-gain graph $H_{t-1}$ is the $\mathbb{Z}_2$-gain graph on $t-1$ vertices such that $E(H_{t-1}) = \{b_1\} \cup \{1_{1i},(-1)_{1i} : 2 \le i \le t-1\} \cup \{1_{ij} : 2 \le i < j \le t-1\}$; we call the unique looped vertex the \emph{center} of $H_{t-1}$. The \emph{doubled triangle} $C_3^{\mathbb{Z}_2}$ is obtained from $K_3^{\mathbb{Z}_2}$ by removing all loops. See Figure~\ref{fig: Z2 Kt} and Figure~\ref{fig:Z2 K4}.

\begin{lemma}\label{lem: Z2 realizations}
    Let $t \ge 3$ and let $G$ be a $\mathbb{Z}_2$-gain graph such that $\FM(G) \cong M(K_t)$. If $t \neq 4$, then $G$ is switching-isomorphic to $K_t$, $K_{t-1}^{1}$, or $H_{t-1}$. If $t = 4$, then $G$ is switching-isomorphic to $K_4$, $K_3^{1}$, $H_3$, or $C_3^{\mathbb{Z}_2}$.
\end{lemma}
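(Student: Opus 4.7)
The plan is a case analysis on whether $G$ contains an unbalanced cycle. Since $\FM(G)\cong M(K_t)$ is simple, connected, of rank $t-1$, and has $\binom{t}{2}$ elements, the gain graph $G$ must be connected, have no balanced loop, no balanced digon, and at most one (necessarily unbalanced) loop per vertex; between any two vertices there are at most two parallel edges (one per gain in $\mathbb{Z}_2$). The rank formula for the frame matroid of a connected gain graph then yields either $|V(G)|=t$ with every cycle of $G$ balanced, or $|V(G)|=t-1$ with $G$ containing at least one unbalanced cycle.

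In the balanced case, $\FM(G)=M(\underline{G})$ coincides with the cycle matroid of the underlying simple graph. Whitney's 2-isomorphism theorem combined with the 3-connectivity of $M(K_t)$ for $t\ge 4$ (the case $t=3$ being handled directly, as $U_{2,3}$ has only $K_3$ as a simple graphic presentation) forces $\underline{G}\cong K_t$; switching along a spanning tree trivialises every gain, so $G$ is switching-isomorphic to $K_t$.

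In the unbalanced case, let $\ell$, $d$, $s$ denote the numbers of unbalanced loops, doubled vertex-pairs, and singly joined vertex-pairs. The identity $\ell+2d+s=\binom{t}{2}$ combined with $\ell\le t-1$ and $d+s\le\binom{t-1}{2}$ gives $\ell+d\ge t-1$. Two constraints collapse the remaining possibilities. First, a doubled pair $\{u,v\}$ with unbalanced loops at both $u$ and $v$ would form a rank-$2$ set of four elements whose every three-element subset is either a tight or a loose handcuff, producing $U_{2,4}\subseteq\FM(G)$; this is impossible because $M(K_t)$ is graphic. Second, each element of $M(K_t)$ lies in exactly $t-2$ triangles, while every triangle of $\FM(G)$ is a balanced $3$-cycle, a tight handcuff (unbalanced loop plus unbalanced digon at a shared vertex), or a loose handcuff (two unbalanced loops joined by a single edge). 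After switching each single edge to gain $1$, combining these two tools should both sharpen $\ell+d$ to $t-1$ and pin down the configuration: $\ell=t-1$ yields $K_{t-1}^1$; $\ell=1$ forces the unique loop to lie at the common vertex of every digon and yields $H_{t-1}$; and $\ell=0$ requires $d=t-1\le\binom{t-1}{2}$ with every pair doubled, which is only feasible for $t=4$ and yields $C_3^{\mathbb{Z}_2}$.

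I expect the main obstacle to be this last step, namely ruling out intermediate values $2\le\ell\le t-2$ and excluding $\ell=0$ when $t\ge 5$. For intermediate $\ell$, a local triangle count at an element simultaneously incident to a loop and to a digon, together with the $U_{2,4}$ exclusion, should produce more than $t-2$ triangles through that element. For $\ell=0$ with $t\ge 5$, one expects to isolate either a $U_{2,4}$ amongst overlapping digons or a balanced-$3$-cycle imbalance that prevents matching the $\binom{t}{3}$ triangles of $M(K_t)$. The classification of frame representations of graphic matroids by Chen, DeVos, Funk, and Pivotto~\cite{CDFP2015} provides an alternative, more systematic route to the same conclusion.
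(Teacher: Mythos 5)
Your setup is sound---the rank dichotomy, the $U_{2,4}$ exclusion, and the balanced case via Whitney's $2$-isomorphism theorem are all fine---but the heart of the lemma is exactly the step you leave open. In the unbalanced case ($|V(G)|=t-1$), the numerical constraints $\ell+2d+s=\binom{t}{2}$, $\ell\le t-1$, $d+s\le\binom{t-1}{2}$ give only $\ell+d\ge t-1$; they do not exclude the intermediate values $2\le\ell\le t-2$, nor $\ell=0$ for $t\ge 5$ (for $t=5$, for instance, the counts admit $d=4$, $s=2$ on four vertices, which must be killed by matroid structure, not arithmetic), nor do they show in the surviving cases that the loops, digons and single edges are arranged as in $K_{t-1}^1$ or $H_{t-1}$ with all the required cycles balanced (e.g.\ for $K_{t-1}^1$ you still must argue every $3$-cycle of non-loop edges is balanced). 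All of this is flagged with ``should'' and ``one expects'', so the classification---which is the entire content of the lemma---is not actually proved. The local triangle count you propose (each element of $M(K_t)$ lies in exactly $t-2$ triangles, and triangles of $\FM(G)$ are balanced $3$-cycles, tight handcuffs, and loose handcuffs) is a plausible tool, but converting it into a complete argument requires a case analysis that is missing from the proposal.

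For comparison, the paper sidesteps the global counting entirely: it fixes a star basis $B$ of $M(K_t)$ (the edges of $K_t$ at one vertex), which has the two properties that every fundamental circuit with respect to $B$ is a triangle and every pair of elements of $B$ spans a triangle, and then analyses how $B$ can sit inside $G$. Since $B$ is independent in $\FM(G)$, its components are trees or unicyclic subgraphs whose cycle is unbalanced; the two triangle properties then force $B$ to be an unbalanced $3$-cycle (so $t=4$ and $G$ is switching-isomorphic to $C_3^{\mathbb{Z}_2}$), an unbalanced digon whose vertex meets every other basis element (giving $H_{t-1}$), or a configuration of loops and a star (giving $K_t$, $H_{t-1}$, or $K_{t-1}^1$). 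If you wish to keep your structure, you must supply the missing case analysis---or genuinely carry out the appeal to the Chen--DeVos--Funk--Pivotto classification rather than merely mention it; as written, the proposal is an outline whose decisive step is unproved.
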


\begin{proof}
    Let $G$ be a $\mathbb{Z}_2$-gain graph such that $\FM(G) = M(K_t)$. Let $B$ be a basis of $M(K_t)$ such that every fundamental circuit with respect to $B$ is a triangle, and every pair of elements of $B$ spans a triangle. In $G$, the components of $B$ are trees or unicyclic graphs with an unbalanced cycle. Any unbalanced cycle in $B$ can have at most three edges. If $B$ contains an unbalanced cycle with exactly three edges, then $B$ cannot contain any other elements; in this case $t=4$ and $G$ is switching-isomorphic to $C_3^{\mathbb{Z}_2}$. If $B$ contains a cycle with exactly two edges, then that cycle has a vertex that is incident with every other element of $B$; in this case $G$ is switching-isomorphic to $H_{t-1}$. We may therefore assume that the only cycles $B$ forms in $G$ are loops, in which case $B$ is a $t$-star, a $(t-1)$-star with a loop attached to the center, or $t$ loops. In these cases, $G$ is switching-isomorphic to, respectively, $K_t$, $H_{t-1}$, and $K_{t-1}^{1}$.
\end{proof}

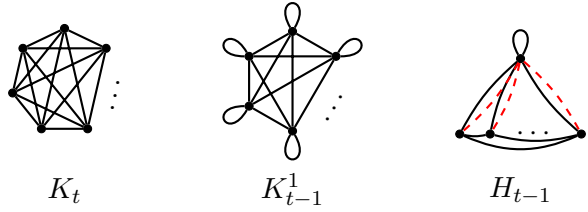
\begin{figure}[htb]
    \centering
    \begin{tikzpicture}
        \begin{scope}[]
            \node[v] (v0) at (90+0*360/7:0.7) {};
            \node[v] (v1) at (90+1*360/7:0.7) {};
            \node[v] (v2) at (90+2*360/7:0.7) {};
            \node[v] (v3) at (90+3*360/7:0.7) {};
            \node[v] (v4) at (90+4*360/7:0.7) {};

            \node (v5) at (90+5*360/7:0.7) {\rotatebox{78}{$\cdots$}};

            \node[v] (vn) at (90+6*360/7:0.7) {};

            \draw[balanced edge]
                (v0)--(v1) (v0)--(v2) (v0)--(v3) (v0)--(v4) (v0)--(vn)
                (v1)--(v2) (v1)--(v3) (v1)--(v4) (v1)--(vn)
                (v2)--(v3) (v2)--(v4) (v2)--(vn)
                (v3)--(v4) (v3)--(vn)
                (v4)--(vn);

            \node () at (0,-1.45) {$K_t$};
        \end{scope}
        
        \begin{scope}[xshift=3cm, yshift=0cm]
            \node[v] (v0) at (90+0*360/6:0.66) {};
            \node[v] (v1) at (90+1*360/6:0.66) {};
            \node[v] (v2) at (90+2*360/6:0.66) {};
            \node[v] (v3) at (90+3*360/6:0.66) {};

            \node (v5) at (90+4*360/6:0.66) {\rotatebox{60}{$\cdots$}};

            \node[v] (vn) at (90+5*360/6:0.66) {};

            \draw[balanced edge]
                (v0)--(v1) (v0)--(v2) (v0)--(v3) (v0)--(vn)
                (v1)--(v2) (v1)--(v3) (v1)--(vn)
                (v2)--(v3) (v2)--(vn)
                (v3)--(vn)
                (v0) to[in=60,out=120,loop] ()
                (v1) to[in=120,out=180,loop] ()
                (v2) to[in=180,out=240,loop] ()
                (v3) to[in=240,out=300,loop] ()
                (vn) to[in=0,out=60,loop] ();

            \node () at (0,-1.45) {$K_{t-1}^1$};
        \end{scope}

        \begin{scope}[xshift=6cm, yshift=-0.7cm]
            \node[v] (v0) at (0,1) {};
            \node[v] (v1) at (-0.8,0) {};
            \node[v] (v2) at (-0.4,0) {};
            \node[v] (vn) at (0.8,0) {};

            \node (v3) at (0.2,0) {$\cdots$};

            \draw[balanced edge]
                (v0) to[in=60,out=120,loop] ()
                (v0)to[bend right=10](v1)
                (v0)to[bend right=10](v2)
                (v0)to[bend right=10](vn)
                (v1)to[bend right=12](v2)
                (v1)to[bend right=24](vn)
                (v2)to[bend right=14](vn);
            \draw[unbalanced edge]
                (v0)to[bend left=10](v1)
                (v0)to[bend left=10](v2)
                (v0)to[bend left=10](vn);

            \node () at (0,-0.75) {$H_{t-1}$};
        \end{scope}
    \end{tikzpicture}
    \caption{The three non-switching-isomorphic $\mathbb{Z}_2$-gain graphs that realize $M(K_t)$ when $t \neq 4$. The solid black edges are labelled~$1$ and the dashed red edges are labelled~$-1$.}
    \label{fig: Z2 Kt}
\end{figure}

\begin{figure}[htb]
    \centering
    \begin{tikzpicture}
        \tikzstyle{v}=[circle, draw, solid, fill=black, inner sep=0pt, minimum width=3pt]
        \begin{scope}[]
            \node[v] (v1) at (-45:0.7) {};
            \node[v] (v2) at (45:0.7) {};
            \node[v] (v3) at (135:0.7) {};
            \node[v] (v4) at (225:0.7) {};

            \draw[balanced edge]
                (v1)--(v2) (v1)--(v3) (v1)--(v4) (v2)--(v3) (v2)--(v4) (v3)--(v4);

            \node () at (0,-1.1) {$K_4$};
        \end{scope}
        
        \begin{scope}[xshift=2.5cm, yshift=-0.2cm]
            \node[v] (v1) at (-30:0.6) {};
            \node[v] (v2) at (90:0.6) {};
            \node[v] (v3) at (210:0.6) {};

            \draw[balanced edge]
                (v1)--(v2)--(v3)--(v1)
                (v1) to[in=-60,out=0,loop] ()
                (v2) to[in=60,out=120,loop] ()
                (v3) to[in=180,out=240,loop] ();

            \node () at (0,-0.9) {$K_3^1$};
        \end{scope}

        \begin{scope}[xshift=5.0cm, yshift=-0.2cm]
            \node[v] (v1) at (-30:0.6) {};
            \node[v] (v2) at (90:0.6) {};
            \node[v] (v3) at (210:0.6) {};

            \draw[balanced edge]
                (v1)to[bend left=25](v2) (v2)to[bend right=25](v3) (v3)--(v1);
            \draw[unbalanced edge]
                (v1)to[bend right=25](v2) (v2)to[bend left=25](v3);
            \draw[balanced edge] (v2) to[in=60,out=120,loop] ();

            \node () at (0,-0.9) {$H_3$};
        \end{scope}

        \begin{scope}[xshift=7.5cm, yshift=-0.2cm]
            \node[v] (v1) at (-30:0.6) {};
            \node[v] (v2) at (90:0.6) {};
            \node[v] (v3) at (210:0.6) {};

            \draw[balanced edge]
                (v1)to[bend right=25](v2) (v2)to[bend right=25](v3) (v3)to[bend right=25](v1);
            \draw[unbalanced edge]
                (v1)to[bend left=25](v2) (v2)to[bend left=25](v3) (v3)to[bend left=25](v1);
            
            \node () at (0,-0.9) {$C_3^2$};
        \end{scope}
    \end{tikzpicture}
    \caption{The four non-switching-isomorphic $\mathbb{Z}_2$-gain graphs realizing $M(K_4)$. The solid black edges are labelled~$1$ and the dashed red edges are labelled~$-1$.}
    \label{fig:Z2 K4}
\end{figure}
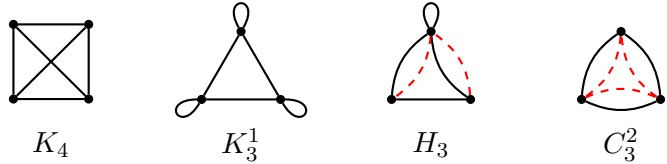

\section{Excluding very long lines and Dowling geometries over nontrivial subgroups}\label{sec: long lines and subgeometries}

In this section we prove Theorems~\ref{thm intro: excluding subgeometry} and~\ref{thm intro: long line}.

\subsection{Joints and very long lines}

Recall that the definition of $Q_n(\Gamma) = \FM(K_n^\Gamma)$ specifies $n$ \emph{joint} elements that correspond to the loops $b_1,\ldots,b_n$ in $K_n^\Gamma$.
We write $J(Q_n(\Gamma))$ for the set of joint elements of $Q_n(\Gamma)$.

We refer to lines with at least four elements as \emph{very long lines}. Very long lines in $Q_n(\Gamma)$ are necessarily spanned by a pair of joint elements. Conversely, when $n \ge 3$ and $\Gamma$ is nontrivial, the joints are the elements that are on at least two very long lines.

The following lemma asserts that if a nontrivial Dowling geometry is embedded in a larger Dowling geometry, then the joints of the smaller geometry must coincide with the joints of the larger geometry. Its proof is immediate from the observation that if $N$ is a submatroid of $M$, and $e$ is a point of $N$ that is contained in two very long lines of $N$, then $e$ must be contained in two very long lines of $M$.

\begin{lemma}\label{lemma:large-subgroup-embedding}
    Let $n \ge t \ge 3$ and let $\Gamma'$ is a nontrivial subgroup of $\Gamma$. If $\psi\colon E(Q_t(\Gamma'))\to E(Q_n(\Gamma))$ is an embedding, then $\psi(J(Q_t(\Gamma'))) \subseteq J(Q_n(\Gamma))$.
    \qed
\end{lemma}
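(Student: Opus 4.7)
The plan is to use the characterization of joints stated just before the lemma: when $n \ge 3$ and $\Gamma$ is nontrivial, the joints of $Q_n(\Gamma)$ are exactly the elements lying on at least two very long lines. The first step is to verify this characterization briefly. In $Q_n(\Gamma)$ the very long lines are precisely the flats $\cl(\{b_i, b_j\}) = \{b_i, b_j\} \cup \{x_{ij} : x \in \Gamma\}$ for distinct $i,j$, since the loop pair $\{b_i,b_j\}$ together with any $x_{ij}$ forms a loose handcuff; these flats have $|\Gamma|+2 \ge 4$ elements. Every other line is spanned either by a joint and an edge incident to a disjoint pair of vertices (giving a $2$-element flat) or by two edges sharing at most one endpoint (giving a triangle), so it has at most three elements. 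Hence every edge lies on exactly one very long line, while each joint $b_i$ lies on $n-1 \ge 2$ very long lines.

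Applied inside $Q_t(\Gamma')$, the same reasoning shows that every joint $b_i$ of $Q_t(\Gamma')$ lies on at least $t-1 \ge 2$ distinct very long lines of $Q_t(\Gamma')$. I would then observe that very long lines are preserved by embeddings in the following sense: if $L$ is a very long line of $Q_t(\Gamma')$, then $\psi(L)$ is a rank-$2$ subset of $Q_n(\Gamma)$ of size at least four, and therefore lies on some very long line $L^\ast$ of $Q_n(\Gamma)$ passing through $\psi(b_i)$.

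The only subtle point, and the main (minor) obstacle, is to show that two distinct very long lines $L_1, L_2$ through $b_i$ in $Q_t(\Gamma')$ yield two distinct very long lines $L_1^\ast, L_2^\ast$ through $\psi(b_i)$ in $Q_n(\Gamma)$. I would handle this by contradiction: pick joints $b_j \in L_1 \setminus \{b_i\}$ and $b_k \in L_2 \setminus \{b_i\}$, which are not collinear with $b_i$ in $Q_t(\Gamma')$ because $L_1 \neq L_2$. If $L_1^\ast = L_2^\ast$, then $\psi(b_i), \psi(b_j), \psi(b_k)$ would be collinear in $Q_n(\Gamma)$, contradicting that $\psi$ restricts to an isomorphism of matroids on its image. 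So $\psi(b_i)$ lies on at least two very long lines of $Q_n(\Gamma)$, and the characterization above places it in $J(Q_n(\Gamma))$.
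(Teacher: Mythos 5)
Your proposal is correct and follows essentially the same route as the paper: the paper's (omitted, ``immediate'') proof rests on exactly the two facts you verify, namely that in a Dowling geometry over a nontrivial group with rank at least $3$ the joints are precisely the elements lying on two or more very long lines, and that an element lying on two very long lines of a restriction lies on two very long lines of the ambient matroid. Your extra care in checking that the two ambient lines $L_1^\ast,L_2^\ast$ are distinct (via the rank of three joints) is a detail the paper glosses over, but it is the same argument in substance.
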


\subsection{Dowling geometries over nontrivial subgroups: Proof of Theorem~\ref{thm intro: excluding subgeometry}}

We prove Theorem~\ref{thm intro: excluding subgeometry} and identify the extremal matroids.

\begin{theorem}\label{thm: excluding subgeometry}
    Let $n \ge t \ge 3$ and let $\Gamma'$ be a nontrivial subgroup of $\Gamma$. Then
    \begin{equation*}
        \ex(Q_n(\Gamma), Q_t(\Gamma')) = |Q_n(\Gamma)| - n + t - 1.
    \end{equation*}
    If $\Gamma'=\Gamma$, the extremal matroids are $Q_n(\Gamma) \setminus X$ where $X$ is an $(n-t+1)$-element set of $Q_n(\Gamma)$ such that (1) for any joint $x\in X$ and non-joint $y\in X$, $x$ is not on the very long line containing $y$; and (2) for any distinct non-joints $x,y\in X$, the two very long lines containing $x$ and $y$ do not intersect.
    If $\Gamma'$ is a proper nontrivial subgroup of $\Gamma$, then the extremal matroids are $Q_n(\Gamma) \setminus X$ where $X$ is an $(n-t+1)$-element subset of $J(Q_n(\Gamma))$.
\end{theorem}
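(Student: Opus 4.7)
The plan is to prove the extremal value via matching upper and lower bounds, and then separately characterize the extremal sets in the two cases.

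For the upper bound, I will show the stronger statement that any $X \subseteq E(Q_n(\Gamma))$ with $|X| \le n - t$ satisfies $Q_t(\Gamma) \subseteq Q_n(\Gamma) \setminus X$, and hence $Q_t(\Gamma') \subseteq Q_n(\Gamma) \setminus X$ as well. Write $A := \{i : b_i \in X\}$ and let $H$ be the simple graph on vertex set $[n] \setminus A$ with edge $\{i,j\}$ whenever some non-joint $g_{ij} \in X$. Any $T \subseteq [n] \setminus A$ of size $t$ that is independent in $H$ yields a copy of $Q_t(\Gamma)$ in $Q_n(\Gamma) \setminus X$ supported on the joints $\{b_i : i \in T\}$, and such a $T$ exists because
\begin{equation*}
    \alpha(H) \ge |V(H)| - |E(H)| \ge (n - |A|) - (|X| - |A|) = n - |X| \ge t.
\end{equation*}
For the matching lower bound, take $X = \{b_1, \ldots, b_{n-t+1}\}$: by Lemma~\ref{lemma:large-subgroup-embedding}, any copy of $Q_t(\Gamma')$ in $Q_n(\Gamma) \setminus X$ would have its joints among the $t-1$ surviving joints of $Q_n(\Gamma)$, which is impossible.

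For the characterization when $\Gamma' = \Gamma$, assume $X$ is valid with $|X| = n - t + 1$. Validity requires $\alpha(H) \le t - 1$, and the chain $t - 1 \ge \alpha(H) \ge (n - |A|) - |E(H)| \ge (n - |A|) - (|X| - |A|) = t - 1$ therefore forces every inequality to be an equality. The equality $|E(H)| = |X| - |A|$ means every non-joint of $X$ contributes a distinct edge of $H$ with both endpoints in $[n] \setminus A$, matching condition~(1) and forcing pairwise distinct pairs among the non-joints; the equality $\alpha(H) = |V(H)| - |E(H)|$ holds precisely when $H$ is a matching plus isolated vertices, which gives condition~(2). Conversely, any $X$ of size $n - t + 1$ satisfying~(1) and~(2) produces such an $H$ and is therefore valid.

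For the characterization when $\Gamma'$ is a proper nontrivial subgroup of $\Gamma$, suppose for contradiction that $X$ is extremal and contains a non-joint element $g_{ij}$. Applying the upper bound to $X' := X \setminus \{g_{ij}\}$ of size $n-t$ produces a copy $N$ of $Q_t(\Gamma)$ in $Q_n(\Gamma) \setminus X'$ on some $t$-set of joints $T$. If $\{i,j\} \not\subseteq T$, then $g_{ij} \notin N$, so $Q_t(\Gamma') \subseteq N \subseteq Q_n(\Gamma) \setminus X$, contradicting validity. If $\{i,j\} \subseteq T$, I instead find a $Q_t(\Gamma')$-submatroid of $N$ avoiding $g_{ij}$: within $N \cong \FM(K_t^\Gamma)$ on $T$, each $Q_t(\Gamma')$-submatroid is determined by a coboundary-type assignment of one left coset of $\Gamma'$ in $\Gamma$ to each pair of joints, and vertex-switching at $b_i$ by a parameter $\gamma \in \Gamma$ translates the coset assigned to $\{i,j\}$ to an arbitrary left coset while yielding another $Q_t(\Gamma')$-submatroid on the same joint set $T$. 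Since $\Gamma'$ is proper, some coset avoids $g$, producing a $Q_t(\Gamma')$-submatroid in $N \setminus \{g_{ij}\} \subseteq Q_n(\Gamma) \setminus X$ that contradicts validity. The main obstacle I anticipate is the switching step — specifically, cleanly justifying that vertex-switching in $K_t^\Gamma$ freely realizes arbitrary cosets at $\{i,j\}$ while keeping the result a valid $Q_t(\Gamma')$-submatroid — which reduces to a careful description of switching-equivalence for $\Gamma'$-subgraphs of $K_t^\Gamma$.
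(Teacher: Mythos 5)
Your proof is correct and follows essentially the same strategy as the paper: Lemma~\ref{lemma:large-subgroup-embedding} for the lower bound, a counting argument to show any set of size $\le n-t$ leaves a $Q_t(\Gamma)$-subgeometry on some $t$ surviving joints, and the switching argument to handle a non-joint deletion when $\Gamma' < \Gamma$. The one genuine reformulation is packaging the upper bound and the $(1)$/$(2)$ characterization through the auxiliary graph $H$ and the elementary bound $\alpha(H) \ge |V(H)| - |E(H)|$, where the paper instead tracks "associated joints" $e_j$ chosen to minimize $|\{e_1,\dots,e_{n-t+1}\}|$ and applies pigeonhole; your version makes the tightness analysis (matching plus isolated vertices) slightly cleaner, but it is the same underlying idea. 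On the step you flagged as a potential obstacle (realizing an arbitrary coset at $\{i,j\}$ by switching): your worry is unfounded, since switching $K_t^{\Gamma'} \subseteq K_t^\Gamma$ at the single vertex $j$ by $\gamma$ sends the gain set at $\{i,j\}$ to $\Gamma'\gamma$ (right cosets, not left, in the standard convention, but that is immaterial) while still yielding a gain subgraph switching-isomorphic to $K_t^{\Gamma'}$; choosing $\gamma \notin \Gamma' g$ then avoids $g_{ij}$, which is exactly the move the paper makes in the $n=t$ base case.
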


\begin{proof}
    We first show that the given matroids are $Q_t(\Gamma')$-free, and hence that $\ex(Q_n(\Gamma), Q_t(\Gamma')) \ge |Q_n(\Gamma)| - n + t - 1$.
    Let $X$ be an $(n-t+1)$-element subset of $J(Q_n(\Gamma))$; by Lemma~\ref{lemma:large-subgroup-embedding}, the matroid $Q_n(\Gamma)\backslash X$ is $Q_t(\Gamma')$-free.
    Now let $\Gamma' = \Gamma$ and let $X \subseteq E(Q_n(\Gamma))$ be an $(n-t+1)$-element set satisfying (1) and~(2). We show that $Q_n(\Gamma)\backslash X$ is $Q_t(\Gamma)$-free.
    Suppose that $X = \{e_1,\ldots,e_k,f_{k+1},\ldots,f_{n-t+1}\}$ where $e_i \in J(Q_n(\Gamma))$ for $i \in \{1,\ldots,k\}$ and $f_j \not\in J(Q_n(\Gamma))$ for $j \in \{k+1,\ldots,n-t+1\}$.
    For $j \in \{k+1,\ldots,n-t+1\}$, let $e_j$ and $e'_j$ be the two joints on the unique very long line containing $f_j$, and let $X' = \{e_1, \ldots, e_k, e_{k+1}, e_{k+1}', \ldots,e_{n-t+1}, e_{n-t+1}'\}$. By~(1) and~(2), all elements of $X'$ are distinct.
    Note that $X'$ contains $n-t-k+1$ elements of the form $e'_j$ for $j\in\{k+1,\ldots, n-t+1\}$ to give a total of $2n-2t-k+2$ elements.
    Let $T \subseteq E(Q_n(\Gamma))$ be such that $Q_n(\Gamma)|T \cong Q_t(\Gamma')$. By Lemma~\ref{lemma:large-subgroup-embedding}, the joints of $Q_n(\Gamma)|T$ are joints of $Q_n(\Gamma)$, so as $|J(Q_n(\Gamma)|T)|+|X'|=2n-t-k+2$ and $|J(Q_n(\Gamma))|=n$, we have $|J(Q_n(\Gamma)|T)\cap X'|\geq n-t-k+2$.
    By the pigeonhole principle, $J(Q_n(\Gamma)|T)$ contains an element $e_i$ with $i \in \{1,\ldots,k\}$ or a pair $\{e_j,e'_j\}$ with $j \in \{k+1,\ldots,n-t+1\}$. In the former case, $T$ contains $e_i$, and in the latter case, $T$ contains $f_j$. In either case, $T$ contains an element of $X$, so $Q_n(\Gamma)|T$ is not contained in $Q_n(\Gamma)\backslash X$.

    In order to show that no larger $Q_t(\Gamma')$-free restriction of $Q_n(\Gamma)$ exists, let $X$ be an $(n-t)$-element subset of $E(Q_n(\Gamma))$, say $X = \{e_1, \ldots, e_k, f_{k+1}, \ldots, f_{n-t}\}$ where $e_i \in J(Q_n(\Gamma))$ for $i \in \{1,\ldots,k\}$ and $f_j \not\in J(Q_n(\Gamma))$ for $j\in\{k+1,\ldots,n-t\}$.
    For $j \in \{k+1, \ldots, n-t\}$, the element $f_j$ is in a unique very long line of $Q_n(\Gamma)$; let $e_j$ be one of the two joints on this line (we allow $e_i = e_j$ even when $i \neq j$).
    Let $S$ be a $t$-element subset of $J(Q_n(\Gamma)) \setminus \{e_1, \ldots, e_{n-t}\}$. Note that $\cl(S)$ contains only very long lines that are spanned by pairs of elements of $S$, so $\cl(S) \cap X = \emptyset$. Thus, 
    $Q_n(\Gamma)\backslash X\supseteq Q_n(\Gamma)|\cl(S)\cong Q_t(\Gamma)\supseteq Q_t(\Gamma')$.
    This gives us the upper bound $\ex(Q_n(\Gamma), Q_t(\Gamma')) \leq |Q_n(\Gamma)| - n + t - 1$.

    Finally, we show that the extremal matroids are as stated. Let $X$ be an $(n-t+1)$-element subset of $E(Q_n(\Gamma))$ such that $Q_n(\Gamma)\setminus X$ is $Q_t(\Gamma')$-free. As before, suppose that $X = \{e_1,\ldots,e_k, f_{k+1}, \ldots, f_{n-t+1}\}$ where $e_i \in J(Q_n(\Gamma))$ for $i\in\{1,\ldots,k\}$ and $f_j \not\in J(Q_n(\Gamma))$ for  $j\in\{k+1,\ldots,n-t+1\}$. Choose $\{e_{k+1}, \ldots, e_{n-t+1}\}$, where $e_j$ is one of the joints on the very long line containing $f_j$, for all $j=k+1,\ldots,n-t+1$, such that $|\{e_1, ..., e_{n-t+1}\}|$ is as small as possible among all such sets.

    If $\Gamma' = \Gamma$ and (1) and~(2) do not both hold, then $e_i = e_j$ for some $i \neq j$, so there exists a $t$-element subset $S$ of $J(Q_n(\Gamma)) \setminus \{e_1, \ldots, e_{n-t+1}\}$. Then $Q_n(\Gamma)\setminus X \supseteq Q_n(\Gamma)|\cl(S) \cong Q_t(\Gamma)$, so $Q_n(\Gamma)\setminus X$ has a restriction isomorphic to $Q_t(\Gamma')$.

    If $\Gamma' < \Gamma$, we may assume that $X$ contains a non-joint element (so $k \le n-t$). If $n=t$, then $X = \{f_1\}$. By switching so that $f_1$ corresponds to an edge with gain in $\Gamma\setminus\Gamma'$, we have that $Q_t(\Gamma)\setminus f_1$ contains $Q_t(\Gamma')$ as a submatroid. So we may assume that $n > t$. Let $S$ be a $t$-element subset of $J(Q_n(\Gamma)) \setminus \{e_1, \ldots, e_{n-t}\}$. Then $Q_n(\Gamma)\setminus (X\setminus \{f_{n-t+1}\}) \supseteq Q_n(\Gamma)|\cl(S) \cong Q_t(\Gamma)$. Since $f_{n-t+1}$ is not a joint of $Q_n(\Gamma)|\cl(S)$, it follows that $Q_n(\Gamma)\setminus X$ contains $Q_t(\Gamma')$ as a submatroid.
\end{proof}

\subsection{Very long lines: Proof of Theorem~\ref{thm intro: long line}}

We prove Theorem~\ref{thm intro: long line} and identify the extremal matroids.

\begin{theorem}\label{thm: long line}
    Let $n \ge 2$ and $\ell \ge 4$.

    \begin{enumerate}[(i)]
        \item
        If $\ell \ge |\Gamma|+3$, then $\ex(Q_n(\Gamma),U_{2,\ell}) = |Q_n(\Gamma)|$.

        \item 
        If $\ell = |\Gamma|+2$, then $\ex(Q_n(\Gamma), U_{2,\ell}) = |Q_n(\Gamma)|-n+1$; the extremal matroids are 
        \begin{itemize}
            \item
            $Q_n(\Gamma)\backslash X$, where $X$ is an $(n-1)$-subset of $J(Q_n(\Gamma))$, and

            \item 
            $Q_n(\Gamma)\backslash (X \cup \{f\})$, where $X$ is an $(n-2)$-subset of $J(Q_n(\Gamma))$ and $f \in \cl(J(Q_n(\Gamma))\setminus X)\setminus J(Q_n(\Gamma))$.
        \end{itemize}

        \item 
        If $\ell \le |\Gamma|+1$, then $\ex(Q_n(\Gamma), U_{2,\ell}) = (\ell-1)\binom{n}{2}$; the extremal matroids are $Q_n(\Gamma)|X$, where $X \subseteq E(Q_n(\Gamma)) \setminus J(Q_n(\Gamma))$ contains exactly $\ell-1$ elements from each of the very long lines of $Q_n(\Gamma)$.
    \end{enumerate}
\end{theorem}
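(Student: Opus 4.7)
The plan is to treat the three regimes for $\ell$ separately, all driven by a single structural observation: every very long line of $Q_n(\Gamma)$ is spanned by two joints and contains exactly $|\Gamma|+2$ elements (the two joint loops together with the $|\Gamma|$ edges parallel between the corresponding pair of vertices). Consequently, each non-joint element lies on exactly one very long line, while each joint lies on exactly $n-1$ very long lines, and any line with at least $4$ elements is very long. Case (i) is then immediate: for $\ell \ge |\Gamma|+3$ no line of $Q_n(\Gamma)$ has $\ell$ points, so $Q_n(\Gamma)$ itself is $U_{2,\ell}$-free.

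For case (ii), where $\ell = |\Gamma|+2$ equals the size of every very long line, $Q_n(\Gamma)\setminus X$ is $U_{2,\ell}$-free iff $X$ meets every very long line. I would split $X = A \sqcup B$ with $A = X \cap J(Q_n(\Gamma))$ of size $k$ and $B = X \setminus J(Q_n(\Gamma))$. The very long lines not already hit by $A$ are exactly the $\binom{n-k}{2}$ lines between pairs of joints in $J(Q_n(\Gamma))\setminus A$, and since each non-joint covers a unique line, these must all be hit by distinct elements of $B$, giving $|X| \ge k + \binom{n-k}{2}$. A short calculation shows that this quantity is minimized at $k \in \{n-1,n-2\}$ with minimum value $n-1$. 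Unpacking the two equality cases yields the two extremal families: either $k=n-1$ and $X$ consists of $n-1$ joints, or $k=n-2$ and $X$ consists of $n-2$ joints plus a single non-joint $f$ that must hit the unique surviving very long line, which translates to $f \in \cl(J(Q_n(\Gamma))\setminus A) \setminus J(Q_n(\Gamma))$.

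For case (iii), where $\ell \le |\Gamma|+1$, the observation above implies that $Q_n(\Gamma)|Y$ is $U_{2,\ell}$-free iff $|Y \cap L| \le \ell-1$ for every very long line $L$. Double-counting incidences between $Y$ and the very long lines gives
\begin{equation*}
    (\ell-1)\binom{n}{2} \ge \sum_L |Y \cap L| = |Y \setminus J(Q_n(\Gamma))| + (n-1)\,|Y \cap J(Q_n(\Gamma))|,
\end{equation*}
which rearranges to $|Y| \le (\ell-1)\binom{n}{2} - (n-2)\,|Y \cap J(Q_n(\Gamma))|$. This is maximized by taking $Y$ disjoint from the joints, and equality throughout forces $Y$ to pick exactly $\ell-1$ elements from each very long line, matching the claimed extremal description (the hypothesis $\ell \le |\Gamma|+1$ ensures that $\ell-1$ non-joints are actually available on every very long line).

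The main obstacle I anticipate is the extremal characterization in case (ii): once the numerical lower bound $k+\binom{n-k}{2}\ge n-1$ is established, translating equality into the precise geometric statement — in particular, extracting the closure condition on the deleted non-joint in the $k=n-2$ family from the bare hitting-set requirement — requires a careful use of the fact that non-joints cover a unique line each, and of the uniqueness of the surviving long line between the two retained joints.
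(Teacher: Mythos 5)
Your proposal is correct and takes essentially the same approach as the paper: both hinge on the observation that the very long lines of $Q_n(\Gamma)$ are exactly those spanned by pairs of joints and have $|\Gamma|+2$ points, and then count by separating joints from non-joints. The paper phrases the bound in (ii) and (iii) as a direct count of the retained matroid $|M|$ grouped by the joint-status of line endpoints, while you count the complementary deleted set as a hitting set of the very long lines in (ii) and recast (iii) as a point--line incidence double count; these are algebraically the same computations (your $k$ is $n$ minus the paper's $k$), so the reframing is cosmetic rather than a genuinely different route.
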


\begin{proof}
    The first statement is immediate, as lines in $Q_n(\Gamma)$ have length at most $|\Gamma|+2$.

    Let $M \subseteq Q_n(\Gamma)$ be a $U_{2,\ell}$-free submatroid with the maximum number of elements. Let $K = E(M) \cap J(Q_n(\Gamma))$ and let $k = |K|$.

    If $\ell = |\Gamma|+2$, then, for any distinct joints $x, y \in J(Q_n(\Gamma))$,
    \[
        |( \cl_{Q_n(\Gamma)}(\{x,y\}) \setminus \{x,y\} ) \cap E(M)|
        =
        \begin{cases}
            |\Gamma|-1 & \text{if $x,y\in K$}, \\
            |\Gamma| & \text{otherwise}.
        \end{cases}
    \]
    As $\ell \ge 4$, it follows that 
    \begin{equation*}
        |M| 
        =
        k + \binom{k}{2}(|\Gamma|-1) + k(n-k)|\Gamma| + \binom{n-k}{2}|\Gamma| = |\Gamma|\binom{n}{2} + k - \binom{k}{2}.
    \end{equation*}
    The right-hand side is maximised for $k = 1$ and $k=2$, in which case $k-\binom{k}{2} = 1$. When $k=1$, $M$ must be an extremal matroid of the first type, and when $k=2$, $M$ must be an extremal matroid of the second type.

    Finally, assume that $4 \le \ell \le |\Gamma|+1$. As $\ell \ge 4$, it follows that 
    \begin{equation*}
        |M| = k + \binom{k}{2}(\ell-3) + k(n-k)(\ell-2) + \binom{n-k}{2}(\ell-1) = (\ell-1)\binom{n}{2} - k(n-2),
    \end{equation*}
    and so $\ex(Q_n(\Gamma),U_{2,\ell}) \le (\ell-1)\binom{n}{2}$. If equality holds, we must have $k=0$, and $M$ must contain exactly $\ell-1$ elements from each of the $\binom{n}{2}$ very long lines of $Q_n(\Gamma)$.
\end{proof}

\section{Mantel's theorem for Dowling geometries}\label{sec: Mantel}

In this section, we prove Theorem~\ref{thm intro: Mantel for DG}, which is obtained by combining Theorems~\ref{thm: Mantel}--\ref{thm: Mantel Z3} and~\ref{thm: Mantel Gamma} below.

When the group $\Gamma$ is trivial, $Q_n(\Gamma) \cong M(K_{n+1})$ and the Tur\'{a}n problem for triangles is just Mantel's theorem~\cite{Mantel1910} in disguise.
\begin{theorem}\label{thm: Mantel}
    $\ex(Q_{n-1}(\trivialgroup), M(K_3)) = \lfloor n^2/4\rfloor$. The extremal matroids are isomorphic to the cycle matroid of a balanced complete bipartite graph $K_{\lceil n/2 \rceil, \lfloor n/2 \rfloor}$. \qed
\end{theorem}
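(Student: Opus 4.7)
The plan is to translate the problem to extremal graph theory via the canonical isomorphism $Q_{n-1}(\trivialgroup) \cong M(K_n)$ and then invoke Mantel's theorem~\cite{Mantel1910} directly.

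First, I would make the isomorphism explicit. When $\Gamma$ is trivial, every cycle of length at least two in the gain graph $K_{n-1}^{\trivialgroup}$ has gain equal to $1$ and is therefore balanced, so the frame matroid is just the cycle matroid of the underlying multigraph. That multigraph is obtained from $K_{n-1}$ by replacing each loop $b_i$ with a pendant edge from vertex $i$ to a new apex vertex, which is precisely $K_n$. Hence $Q_{n-1}(\trivialgroup)$ is isomorphic to $M(K_n)$, with the joints corresponding to the edges incident to the apex vertex.

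Next, I would observe that in the cycle matroid of a simple graph the only three-element circuits are graph triangles. Consequently, for any $X\subseteq E(K_n)$, the restriction $M(K_n)\setminus X$ is $M(K_3)$-free if and only if the corresponding subgraph of $K_n$ is triangle-free, and the element count of the matroid coincides with the edge count of that subgraph. Thus $\ex(Q_{n-1}(\trivialgroup), M(K_3)) = \ex(n, K_3)$. Mantel's theorem supplies the value $\lfloor n^2/4\rfloor$ together with the uniqueness of the balanced complete bipartite graph $K_{\lceil n/2\rceil,\lfloor n/2\rfloor}$ as the extremal graph, and transporting this back under the isomorphism gives the claimed characterisation of the extremal matroids.

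There is no real obstacle here: the only point requiring a moment of care is verifying that joint elements of $Q_{n-1}(\trivialgroup)$ correspond to pendant edges at the apex under the isomorphism, so that the ``$n$-vertex'' count appearing in Mantel's theorem matches the ``$n-1$ Dowling vertices plus one apex'' count implicit in the statement.
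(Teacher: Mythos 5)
Your argument is correct and is exactly the approach the paper intends: the paper treats this result as immediate (placing a \qed directly after the theorem statement) on the grounds that $Q_{n-1}(\trivialgroup)\cong M(K_n)$ and that the triangle Tur\'an problem for $M(K_n)$ is just Mantel's theorem. You merely make explicit the isomorphism (loops become pendant edges to an apex) and the observation that three-element circuits of a cycle matroid are triangles, which is a reasonable thing to check but does not constitute a different route.
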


Next, let $\Gamma = \mathbb{Z}_2$, the multiplicative group on $\{1,-1\}$. Recall that any $\mathbb{Z}_2$-gain graph that realizes $M(K_3)$ is switching-isomorphic to $K_3$, $K_2^{1}$ or $H_2$.

Let $K_{a,b}^{\mathbb{Z}_2}$ be the $\mathbb{Z}_2$-gain graph on vertices $[a+b]$, with the $2ab$ edges $(\pm 1)_{ij}$, where $1 \le i \le a$ and $a+1 \le j \le a+b$.

\begin{theorem}\label{thm: Mantel Z2}
    $\ex(Q_n(\mathbb{Z}_2), M(K_3)) = \lfloor n^2/2\rfloor$ for $n \ge 2$. If $G \subseteq K_n^{\mathbb{Z}_2}$ is a $\mathbb{Z}_2$-gain graph that induces an extremal matroid, then $G$ is isomorphic to $K_{\lceil n/2\rceil, \lfloor n/2\rfloor}^{\mathbb{Z}_2}$. In particular, any extremal matroid is isomorphic to $\FM(K_{\lceil n/2\rceil, \lfloor n/2\rfloor}^{\mathbb{Z}_2})$.
\end{theorem}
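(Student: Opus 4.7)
The strategy is to prove $\ex(Q_n(\mathbb{Z}_2),M(K_3))=\lfloor n^2/2\rfloor$ by matching a lower-bound construction with an upper bound coming from a signed-double-cover argument, and then to characterize the extremal gain graph by tracing equality through this argument. Throughout I would use Lemma~\ref{lem: Z2 realizations}, which translates $M(K_3)$-freeness of $\FM(G)$ into the concrete statement that $G$ has no balanced copy of $K_3$, $K_2^{1}$, or $H_2$; in particular, every loop of $G$ is incident only to single edges leading to non-looped vertices.

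For the lower bound I would verify directly that $K_{\lceil n/2\rceil,\lfloor n/2\rfloor}^{\mathbb{Z}_2}$ has $2\lceil n/2\rceil\lfloor n/2\rfloor=\lfloor n^2/2\rfloor$ edges and contains no balanced copy of any of the three forbidden patterns: its underlying simple graph is bipartite (hence triangle-free) and it has no loops.

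The upper bound is driven by the \emph{signed double cover} $\tilde G$ on vertex set $V\times\{0,1\}$: a non-loop edge $uv$ with gain $+1$ lifts to $(u,0)(v,0)$ and $(u,1)(v,1)$, an edge with gain $-1$ lifts to $(u,0)(v,1)$ and $(u,1)(v,0)$, and a loop at $v$ lifts to the single edge $(v,0)(v,1)$, so that $|E(\tilde G)|=2|E(G)|-L$ where $L$ is the number of loops. The key claim is that $\tilde G$ is triangle-free: a triangle of $\tilde G$ projects either to three distinct $V$-vertices (which forces the product of the gains around the corresponding $G$-triangle to be $+1$, a balanced $K_3$) or to two distinct $V$-vertices via the edge $(v,0)(v,1)$ combined with both lifts of a doubled edge $uv$ (an $H_2$ at $v$), both of which are forbidden. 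Mantel's theorem then yields $|E(\tilde G)|\le n^2$, so $|E(G)|\le(n^2+L)/2$, which collapses to $\lfloor n^2/2\rfloor$ whenever $L=0$.

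The loopy case $L\ge 1$ is handled by induction on $n$: delete a looped vertex $v$ and combine the inductive bound $|E(G-v)|\le\lfloor(n-1)^2/2\rfloor$ with the degree bound $d_v\le n$ (the loop plus at most $n-1$ single-edge neighbors). For even $n$ the gap $\lfloor n^2/2\rfloor-\lfloor(n-1)^2/2\rfloor$ is exactly $n$ and the induction closes; for odd $n$ the only problematic case is $d_v=n$, but then $v$ is adjacent to every other vertex via a single edge, so any doubled edge $uw$ in $G-v$ would produce a balanced triangle $vuw$, forcing $G-v$ to contain no doubled edges and giving $|E(G-v)|\le\binom{n-1}{2}$, which closes the case. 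For the uniqueness of the extremal gain graph I would trace equality through the double-cover argument: Mantel extremality (for $n$ even), or an Andr\'{a}sfai-type stability of Mantel together with the $\mathbb{Z}_2$-equivariance of the cover (for $n$ odd), forces $\tilde G$ to be a complete bipartite graph $K_{2a,2b}$ lifting a partition $V=V_1\cup V_2$ of sizes $\lceil n/2\rceil$ and $\lfloor n/2\rfloor$, and pulling back yields $G\cong K_{\lceil n/2\rceil,\lfloor n/2\rfloor}^{\mathbb{Z}_2}$. I expect the main obstacle to be the odd-$n$ loopy case and the associated odd-$n$ stability argument, where the naive bounds lose by exactly one edge and must be rescued by fine structural observations.
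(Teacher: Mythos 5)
Your proposal is correct in outline but takes a genuinely different route from the paper. The paper's upper-bound argument is a direct two-step induction on $n$: locate a pair $\{i,j\}$ carrying both gains (if none exists, $|E(G)|\le\binom{n}{2}+1$), delete both vertices $i$ and $j$, observe that each remaining vertex $v$ sends at most two edges into $\{i,j\}$ (else a balanced triangle), and sum $2+2(n-2)+\lfloor(n-2)^2/2\rfloor=\lfloor n^2/2\rfloor$; the extremal characterization then falls out because, in the equality case, those two edges must form a doubled edge to exactly one of $i,j$, producing the bipartition directly. Your signed-double-cover reduction instead converts $M(K_3)$-freeness of $\FM(G)$ into triangle-freeness of the lift $\tilde G$ on $2n$ vertices and appeals to Mantel's theorem as a black box. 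This is an appealing and valid idea: I have checked that a triangle of $\tilde G$ on three distinct fibres pulls back to a balanced $K_3$, and a triangle using a loop edge $(v,0)(v,1)$ pulls back to a balanced $H_2$; the pattern $K_2^1$ only lifts to a $4$-cycle, but that direction of the implication is not needed. Your loop-case induction closes as you claim (including the odd-$n$, $d_v=n$ subcase, where loops and doubled edges vanish from $G-v$ and $\binom{n-1}{2}+n\le\lfloor n^2/2\rfloor$ for $n\ge3$). The place where your sketch is thinnest is the odd-$n$, loopless uniqueness step: there $|E(\tilde G)|=n^2-1$ is one short of Mantel, and one needs the fact that a triangle-free non-bipartite graph on $2n$ vertices has at most $n^2-n+1$ edges, which is strictly less than $n^2-1$ for $n\ge 3$; once bipartiteness of $\tilde G$ is established, the $\mathbb{Z}_2$-equivariance and the absence of loop edges force the bipartition to refine the fibres with part sizes $n\pm 1$, and the pullback is $K^{\mathbb{Z}_2}_{(n+1)/2,(n-1)/2}$. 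This stability lemma is true and elementary, but it is an external ingredient; the paper's deletion induction avoids it entirely and is therefore more self-contained, while your route is more conceptual and makes the link to classical Mantel transparent. One small caveat you share with the paper: the claim that the extremal gain graph is unique up to isomorphism fails at $n=3$ (the paper notes four non-switching-isomorphic extremal gain graphs there), so the structural statement should be read as holding for $n\ge4$.
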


\begin{proof}
    The statement is immediate for $n=2$, as $Q_2(\mathbb{Z}_2) \cong U_{2,4}$. For $n=3$, any extremal matroid is isomorphic to $U_{3,4} \cong \FM(G_{1,2})$; it is worth noting that up to switching-isomorphism, there are four extremal gain subgraphs of $K_3^{\mathbb{Z}_2}$, as depicted in Figure~\ref{fig: three descriptions of U34}. We may therefore assume that $n \ge 4$.

    We begin by establishing the lower bound. The gain graph $K_{a,b}^{\mathbb{Z}_2}$ does not have any subgraph switching-isomorphic to $K_3$, $K_2^1$, or $H_2$. Setting $a = \lceil n/2\rceil$ and $b = \lfloor n/2\rfloor$ verifies that $\ex(Q_n(\mathbb{Z}_2), M(K_3)) \ge 2\lceil n/2\rceil\lfloor n/2\rfloor=\lfloor n^2/2\rfloor$.

    We prove the upper bound by induction. Let $M \subseteq Q_n(\mathbb{Z}_2)$ be a triangle-free matroid, and let $G$ be an $n$-vertex $\mathbb{Z}_2$-gain subgraph of $K_n^{\mathbb{Z}_2}$ such that $\FM(G) = M$, and let $k$ be the number of loops in $G$.

    If, for all pairs of vertices $i < j$, the gain graph $G$ contains at most one of the edges $1_{ij}$ and $(-1)_{ij}$, then
    \begin{equation*}
        |M| = |E(G)| \le k + \binom{n}{2} - \binom{k}{2} \le \binom{n}{2} + 1 < \lfloor n^2/2\rfloor,
    \end{equation*}
    where the last inequality holds since $n \ge 4$.
    So we may assume $G$ has a pair of vertices $i < j$ such that both $1_{ij}$ and $(-1)_{ij}$ are edges.

    Let $G'$ be obtained from $G$ by removing the vertices $i$ and $j$ and all edges incident with them. As $G$ does not contain a subgraph switching-isomorphic to $K_3$, $K_2^1$, or $H_2$, neither does $G'$, so by induction $|E(G')| \le 2\lfloor (n-2)/2\rfloor \lceil (n-2)/2\rceil$ with equality if and only if $G'$ is isomorphic to $K^{\mathbb{Z}_2}_{\lfloor (n-2)/2\rfloor,\lceil (n-2)/2\rceil}$.

    For each $v \neq i,j$, the gain graph $G$ contains at most two edges from $\{1_{iv},(-1)_{iv},1_{jv},(-1)_{jv}\}$, for if $G$ contains at least three edges from this set, these edges together with the edges $\{1_{ij},(-1)_{ij}\}$ contains a subgraph switching-isomorphic to $K_3$.

    Combining these estimates, we find that
    \begin{equation}\label{eq:mantel Z2 upper bound}
        |M| = |E(G)| \le 2 + 2(n-2) + 2\lfloor (n-2)/2\rfloor \lceil (n-2)/2\rceil = \lfloor n^2/2\rfloor,
    \end{equation}
    which gives us the extremal function.

    Equality holds in~\eqref{eq:mantel Z2 upper bound} if $G'$ is isomorphic to $K^{\mathbb{Z}_2}_{\lfloor (n-2)/2\rfloor,\lceil (n-2)/2\rceil}$ and for each $v \not\in \{i,j\}$, $G$ contains exactly two of the edges in $\{1_{iv},(-1)_{iv},1_{jv},(-1)_{jv}\}$. When the latter happens, these two edges are necessarily $\{1_{iv},(-1)_{iv}\}$ or $\{1_{jv},(-1)_{jv}\}$, as otherwise $G$ would contain a subgraph switching-isomorphic to $K_3$.
    Let $U_i = \{v \in V(G') : 1_{iv} \in E(G)\}$ and define $U_j$ similarly. Then $U_i$ and $U_j$ partition $V(G')$. As $G$ cannot contain an edge both of whose end points are in $U_i$ or both of whose end points are in $U_j$, it follows that $\{|U_i|, |U_j|\} = \{\lfloor (n-2)/2 \rfloor, \lceil (n-2)/2 \rceil\}$ and that $G$ contains every edge with one endpoint in $U_i$ and the other end point in $U_j$. Now $U_i\cup\{j\}$ and $U_j\cup\{i\}$ gives the required bipartition of $V(G)$.
\end{proof}

\begin{figure}[t]
    \centering
    \begin{tikzpicture}
        \begin{scope}[xshift=-1.5cm, yshift=0cm]
            \node[v] (v1) at (-30:0.6) {};
            \node[v] (v2) at (90:0.6) {};
            \node[v] (v3) at (210:0.6) {};

            \draw[balanced edge]
                (v1)to[bend right=25](v2) (v2)to[bend right=25](v3) (v3)to[bend right=25](v1);
            \draw[unbalanced edge]
                (v1)to[bend left=25](v2) (v2)to[bend left=25](v3) (v3)to[bend left=25](v1);

            \draw[balanced edge] (v1) to[in=-60,out=0,loop] ();
            \draw[balanced edge] (v2) to[in=60,out=120,loop] ();
            \draw[balanced edge] (v3) to[in=180,out=240,loop] ();
            
            \node () at (0,-1.0) {$K_3^{\mathbb{Z}_2}$};
        \end{scope}

        \begin{scope}[xshift=2.7cm, yshift=0cm]
            \node[v] (v1) at (-30:0.6) {};
            \node[v] (v2) at (90:0.6) {};
            \node[v] (v3) at (210:0.6) {};

            \draw[balanced edge]
                (v1)to[bend right=25](v2)
                (v2)to[bend right=25](v3) ;
            \draw[unbalanced edge]
                (v1)to[bend left=25](v2)
                (v2)to[bend left=25](v3) ;

        \end{scope}

        \begin{scope}[xshift=5.4cm, yshift=0cm]
            \node[v] (v1) at (-30:0.6) {};
            \node[v] (v2) at (90:0.6) {};
            \node[v] (v3) at (210:0.6) {};

            \draw[balanced edge]
                (v1)to[bend right=25](v2)
                (v2)to[bend right=25](v3);
            \draw[unbalanced edge]
                (v1)to[bend left=25](v2);

            \draw[balanced edge] (v3) to[in=180,out=240,loop] ();
            
        \end{scope}

        \begin{scope}[xshift=8.1cm, yshift=0cm]
            \node[v] (v1) at (-30:0.6) {};
            \node[v] (v2) at (90:0.6) {};
            \node[v] (v3) at (210:0.6) {};

            \draw[balanced edge]
                (v1)to[bend right=25](v2)
                (v2)to[bend right=25](v3);
            \draw[unbalanced edge]
                (v3)to[bend right=25](v1);

            \draw[balanced edge] (v3) to[in=180,out=240,loop] ();
            
        \end{scope}

        \begin{scope}[xshift=10.8cm, yshift=0cm]
            \node[v] (v1) at (-30:0.6) {};
            \node[v] (v2) at (90:0.6) {};
            \node[v] (v3) at (210:0.6) {};

            \draw[balanced edge]
                (v1)to[bend right=25](v2)
                (v2)to[bend right=25](v3);

            \draw[balanced edge] (v1) to[in=-60,out=0,loop] ();
            
            \draw[balanced edge] (v3) to[in=180,out=240,loop] ();
            
        \end{scope}
    \end{tikzpicture}
    \caption{$K_3^{\mathbb{Z}_2}$ (leftmost) and its subgraphs realizing $U_{3,4}$ (right four illustrations).}
    \label{fig: three descriptions of U34}
\end{figure}
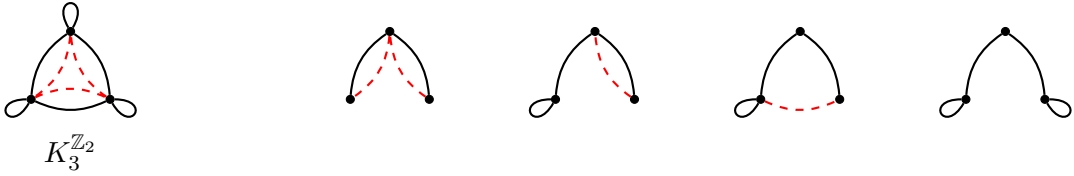

We now move to the case $\Gamma = \mathbb{Z}_3$. Let $x$ be an element of $\mathbb{Z}_3$ of order $3$, so $\mathbb{Z}_3 = \{1,x,x^{-1}\}$.

\begin{theorem}\label{thm: Mantel Z3}
    $\ex(Q_n(\mathbb{Z}_3), M(K_3)) = \lceil n^2/2\rceil$ for all $n \ge 1$. If $M \subseteq Q_n(\mathbb{Z}_3)$ is an extremal matroid, and $G$ is an $n$-vertex $\mathbb{Z}_3$-gain subgraph of $K_n^{\mathbb{Z}_3}$, then $G$ has at most two loops.
\end{theorem}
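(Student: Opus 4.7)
The plan is to induct on $n$, proving both assertions simultaneously.

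\textbf{Lower bound.} I exhibit explicit $\mathbb{Z}_3$-gain subgraphs of $K_n^{\mathbb{Z}_3}$ realizing $\lceil n^2/2 \rceil$ edges. If $n$ is even, let $V = A \sqcup B$ with $|A|=|B|=n/2$ and place two parallel edges of distinct $\mathbb{Z}_3$-gains between each pair in $A \times B$. This gives $2(n/2)^2 = n^2/2$ edges with no three parallel edges (no unbalanced theta), no triangle in the underlying graph (so no balanced triangle), and no loops (so no handcuff). If $n$ is odd, take a doubled complete bipartite $\mathbb{Z}_3$-gain graph on $A \sqcup B$ with $|A|=|B|=(n-1)/2$ and gains $1$ and $2$ on the two parallel edges between each pair, add a vertex $v_0$ joined by a single edge of gain $0$ to every other vertex, and include the joint loop at $v_0$. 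Every triangle has the form $v_0 a b$ with gain $g_{ab} \in \{1, 2\}$ (never $0$), and the unique loop is incident only to single edges, so no $M(K_3)$-realization occurs. The edge count is $1 + (n-1) + 2 \bigl(\tfrac{n-1}{2}\bigr)^2 = \tfrac{n^2+1}{2}$.

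\textbf{Upper bound.} Let $G \subseteq K_n^{\mathbb{Z}_3}$ be $M(K_3)$-free; write $e_{ij}$ for the number of parallel edges between $i$ and $j$ and $\ell$ for the number of loops. The four switching-classes of $\mathbb{Z}_3$-gain realizations of $M(K_3) = U_{2,3}$ (balanced $3$-cycle, unbalanced theta, tight handcuff, loose handcuff) translate into: $e_{ij} \le 2$ everywhere; no balanced triangle; loops form an independent set in the underlying graph; and no loop is incident to a pair with $e_{ij} = 2$. The decisive combinatorial ingredient is the $\mathbb{Z}_3$-case of Cauchy--Davenport: the sumset of any two $2$-element subsets of $\mathbb{Z}_3$ equals all of $\mathbb{Z}_3$. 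Consequently, if $e_{ij} = e_{ik} = 2$ then every gain assignable to an edge $jk$ closes some balanced triangle on $\{i, j, k\}$, forcing $e_{jk} = 0$; in particular $e_{ik} + e_{jk} \le 2$ whenever $e_{ij} = 2$.

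The base cases $n \in \{1, 2\}$ follow from $Q_2(\mathbb{Z}_3) \cong U_{2,5}$ and direct inspection. For $n \ge 3$, if $G$ has no double edge, the constraints force
\begin{equation*}
    |E(G)| \le \ell + \ell(n-\ell) + \binom{n-\ell}{2} = \frac{n^2-n}{2} + \frac{\ell(3-\ell)}{2} \le \frac{n^2 - n + 2}{2} < \left\lceil \frac{n^2}{2} \right\rceil,
\end{equation*}
so $G$ is not extremal. Otherwise pick $\{i, j\}$ with $e_{ij} = 2$; then $i, j$ carry no loops, and $e_{ik} + e_{jk} \le 2$ for every other vertex $k$. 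Deleting $\{i, j\}$ yields $G'' \subseteq K_{n-2}^{\mathbb{Z}_3}$ to which the inductive hypothesis applies, so $|E(G'')| \le \lceil (n-2)^2/2\rceil$ and $G''$ has at most two loops. Summing the deleted edges, $|E(G)| \le 2 + 2(n-2) + \lceil (n-2)^2/2\rceil = \lceil n^2/2\rceil$, with equality forcing $G''$ extremal; the loop count of $G$ then equals that of $G''$, which is at most two.

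\textbf{Main obstacle.} Everything rests on the sumset identity in $\mathbb{Z}_3$: this is what prevents a ``tripled'' bipartite construction of size $3(n/2)^2$, and it explains why the extra edge in the odd case is naturally provided by a loop rather than an additional bipartite edge. Once that identity is in hand, the induction parallels the $\mathbb{Z}_2$-case in Theorem~\ref{thm: Mantel Z2}.
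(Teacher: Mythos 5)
Your proof is correct and follows essentially the same route as the paper: the same doubled-bipartite constructions (with the extra looped vertex in the odd case) for the lower bound, and the same induction that removes the two endpoints of a double edge after showing every other vertex sends at most two edges into that pair -- your Cauchy--Davenport remark is just the explicit form of the paper's ``3-edge theta or balanced triangle'' observation. The only cosmetic difference is that you deduce the at-most-two-loops statement from the induction (equality forces $G''$ to be extremal and the double-edge endpoints are unlooped), whereas the paper uses a separate one-line counting argument $\lceil n^2/2\rceil \le k + k(n-k) + \lceil (n-k)^2/2\rceil$, which fails for $k\ge 3$.
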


\begin{proof}
    The statement is immediate for $n\le 2$, and follows by a straightforward case analysis for $n=3$. We may therefore assume that $n \ge 4$.

    We first show the lower bound. If $n$ is even, let $G$ be the subgraph of $K_n^{\mathbb{Z}_3}$ obtained by keeping only the edges of the form $x_{ij}$ and $x^{-1}_{ij}$ with $i \le n/2$ and $j > n/2$. If $n$ is odd, let $G$ be the subgraph of $K_n^{\mathbb{Z}_3}$ obtained by keeping only the edges of the form $x_{ij}$ and $x^{-1}_{ij}$ for $i < n/2$ and $n/2 < j < n$, as well as the loop at vertex $n$ and all edges of the form $1_{in}$ for $i < n$. In both cases, it is straightforward to verify that $G$ has $\lceil n^2/2\rceil$ edges, and no subgraph switching-isomorphic to a 3-edge theta, 3-edge handcuff, or $K_3$.

    We prove the upper bound by induction. Let $M \subseteq Q_n(\mathbb{Z}_3)$ be an extremal matroid, and let $G$ be an $n$-vertex $\mathbb{Z}_3$-gain subgraph of $K_n^{\mathbb{Z}_3}$ such that $\FM(G) = M$. If $G$ does not contain any parallel edges and has exactly $k$ loops, then
    \begin{equation*}
        |E(G)| \le k + \binom{n}{2} - \binom{k}{2} \le \binom{n}{2} + 1 < \lceil n^2/2\rceil,
    \end{equation*}
    so we may assume that $G$ has a pair of vertices $i < j$ such that $G$ contains exactly two edges from $\{1_{ij}, x_{ij}, x^{-1}_{ij}\}$. Let $G'$ be the $\mathbb{Z}_3$-gain graph obtained from $G$ by removing the vertices $i$ and $j$ together with all edges incident with those vertices. Then $G'$ is a subgraph of $K_{n-2}^{\mathbb{Z}_3}$ that does not contain a subgraph switching-isomorphic to a 3-edge theta, a 3-edge handcuff, or $K_3$, so by induction $G'$ has at most $\lceil (n-2)^2/2\rceil$ edges. For any vertex $v \not\in \{i,j\}$, if $G$ contains three edges of the form $\alpha_{uv}$ with $\alpha \in \mathbb{Z}_3$ and $u \in \{i,j\}$, then $G$ contains a subgraph switching-isomorphic to a 3-edge theta or a balanced $K_3$, so $G$ contains at most two of such edges. It follows that
    \begin{equation*}
        |E(G)| \le 2 + 2(n-2) + \lceil (n-2)^2/2\rceil = \lceil n^2 /2\rceil.
    \end{equation*}

    Finally, we prove that $G$ has at most two loops. If $G$ has $\lceil n^2/2\rceil$ edges of which $k$ are loops, then, by an argument similar as before,
    \begin{equation*}
        \lceil n^2/2\rceil = |E(G)| \le k + k(n-k) + \lceil (n-k)^2/2\rceil,
    \end{equation*}
    which fails if $k \ge 3$.
\end{proof}

We remark that there is a rich family of extremal $\mathbb{Z}_3$-gain graphs. For example, when $n$ is even, and $U_1 \cup V_1 \cup U_2 \cup V_2 \cup \cdots \cup U_k \cup V_k$ is a partition of the vertices of $K_n^{\mathbb{Z}_3}$ such that $|U_s| = |V_s|$ for all $s \in [k]$, the subgraph of $K_n^{\mathbb{Z}_3}$ comprising the edges of the form $x_{ij}$ and $x^{-1}_{ij}$ for $(i,j) \in U_s \times V_s$ and all edges $1_{ij}$ for $(i,j) \in (U_s \cup V_s) \times (U_t \cup V_t)$ for $s \neq t$ is an extremal graph.

So far, we have only considered finite groups of order at most~$3$. We now focus on larger groups.
A pair of elements $\{x, y\} \subseteq \Gamma$ is called a \emph{good pair} if $x$ and $y$ are distinct, neither is the identity in $\Gamma$, and moreover $x^2 \neq y$ and $y^2 \neq x$.

\begin{lemma}
    A group $\Gamma$ has a good pair if and only if $\Gamma$ has order at least~$4$.
\end{lemma}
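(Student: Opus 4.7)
The plan is to prove both directions by a short case analysis on the orders of elements of $\Gamma$.

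For the easy direction, suppose $|\Gamma| \le 3$, so $\Gamma \in \{\trivialgroup, \mathbb{Z}_2, \mathbb{Z}_3\}$. A good pair requires two distinct nonidentity elements, which rules out $\trivialgroup$ and $\mathbb{Z}_2$ immediately. In $\mathbb{Z}_3 = \{1, a, a^{-1}\}$ the only candidate pair is $\{a, a^{-1}\}$, but $a^2 = a^{-1}$, violating the good-pair condition. So no group of order at most $3$ admits a good pair.

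For the other direction, assume $|\Gamma| \ge 4$ and split into three cases based on element orders:
\begin{enumerate}[(i)]
    \item If $\Gamma$ contains an element $x$ of order at least $4$, I would try $\{x, x^{-1}\}$. Then $x \neq x^{-1}$ (since $x^2 \neq 1$), both are nonidentity, and the conditions $x^2 \neq x^{-1}$ and $(x^{-1})^2 \neq x$ both reduce to $x^3 \neq 1$, which holds.
    \item If every nonidentity element of $\Gamma$ has order $2$, then $\Gamma$ is an elementary abelian $2$-group. Since $|\Gamma| \ge 4$, pick any two distinct nonidentity elements $x, y$; then $x^2 = y^2 = 1$, both of which differ from the nonidentity elements $y$ and $x$ respectively.
    \item Otherwise, every nonidentity element has order $2$ or $3$ and some element $a$ has order $3$. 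Since $|\langle a \rangle| = 3 < |\Gamma|$, pick $b \in \Gamma \setminus \langle a \rangle$. The conditions $a^2 \neq b$ and $b^2 \neq a$ each need to be checked: $a^2 \in \langle a \rangle$ while $b \notin \langle a \rangle$, giving $a^2 \neq b$. If $b$ has order $2$, then $b^2 = 1 \neq a$; if $b$ has order $3$, then $\langle a \rangle \cap \langle b \rangle$ is a proper subgroup of both $\langle a \rangle$ and $\langle b \rangle$, hence trivial, so $b^2 \in \langle b \rangle \setminus \{1\}$ is not in $\langle a \rangle$ and in particular $b^2 \neq a$.
\end{enumerate}

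The only subtle step is case~(iii), where one must be careful to rule out $b^2 = a$ when $b$ has order $3$; I expect this to be the main (if minor) obstacle, since it is tempting to forget that $b \in \Gamma \setminus \langle a \rangle$ does not a priori prevent $b^2$ from landing back in $\langle a \rangle$. The argument via $\langle a \rangle \cap \langle b \rangle$ being a proper, hence trivial, subgroup of the order-$3$ group $\langle b \rangle$ resolves this cleanly.
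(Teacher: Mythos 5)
Your proof is correct and follows essentially the same route as the paper: the small-order direction is checked on $\trivialgroup$, $\mathbb{Z}_2$, $\mathbb{Z}_3$, and for $|\Gamma|\ge 4$ the paper also takes $\{g,g^{-1}\}$ when some $g$ has order at least $4$, and otherwise an element $x$ of order $2$ or $3$ together with some $y\in\Gamma\setminus\langle x\rangle$. Your cases (ii) and (iii) merely split the paper's single ``otherwise'' case and spell out the verification (e.g.\ ruling out $b^2=a$) that the paper leaves implicit.
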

\begin{proof}
    If the group $\Gamma$ has order less than four, then $\Gamma \cong \mathbb{Z}_k$ for $k \in \{1,2,3\}$. It is easily checked that $\Gamma$ does not have a good pair in any of these cases. So we may assume that $\Gamma$ has order at least~4.

    Suppose that $\Gamma$ has an element $g$ of order at least~$4$; then the pair $\{g, g^{-1}\}$ is good. Otherwise, let $x$ be an element of order~2 or~3, and let $y \in \Gamma\setminus\langle x\rangle$; again $\{x, y\}$ is a good pair.
\end{proof}

\begin{theorem}\label{thm: Mantel Gamma}
    Let $\Gamma$ be a finite group of order at least~4. Then $\ex(Q_n(\Gamma), M(K_3)) = 2\binom{n}{2} = n(n-1)$ for all $n\ge 2$. If $n \ge 3$, then any triangle-free matroid $M \subseteq Q_n(\Gamma)$ on $2\binom{n}{2}$ elements does not contain any joint of $Q_n(\Gamma)$.
\end{theorem}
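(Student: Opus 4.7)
The plan is to convert the extremal problem on $Q_n(\Gamma)$ into one on $\Gamma$-gain subgraphs of $K_n^\Gamma$, and then read off both the count and the structural conclusion from a case analysis of $3$-edge circuits of frame matroids. For any $M\subseteq Q_n(\Gamma)$, I realize $M=\FM(G)$ for some gain subgraph $G\subseteq K_n^\Gamma$, and let $k$ denote the number of loops of $G$. The key structural fact, read off directly from the circuit description in Section~\ref{sec: preliminaries}, is that every $3$-element circuit of $\FM(G)$ is of exactly one of four shapes: a balanced triangle on three distinct vertices, an unbalanced theta consisting of three parallel edges, a tight handcuff consisting of a loop together with an unbalanced $2$-cycle through its vertex, or a loose handcuff consisting of two loops at distinct vertices joined by a single edge.

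For the lower bound, the preceding lemma supplies a good pair $\{x,y\}\subseteq\Gamma$. I take $G$ to be the loopless gain subgraph of $K_n^\Gamma$ that retains exactly the two edges $x_{ij}$ and $y_{ij}$ for each pair $i<j$, so that $|E(G)|=2\binom{n}{2}$. The theta and both handcuff shapes are excluded because $G$ is loopless and each vertex pair carries only two parallel edges. The balanced-triangle shape on vertices $i<j<k$ would require $ab=c$ for some $a,b,c\in\{x,y\}$, and the four cases $(a,b)\in\{x,y\}^2$ are ruled out one by one by the good-pair conditions $x\ne 1\ne y$, $x\ne y$, $x^2\ne y$, and $y^2\ne x$.

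For the upper bound, I forbid each of the three non-balanced-triangle shapes in $G$ to obtain local constraints: at most two parallel edges between any pair (else a theta); if a vertex $v$ carries a loop, then $v$ is joined to any other vertex by at most one edge of $G$ (else a tight handcuff); and no two loop-carrying vertices are joined in $G$ at all (else a loose handcuff). Summing the contributions of the loops, the edges between loop and non-loop vertices, and the edges among non-loop vertices yields
\begin{equation*}
|E(G)| \le k + k(n-k) + 2\binom{n-k}{2} = n(n-1) - k(n-2),
\end{equation*}
from which $|E(G)|\le 2\binom{n}{2}$ is immediate. For $n\ge 3$ the inequality is strict unless $k=0$, so every extremal $G$ has no loops, and hence the corresponding $M$ contains no joint of $Q_n(\Gamma)$; this gives the ``moreover'' part.

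The only real obstacle is the bookkeeping at the start: confirming that the four configurations above exhaust all $3$-edge circuits in a frame matroid (including checking subdivisions for handcuffs), and verifying that the good-pair axioms are exactly what is needed to kill every balanced triangle in the extremal construction. Once those are in place, the counting argument is a one-line computation.
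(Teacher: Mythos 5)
Your proposal is correct and follows essentially the same route as the paper: the identical good-pair construction for the lower bound, and an upper bound driven by the same local constraints coming from $3$-edge thetas and handcuffs, with the only difference being that you count explicitly by the number $k$ of loops (getting $n(n-1)-k(n-2)$) whereas the paper packages the same constraints as ``at most two elements of $\{\alpha_{ij}:\alpha\in\Gamma\}\cup\{b_i,b_j\}$ per pair $i<j$'' and uses a covering bound. Both versions yield the extremal value and the loop-free (joint-free) structure of extremal examples in the same way.
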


\begin{proof}
    Let $\{x, y\}$ be a good pair in $\Gamma$. Consider the $\Gamma$-gain graph $G$ on vertex set $[n]$ with the $2\binom{n}{2}$ edges of the form $\alpha_{ij}$ where $\alpha \in \{x,y\}$ and $1 \le i < j \le n$. By construction, $G$ has no three-element handcuffs or thetas. Since the pair $\{x,y\}$ is good, $G$ does not have any balanced triangles. It follows that $\FM(G)$ is a triangle-free submatroid of $Q_n(\Gamma)$ on $2\binom{n}{2}$ elements.

    Now let $n \ge 3$ and let $G \subseteq K_n^\Gamma$ be an $n$-vertex $\Gamma$-gain graph that does not contain any three-element theta or handcuff. Then for each pair of vertices $i < j$, $G$ contains at most two of the edges in the set $\{\alpha_{ij} : \alpha \in \Gamma\} \cup \{b_i, b_j\}$. As these $\binom{n}{2}$ sets cover $E(K_n^\Gamma)$, it follows that $G$ contains at most $2\binom{n}{2}$ edges.
    If equality holds, then each such set must contribute exactly two elements, so $G$ cannot contain any of the elements $b_i$.
\end{proof}

\section{Tur\'{a}n density of graphic matroids}\label{sec: graphic}

\subsection{General upper bound: Proof of Theorem~\ref{thm intro: gain ESS}}

The following theorem proves a more general version of Theorem~\ref{thm intro: gain ESS}.
\begin{theorem}\label{thm:gain ESS}
    Let $\Gamma' \leq \Gamma$ be finite groups and let $N$ be a matroid.
    Then 
    \begin{equation*}
        |\Gamma|^{-1} \ex(Q_n(\Gamma),N) \le |\Gamma'|^{-1} \ex(Q_n(\Gamma'),N).
    \end{equation*}
    In particular,
    \begin{equation*}
        \pi(\Gamma, M(H)) \le \frac{\chi(H)-2}{\chi(H)-1}
        \qquad\text{for all graphs $H$.}
    \end{equation*}
\end{theorem}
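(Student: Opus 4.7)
The plan is to exploit switching to produce many embedded copies of $Q_n(\Gamma')$ inside $Q_n(\Gamma)$ and then apply an averaging argument: for any $N$-free submatroid $M \subseteq Q_n(\Gamma)$, the restriction of $M$ to each such copy is an $N$-free submatroid of $Q_n(\Gamma')$, hence has at most $\ex(Q_n(\Gamma'), N)$ elements, and a uniform double-count over the family of copies extracts the desired ratio.

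For each tuple $\vec\gamma = (\gamma_1,\dots,\gamma_n) \in \Gamma^n$, I would define $G_{\vec\gamma}$ to be the subgraph of $K_n^\Gamma$ consisting of all joints $b_1,\dots,b_n$ together with every non-joint edge $x_{ij}$ (with $i<j$) for which $\gamma_i^{-1} x \gamma_j \in \Gamma'$. Switching at each vertex $v$ by $\gamma_v$ replaces the label $x$ of each retained edge by $\gamma_i^{-1} x \gamma_j \in \Gamma'$, so the switched gain graph is precisely $K_n^{\Gamma'}$. Since switching preserves the frame matroid, $\FM(G_{\vec\gamma}) \cong Q_n(\Gamma')$, giving an abundant family of embedded copies.

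For the double-count, every joint lies in every $G_{\vec\gamma}$, while for a fixed non-joint $x_{ij}$ the condition $\gamma_i^{-1} x \gamma_j \in \Gamma'$ is a left-coset constraint on the pair $(\gamma_i,\gamma_j)$ and therefore holds for exactly $|\Gamma|^{n-1}|\Gamma'|$ of the $|\Gamma|^n$ tuples. Writing $j(M)$ and $e(M)$ for the numbers of joints and non-joints of $M$, averaging over $\vec\gamma$ uniformly in $\Gamma^n$ yields
\[
    j(M) + \frac{|\Gamma'|}{|\Gamma|}\, e(M)
    \;=\;
    \mathbb{E}_{\vec\gamma}\bigl|E(M) \cap E(G_{\vec\gamma})\bigr|
    \;\le\;
    \ex(Q_n(\Gamma'), N).
\]
Multiplying by $|\Gamma|$ and using $|\Gamma'| \le |\Gamma|$ gives $|\Gamma'||M| \le |\Gamma'|e(M) + |\Gamma|j(M) \le |\Gamma| \ex(Q_n(\Gamma'), N)$, which is the first inequality.

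For the Erd\H{o}s--Stone--Simonovits-type consequence I would specialize to $\Gamma' = \trivialgroup$, so that $Q_n(\trivialgroup) \cong M(K_{n+1})$. If $G$ is a subgraph of $K_{n+1}$ and $M(G)$ is $M(H)$-free, then $G$ itself contains no subgraph isomorphic to $H$ (since such a subgraph would realize $M(H)$ inside $M(G)$); hence $\ex(Q_n(\trivialgroup), M(H)) \le \ex(n+1, H)$. Dividing the first inequality by $|\Gamma|\binom{n}{2}$, letting $n \to \infty$, and invoking the classical bound $\ex(n+1, H) = \bigl(\tfrac{\chi(H)-2}{\chi(H)-1} + o(1)\bigr)\binom{n+1}{2}$ gives the stated bound on $\pi(\Gamma, M(H))$. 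I expect the main creative step to be spotting that the switching-parameterized family $(G_{\vec\gamma})_{\vec\gamma \in \Gamma^n}$ covers the non-joint edges uniformly with ratio $|\Gamma'|/|\Gamma|$ while covering every joint; once that is in hand, the rest is a mechanical averaging and a specialization to the graphic case.
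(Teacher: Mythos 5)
Your proof is correct and is essentially the paper's argument in different packaging: the paper greedily switches vertex-by-vertex so that at least a $|\Gamma'|/|\Gamma|$-fraction of the non-loop edges of a given $N$-free gain subgraph of $K_n^\Gamma$ receive labels in $\Gamma'$ and then deletes the rest, whereas you average uniformly over all $|\Gamma|^n$ switchings, viewed as a family of embedded copies of $Q_n(\Gamma')$ covering every joint and covering each non-joint with density $|\Gamma'|/|\Gamma|$ --- both hinge on the same coset count and on switching preserving the frame matroid. Your second part (specializing to $\Gamma'=\trivialgroup$, noting $\ex(Q_n(\trivialgroup),M(H))\le\ex(n+1,H)$, and invoking Erd\H{o}s--Stone--Simonovits) is exactly the paper's route as well.
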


\begin{proof}
    Let $G$ be a subgraph of $K_n^{\Gamma}$ such that $|E(G)| > \frac{|\Gamma|}{|\Gamma'|} \ex(Q_n(\Gamma'),N)$.
    We claim that there exists a graph $G'$ that is switching-equivalent to a subgraph of $G$ such that $|E(G')| > \ex(Q_n(\Gamma'),N)$ and all non-loop edges of $G'$ are labelled elements of $\Gamma'$.
    Indeed, such a subgraph can be obtained by the following procedure.
    Starting from $G$, for $j = 2, 3, \ldots, n$, in that order, switch at vertex $j$ of $G$ so that  at least a $|\Gamma'|/|\Gamma|$-fraction of edges $x_{ij}$ with $i<j$ is labelled an element from $\Gamma'$. Finally, delete all the non-loop edges that are not labelled by $\Gamma'$.
    Then $N$ is a submatroid of $\FM(G')$ and thus it is a submatroid of $\FM(G)$.
    Therefore, $\ex(Q_n(\Gamma),N) \le \frac{|\Gamma|}{|\Gamma'|} \ex(Q_n(\Gamma'),N)$.
    
    The second statement follows immediately upon combining the first statement with $\Gamma'$ the trivial group and the Erd\H{o}s--Stone--Simonovits theorem.
\end{proof}

The Erd\H{o}s--Stone--Simonovits theorem implies that the asymptotic behaviour of $\ex(n,H)$ is quadratic unless $H$ is a bipartite graph. This observation extends to extremal numbers for Dowling geometries.

\begin{corollary}\label{cor: bipartite graphic matroid}
    Let $\Gamma$ be a finite group and let $N$ be a matroid.
    Then $\pi(\Gamma, N) = 0$ if and only if $N$ is a bipartite graphic matroid, i.e., $N = M(H)$ for some bipartite graph $H$.
\end{corollary}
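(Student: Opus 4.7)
The plan is to prove both directions of the equivalence separately. The reverse implication is essentially immediate from Theorem~\ref{thm:gain ESS}: if $N = M(H)$ for a bipartite graph $H$, then (since by convention $H$ has at least one edge) $\chi(H) = 2$, and Theorem~\ref{thm:gain ESS} yields $\pi(\Gamma, N) \le (\chi(H) - 2)/(\chi(H) - 1) = 0$. Combined with the trivial lower bound $\pi(\Gamma, N) \ge 0$, we conclude $\pi(\Gamma, N) = 0$.

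For the forward implication, I would argue the contrapositive: assume $N$ is not a bipartite graphic matroid, and show $\pi(\Gamma, N) > 0$. The construction is as follows. Let $A = \{1, \ldots, \lceil n/2 \rceil\}$ and $B = [n] \setminus A$, and let $G_n$ be the subgraph of $K_n^\Gamma$ with edge set $\{1_{ij} : i \in A,\ j \in B\}$, i.e., the identity-labelled copy of the balanced complete bipartite graph between $A$ and $B$. Then $|E(G_n)| = \lfloor n^2/4 \rfloor$, and since every cycle of $G_n$ is balanced, $\FM(G_n)$ is isomorphic to $M(K_{\lceil n/2 \rceil, \lfloor n/2 \rfloor})$, a bipartite graphic matroid. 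Any submatroid of $\FM(G_n)$ is obtained by deleting edges from the underlying bipartite graph, so it is itself of the form $M(H')$ for some bipartite graph $H'$; in particular, since $N$ is not bipartite graphic, $N$ is not a submatroid of $\FM(G_n)$. Therefore $\ex(Q_n(\Gamma), N) \ge \lfloor n^2/4 \rfloor$, which yields
\begin{equation*}
    \pi(\Gamma, N) \ge \lim_{n\to\infty} \frac{\lfloor n^2/4\rfloor}{|\Gamma|\binom{n}{2}} = \frac{1}{2|\Gamma|} > 0.
\end{equation*}

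I do not expect any significant obstacle: both directions reduce to short arguments once the identity-gain bipartite construction is identified. The one step worth calling out is the closure property that a submatroid of a bipartite graphic matroid is again bipartite graphic; this holds because matroid deletion corresponds to edge deletion on the underlying graph, and subgraphs of bipartite graphs remain bipartite.
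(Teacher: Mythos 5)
Your proof is correct and follows essentially the same route as the paper: the backward direction via Theorem~\ref{thm:gain ESS}, and the forward direction via the identity-labelled balanced complete bipartite subgraph of $K_n^\Gamma$, whose frame matroid is bipartite graphic and hence $N$-free, giving $\pi(\Gamma,N)\ge \frac{1}{2|\Gamma|}$. Your edge count $\lfloor n^2/4\rfloor$ is the right one for this construction (and yields the same density bound the paper states), and your closure observation that restrictions of bipartite graphic matroids are bipartite graphic plays the same role as the paper's remark that a non-bipartite-graphic $N$ is either non-graphic or has an odd circuit.
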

\begin{proof}
    The backward direction is immediate from Theorem~\ref{thm:gain ESS}.
    To prove the forward direction, suppose that $N$ is not the cycle matroid of any bipartite graph.
    Then $N$ is non-graphic or $N$ has an odd circuit, so $M(K_{\lfloor n/2 \rfloor, \lceil n/2 \rceil})$ is $N$-free.
    It follows that $\ex(Q_n(\Gamma),N) \ge \lfloor n^2/2 \rfloor$ and so $\pi(\Gamma,N) \ge \frac{1}{2|\Gamma|}$.
\end{proof}

In contrast to the Erd\H{o}s--Stone--Simonovits theorem, Theorem~\ref{thm:gain ESS} only bounds $\pi(\Gamma,M(H))$ from above. We now introduce a criterion that is sufficient for equality.
\begin{definition}
	A pair $(\Gamma, H)$ of a finite group $\Gamma$ and a graph $H$ is called a \emph{critical pair} (we also say that $H$ is \emph{$\Gamma$-critical}) if there does not exist a loopless, nonempty $\Gamma$-labelled graph $H'$ such that $\FM(H) \cong \FM(H')$ such that $\chi(\underline{H}') < \chi(H)$.
\end{definition}

Let $T_{n,k}^\Gamma$ be the subgraph of $K_n^\Gamma$ obtained by replacing each edge of the Tur\'{a}n graph $T_{n,k}$ by $|\Gamma|$ labelled edges. More precisely, $T_{n,k}^\Gamma$ is obtained by partitioning the vertex set of $K_n^\Gamma$ into $k$ classes, all of which have size $\lfloor n/k\rfloor$ or $\lceil n/k\rceil$ and by keeping only the edges that connect vertices between different classes.

\begin{proposition}
    Let $\Gamma$ be a finite group, and let $H$ be a simple graph with at least one edge. If $H$ is $\Gamma$-critical, then $\pi(\Gamma, M(H)) = \frac{\chi(H)-2}{\chi(H)-1}$.
\end{proposition}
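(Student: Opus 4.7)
The plan is to combine Theorem~\ref{thm:gain ESS} with an explicit extremal construction. The upper bound $\pi(\Gamma, M(H)) \le \frac{\chi(H)-2}{\chi(H)-1}$ is already delivered by Theorem~\ref{thm:gain ESS}, so the task reduces to proving the matching lower bound, and $\Gamma$-criticality is introduced precisely to make the natural construction succeed.

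For the lower bound I would take the obvious candidate: the gain graph $T_{n,k}^\Gamma$ defined just before the proposition, with $k = \chi(H)-1$. Since $T_{n,k}^\Gamma$ is the $|\Gamma|$-fold edge-blow-up of the Tur\'{a}n graph $T_{n,k}$, it has $|\Gamma|\cdot|E(T_{n,k})| = \bigl(1-\tfrac{1}{k}+o(1)\bigr)|\Gamma|\binom{n}{2}$ edges. Consequently, if $\FM(T_{n,k}^\Gamma)$ turns out to be $M(H)$-free, then $\ex(Q_n(\Gamma),M(H)) \ge |\Gamma|\cdot|E(T_{n,k})|$ and hence $\pi(\Gamma,M(H)) \ge \frac{k-1}{k} = \frac{\chi(H)-2}{\chi(H)-1}$, which is exactly what we need.

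So the whole content of the proof is to verify that $\FM(T_{n,k}^\Gamma)$ is $M(H)$-free. Suppose toward a contradiction that $M(H) \subseteq \FM(T_{n,k}^\Gamma)$. By the lemma translating frame-matroid embeddings into balanced gain-subgraph copies, there exists a $\Gamma$-gain graph $H'$ with $\FM(H') \cong \FM(H) = M(H)$ such that $T_{n,k}^\Gamma$ contains a gain subgraph switching-isomorphic to $H'$. Switching preserves the underlying graph exactly and graph isomorphism preserves it up to isomorphism, in each case preserving the presence or absence of loops; since $T_{n,k}^\Gamma$ is loopless with underlying graph contained in $T_{n,k}$, it follows that $H'$ is loopless and $\chi(\underline{H'}) \le \chi(T_{n,k}) = k = \chi(H)-1$. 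Finally, because $H$ has at least one edge, $H'$ is nonempty, and this directly contradicts the $\Gamma$-criticality of $H$.

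The main obstacle is bookkeeping rather than a substantive difficulty: one must carefully verify that the gain-subgraph copy produced by the embedding lemma inherits both its looplessness and the chromatic bound from the ambient Tur\'{a}n blow-up, and then recognize this as exactly the configuration forbidden by the definition of $\Gamma$-criticality. No delicate counting or probabilistic argument is required beyond this identification; the definition of critical pair is essentially engineered so that this argument goes through.
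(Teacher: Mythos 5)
Your proposal is correct and follows essentially the same route as the paper: the upper bound is quoted from Theorem~\ref{thm:gain ESS}, and the lower bound comes from observing that $\Gamma$-criticality forces $T_{n,\chi(H)-1}^\Gamma$ to be $M(H)$-free, giving density $\frac{\chi(H)-2}{\chi(H)-1}$. The only cosmetic difference is that the paper dispatches the bipartite case separately via Corollary~\ref{cor: bipartite graphic matroid}, while your argument covers it trivially (the target density is $0$), and you spell out in more detail the switching-isomorphism bookkeeping that the paper leaves implicit.
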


\begin{proof}
    When $H$ is bipartite, this follows from Corollary~\ref{cor: bipartite graphic matroid}, so we may assume that $\chi(H) \ge 3$. Since $H$ is $\Gamma$-critical, $T_{n,\chi(H)-1}^\Gamma$ has no subgraph isomorphic to $H'$ such that $\FM(H') \cong \FM(H)$. Therefore $\ex(Q_n(\Gamma),M(H)) \ge |E(T_{n,\chi(H)-1}^\Gamma)| = \frac{\chi(H)-2}{\chi(H)-1} |\Gamma| \binom{n}{2} + o(n^2)$. The claim now follows from Theorem~\ref{thm:gain ESS}.
\end{proof}

It is immediate from the definition that a bipartite graph $H$ with at least one edge is $\Gamma$-critical for every group $\Gamma$. It follows from Lemma~\ref{lem: Z2 realizations} that the same conclusion holds for a clique with at least five vertices. The triangle $K_3$ is $\Gamma$-critical if and only if $|\Gamma| \le 2$: for larger groups, $M(K_3)$ is realized by a 2-vertex theta-graph. The 4-clique $K_4$ is $\Gamma$-critical if and only if $\Gamma$ does not have an element of order~2: if $x$ is an element of order 2, then $M(K_4)$ is realized by a doubling of $K_3$ whose edges are labelled $1$ and $x$.
For $t \ge 2$, the odd cycle $C_{2t+1}$ is not $\Gamma$-critical for any nontrivial group $\Gamma$: it is realized by a loose handcuff consisting of two pairs of parallel edges connected by a $(2t-3)$-edge path.

When $(\Gamma, H)$ is not a critical pair, it may still be true that $\pi(\Gamma, H) = \frac{\chi(H)-2}{\chi(H)-1}$ -- for example when $\Gamma$ is a group of order~4 and $H=K_3$ -- or $\pi(\Gamma, H) < \frac{\chi(H)-2}{\chi(H)-1}$, for example when $\Gamma$ is a group of order at least~5 and $H=K_3$.

The next lemma shows that equality holds for the non-critical pair $(\mathbb{Z}_2, K_4)$. In Theorem~\ref{thm: Z2 K4} we obtain the exact extremal number in this case.
\begin{lemma}\label{lem: K4 lower}
    Let $\Gamma$ be a finite group of order $m\ge 2$.
    Then $\pi(\Gamma, M(K_4)) \ge \frac{m}{2m-1}$.
\end{lemma}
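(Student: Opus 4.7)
The plan is to establish the lower bound by exhibiting, for each sufficiently large $n$, an explicit $M(K_4)$-free $\Gamma$-gain graph $G_n \subseteq K_n^{\Gamma}$ with
$|E(G_n)| \geq \tfrac{m^2}{2m-1}\binom{n}{2} - o(n^2)$;
passing to the limit then yields $\pi(\Gamma, M(K_4)) \geq m/(2m-1)$. The construction is a blow-up. Partition $V(G_n)=[n]$ into $2m-1$ equal classes $V_1,\ldots,V_{2m-1}$ of size $t=n/(2m-1)$, and choose, for each unordered pair $\{i,j\}$, a subset $S_{ij}\subseteq\Gamma$ of labels. For every $(u,v)\in V_i\times V_j$ with $i<j$ and every $g\in S_{ij}$, we include the edge $g_{uv}$ in $G_n$. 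No loops are added.

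The construction must satisfy two requirements. For the density, we need $\sum_{i<j}|S_{ij}| = m^2(m-1) + o(1)$ so that the total edge count is $\sum_{i<j}|S_{ij}|\,t^2 \approx \tfrac{m^2}{2m-1}\binom{n}{2}$. For $M(K_4)$-freeness, we invoke Lemma~\ref{lem: Z2 realizations} and its analogue for general $\Gamma$: the realisations $K_3^1$ and $H_3$ are automatically avoided since $G_n$ is loopless. Any balanced $K_4$ in $G_n$ must have its four vertices in four distinct classes (because within-class pairs have no edge), and it projects to a balanced $K_4$ on those four indices of $[2m-1]$ whose labels come from the corresponding $S_{ij}$. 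A doubled triangle $C_3^{\mathbb{Z}_2}$, relevant only when $\Gamma$ has an element of order two, projects to three class indices whose pairwise label sets simultaneously contain a common $\mathbb{Z}_2$-subgroup $\{1,x\}$ with $x^2=1$. So one needs to choose the $S_{ij}$ so that no four indices support such a balanced $K_4$ and no three indices support such a doubled triangle.

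In the easy regime when $\Gamma$ has no element of order two, the $C_3^{\mathbb{Z}_2}$ constraint is vacuous. Taking the tripartite structure $2m-1\ge3$ with $S_{ij}=\Gamma$ whenever the corresponding pair of classes lies across the $3$-partition already yields an $M(K_4)$-free graph (the underlying graph is $K_4$-free, so no balanced $K_4$ occurs), and one computes density $2/3 \geq m/(2m-1)$. When $\Gamma$ does contain an element of order two, the label sets $S_{ij}$ must be restricted on certain triples of classes so that no pair of labels generating a $\mathbb{Z}_2$-subgroup of $\Gamma$ is simultaneously present across all three corresponding pairs; this is arranged by a careful combinatorial design on $[2m-1]$ that distributes the "forbidden" $\mathbb{Z}_2$-doublings across different triples, while preserving the average $|S_{ij}|\approx m^2/(2m-1)$.

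The main obstacle is the explicit design of the $S_{ij}$ when $\Gamma$ has elements of order two. As the number of such elements grows, more distinct doubled-triangle configurations must be simultaneously avoided, and one needs a combinatorial design on $[2m-1]$ classes that allocates labels so as to cover every $\mathbb{Z}_2$-subgroup of $\Gamma$ by the "missing" labels on each triple of classes, without sacrificing too many edges. That the resulting bound $m/(2m-1)$ is the correct target is consistent with the tight cases $\Gamma=\mathbb{Z}_2$ (Theorem~\ref{thm intro: K4}) and $\Gamma=\mathbb{Z}_2\times\mathbb{Z}_2$ (Theorem~\ref{thm intro: order 4}), and this matching of the bound in those cases guides the construction in the general one.
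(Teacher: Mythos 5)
Your proposal has a genuine gap, one you acknowledge yourself: in the case where $\Gamma$ contains an element of order two, you do not actually produce the label sets $S_{ij}$ for the blow-up and instead defer to an unspecified ``careful combinatorial design on $[2m-1]$.'' That case is precisely the one that matters for this lemma (for groups with no involution, Theorem~\ref{thm intro: K4} already gives $\pi(\Gamma,M(K_4)) = 2/3 \ge m/(2m-1)$, so the interesting content is all in the order-two case). Until that design is exhibited and verified, the lemma is not proved. A secondary concern: your whole analysis of which configurations must be avoided rests on Lemma~\ref{lem: Z2 realizations} and ``its analogue for general $\Gamma$,'' but that analogue is not stated in the paper, and for general $\Gamma$ a loopless $3$-vertex realization of $M(K_4)$ need not look like a $\mathbb{Z}_2$-doubled triangle; you would need to invoke the full characterization of~\cite{CDFP2015} and rule out all of its loopless types, not just $K_4$ and $C_3^{\mathbb{Z}_2}$. (There is also a small arithmetic slip: with $2m-1$ classes of size $t=n/(2m-1)$, hitting density $\frac{m}{2m-1}$ requires $\sum_{i<j}|S_{ij}| \approx \frac{m^2(2m-1)}{2}$, not $m^2(m-1)$.)

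The paper's proof avoids all of this with a much simpler construction. Take $n' = \lfloor \frac{m}{2m-1}n\rfloor$ ``inner'' vertices joined pairwise by a \emph{single} edge, all labeled by one fixed non-identity element $x$; take $n - n'$ ``outer'' vertices with no edges among them; and join every inner vertex to every outer vertex by all $m$ labels. There are no loops, every outer pair has multiplicity $0$, and every inner pair has multiplicity $1$, so \emph{no} $3$-vertex loopless realization of $M(K_4)$ (doubled triangle or otherwise) can occur, and a balanced $K_4$ would force a balanced triangle among inner vertices, which is impossible since all three sides carry the same non-identity label. The edge count $\binom{n'}{2} + m\,n'(n-n')$ is maximized at $n' = \frac{m}{2m-1}n$ and gives Tur\'{a}n density exactly $\frac{m}{2m-1}$. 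The moral difference is that instead of trying to distribute ``forbidden'' doubled edges cleverly across a multi-class blow-up, the paper keeps one side of the construction multiplicity-one and thereby makes the dangerous small configurations structurally impossible.
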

\begin{proof}
    Let $x\in \Gamma$ be a non-identity element and let $n' := \lfloor \frac{m}{2m-1} n \rfloor$.
    Let $G$ be the subgraph of $K_n^{\Gamma}$ whose edges are
    \begin{equation*}
        \{x_{ij} : 1 \le i < j \le n'\} \cup
        \{y_{ij} : y \in \Gamma\text{ and }1 \le i \le n' < j \le n\}.
    \end{equation*}
    Then $G$ has $\binom{n'}{2} + m n' (n-n') = \frac{m^2}{4m-2} n^2 + O(mn)$ edges, and $\FM(G)$ is $M(K_4)$-free.
\end{proof}

\subsection{The non-critical pairs \texorpdfstring{$(\mathbb{Z}_4, K_4)$}{(Z4,K4)} and \texorpdfstring{$(\mathbb{Z}_2 \times \mathbb{Z}_2, K_4)$}{(Z2xZ2,K4)} : Proof of Theorem~\ref{thm intro: order 4}}

In this section, we consider the non-critical pairs $(\mathbb{Z}_4, K_4)$ and $(\mathbb{Z}_2 \times \mathbb{Z}_2, K_4)$ and show that
\begin{equation*}
    \pi(\mathbb{Z}_2 \times \mathbb{Z}_2, M(K_4)) < \pi(\mathbb{Z}_4, M(K_4)) \le \frac{2}{3}.
\end{equation*}

\begin{theorem}
    $\frac{8}{13} \le \pi(\mathbb{Z}_4, M(K_4)) \le \frac{2}{3}$.
\end{theorem}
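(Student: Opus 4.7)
For the upper bound $\pi(\mathbb{Z}_4,M(K_4))\le\frac{2}{3}$, I apply Theorem~\ref{thm:gain ESS} directly with $\Gamma'=\trivialgroup$: since $\chi(K_4)=4$ and Tur\'{a}n's theorem gives $\ex(n,K_4)=\frac{2}{3}\binom{n}{2}(1+o(1))$, we immediately obtain $\pi(\mathbb{Z}_4,M(K_4))\le\frac{\chi(K_4)-2}{\chi(K_4)-1}=\frac{2}{3}$.

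For the lower bound, my plan is to construct an $M(K_4)$-free $\mathbb{Z}_4$-gain graph on $n$ vertices with $\frac{8}{13}\cdot 4\binom{n}{2}-o(n^2)$ edges. The first ingredient is a $\mathbb{Z}_4$-analogue of Lemma~\ref{lem: Z2 realizations}: any non-looped $\mathbb{Z}_4$-gain graph whose frame matroid is $M(K_4)$ has rank~$3$ and at most $4$ vertices, and a direct case analysis (mirroring the one in Lemma~\ref{lem: Z2 realizations}) shows that, up to switching-isomorphism, the only such gain graphs are the balanced $K_4$ and the doubled triangle whose three 2-cycle gains are all equal to the order-$2$ element $2\in\mathbb{Z}_4$. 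Hence the construction must avoid both of these as gain-subgraphs.

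The construction I envision is a blow-up of a small template: partition $[n]$ into classes $V_1,\dots,V_k$ of appropriate sizes and, for each pair of classes $V_i,V_j$ (including $i=j$ for within-class edges), choose a set $S_{ij}\subseteq\mathbb{Z}_4$ of labels so that between each pair of vertices in $V_i\times V_j$ the gain graph contains one edge per element of $S_{ij}$. The blow-up is $M(K_4)$-free precisely when (i)~for every triple of (not necessarily distinct) classes, at least one of the associated label sets contains no pair $\{x,x+2\}$, blocking the doubled-triangle realization, and (ii)~for every quadruple of classes, no map $\theta\colon\{1,\dots,4\}\to\mathbb{Z}_4$ satisfies $\theta_b-\theta_a\in S_{i_a i_b}$ for all $a<b$, blocking the balanced $K_4$. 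A basic building block is that on a single class the labels $\{1,2\}\subseteq\mathbb{Z}_4$ yield an $M(K_4)$-free structure of within-class density $\frac{1}{2}$: the pair $\{1,2\}$ contains no $\{x,x+2\}$-pair, and a brief check shows no 4-vertex potential in $\mathbb{Z}_4$ has all pairwise differences in $\{1,2\}$.

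The crux lies in balancing constraints (i) and (ii). Uniform 2-label constructions cap at density $\frac{1}{2}$; upgrading some pairs to all four labels tends to introduce either doubled triangles or mixed-configuration balanced $K_4$s (for instance in a 3-in-one-class, 1-in-another pattern), recovering at best the $\frac{4}{7}$ bound of Lemma~\ref{lem: K4 lower}. To reach $\frac{8}{13}$, I plan to designate a sparse ``red'' subset of class-pairs receiving all four labels -- triangle-free in order to handle~(i), and positioned so that on every 4 classes its interaction with the remaining ``blue'' (two-label) pairs destroys every candidate potential $\theta$ -- and to optimize the class sizes together with the red/blue structure and blue label choices. Verifying that an explicit such choice is simultaneously $M(K_4)$-free and attains density $\frac{8}{13}$ will be the main technical effort.
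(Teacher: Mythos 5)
Your upper bound is exactly the paper's argument: apply Theorem~\ref{thm:gain ESS} with $\Gamma'$ trivial. The problem is the lower bound. Your general strategy -- classify the non-looped $\mathbb{Z}_4$-gain-graphic realizations of $M(K_4)$ (the balanced $K_4$ and a doubled triangle each of whose parallel pairs has gains differing by the order-two element $\alpha^2$) and then build a multipartite template whose label sets block both -- is the right kind of approach, and it is in fact the shape of the paper's extremal example. But you never exhibit a template achieving density $\frac{8}{13}$, and you say explicitly that finding and verifying one ``will be the main technical effort.'' That effort \emph{is} the lower bound; without an explicit $M(K_4)$-free $\mathbb{Z}_4$-gain graph with $\frac{16}{13}n^2-o(n^2)$ edges, the inequality $\pi(\mathbb{Z}_4,M(K_4))\ge\frac{8}{13}$ is simply not established. (A minor further imprecision: a doubled triangle all of whose $2$-cycle gains equal $\alpha^2$ realizes $M(K_4)$ only when it also contains a balanced triangle; excluding all such doubled triangles is of course sufficient for a freeness proof, but your ``classification'' as stated is slightly off.)

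Moreover, your heuristics point away from the construction that actually works, so the missing step is not a routine optimization within the framework you sketch. The paper takes $m=\lfloor\frac{4}{13}n\rfloor$, $A=[m]$, $B=[2m]\setminus[m]$, $C=[n]\setminus[2m]$, and a generator $\alpha$ of $\mathbb{Z}_4$, and keeps the edges $\alpha^2_{ij}$ for $i,j$ both in $A$ or both in $B$, the edges $1_{ij},\alpha_{ij}$ for $i\in A$, $j\in B$, and \emph{all four} labels between $A\cup B$ and $C$, with no edges inside $C$; this has $\frac{16}{13}n^2+O(n)$ edges. Freeness is quick: inside $A\cup B$ there is no balanced triangle and no parallel pair of the form $\{x,x\alpha^2\}$, and $C$ spans no edges, so a balanced $K_4$ would force a balanced triangle inside $A\cup B$, while a doubled triangle would force a parallel pair $\{x,x\alpha^2\}$ between two vertices of $A\cup B$ -- impossible since $A$--$A$ and $B$--$B$ pairs carry one label and $A$--$B$ pairs carry $\{1,\alpha\}$. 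Note that, contrary to your expectation that four-label pairs cap you near $\frac{4}{7}$ (Lemma~\ref{lem: K4 lower}), it is precisely the four-label bipartite part into an internally empty class $C$, combined with single-label classes $A,B$ and the two-label $A$--$B$ pair, that pushes the density to $\frac{8}{13}$; your within-class $\{1,2\}$ building block of density $\frac{1}{2}$ does not appear in the extremal construction.
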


\begin{proof}
    The upper bound is a direct consequence of Theorem~\ref{thm:gain ESS}.
    
    To prove the lower bound, we construct a dense $\mathbb{Z}_4$-gain subgraph $G$ of $K_n^{\mathbb{Z}_4}$ such that $\FM(G)$ is $M(K_4)$-free.
    Let $m:= \lfloor \frac{4}{13} n \rfloor$.
    Let $A = [m]$, $B = [2m]\setminus [m]$, and $C = [n]\setminus [2m]$. Let $\alpha$ be a generator of $\mathbb{Z}_4$, so $\mathbb{Z}_4 = \{1,\alpha,\alpha^2,\alpha^3\}$.
    Let $G$ be a subgraph of $K_n^{\mathbb{Z}_4}$ such that its edges are
    \begin{equation*}
        \{\alpha^2_{ij} : i,j \in A\} \cup \{\alpha^2_{ij} : i,j \in B\} \cup \{1_{ij},\alpha_{ij} : i \in A, j \in B\} \cup \{1_{ij}, \alpha_{ij}, \alpha^2_{ij}, \alpha^3_{ij} : i \in A\cup B, j \in C\}.
    \end{equation*}
    Then $G$ has $\frac{16}{13}n^2 + O(n)$ edges.
    It remains to check that $\FM(G)$ does not contain $M(K_4)$ as a submatroid.
    Suppose, contrary to our claim, that $G$ has a subgraph $H$ such that $\FM(H) \cong M(K_4)$.
    In $G[A\cup B]$, there is no balanced triangle.
    Hence, $H$ consists of three vertices, say $u,v,w$, such that $u,v\in A\cup B$ and $w\in C$.
    We may assume that $u\in A$ and $v\in B$.
    Because $\alpha^2_{uv}, \alpha^3_{uv} \notin E(G)$, it is readily seen that $H$ cannot realize $M(K_4)$.
\end{proof}

We now move our attention to $\mathbb{Z}_2 \times \mathbb{Z}_2$.
We write $\alpha$, $\beta$, and $\gamma$ for the non-identity elements of $\mathbb{Z}_2\times\mathbb{Z}_2$.
For a subgraph $G$ of $K_n^{\mathbb{Z}_2\times\mathbb{Z}_2}$ and distinct integers $i, j \in [n]$, let $\ell(ij)$ be the set of elements $x\in \mathbb{Z}_2\times\mathbb{Z}_2$ such that $x_{ij}$ is an edge of $G$. As all elements in $\mathbb{Z}_2 \times \mathbb{Z}_2$ are there own inverses, $\ell(ij) = \ell(ji)$.
An \emph{$(a,b,c)$-triangle} is a triangle on three vertices $i,j,k$ such that $|\ell(ij)| \ge a$, $|\ell(ik)| \ge b$, and $|\ell(jk)| \ge c$.

The next two lemmas give sufficient conditions on a $\mathbb{Z}_2 \times \mathbb{Z}_2$-gain graph $G$ for $\FM(G)$ to contain a $M(K_4)$-submatroid.

\begin{lemma}\label{lem: 432}
    If $G$ is a $(4,3,2)$-triangle in $K_3^{\mathbb{Z}_2\times\mathbb{Z}_2}$, then $\FM(G)$ contains a $M(K_4)$-submatroid.
\end{lemma}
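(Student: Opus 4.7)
The plan is to find a 6-edge subgraph $H$ of $G$ whose frame matroid is isomorphic to $M(K_4)$. By Lemma~\ref{lem: Z2 realizations}, it suffices to exhibit an $H$ that is switching-equivalent to the doubled triangle $C_3^{\mathbb{Z}_2}$ on the three vertices of $G$.

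Labeling the vertices $i, j, k$, I would first perform an initial sequence of switchings (first at $k$, then at $i$) to arrange that $1 \in \ell(ij) \cap \ell(ik) \cap \ell(jk)$. Then $\ell(ij) = \mathbb{Z}_2 \times \mathbb{Z}_2$ in full, $\ell(ik) = \{1, p, q\}$ for some distinct non-identity $p, q$, and $\ell(jk) = \{1, r\}$ for some non-identity $r$. The key arithmetic input will be that in $\mathbb{Z}_2 \times \mathbb{Z}_2$ the elements $p$, $q$, and $pq$ are pairwise distinct non-identity and so exhaust the non-identity part of the group. In particular $r$ coincides with one of them, which lets me pick a 2-subset $\{t_1, t_2\} \subseteq \ell(ik)$ with $t_1 t_2 = r$: take $\{1, r\}$ if $r \in \{p, q\}$, and $\{p, q\}$ if $r = pq$.

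Next, I would set $\{s_1, s_2\} := \{t_1, t_2\}$ (which is available inside $\ell(ij)$) and form the 6-edge subgraph $H$ with labels $\{s_1, s_2\}, \{t_1, t_2\}, \{1, r\}$ on the pairs $ij, ik, jk$. In the first case ($r \in \{p, q\}$) the labels of $H$ already lie in $\langle r\rangle$; in the second case ($r = pq$), I would perform one further switching at $i$ by the element $p$, which sends $\{p, q\}$ to $\{1, r\}$ on both $ij$ and $ik$ and leaves $\ell_H(jk)$ unchanged. In either case $H$ becomes (after switching) a $\mathbb{Z}_2$-gain graph isomorphic to $C_3^{\mathbb{Z}_2}$, and Lemma~\ref{lem: Z2 realizations} delivers $\FM(H) \cong M(K_4)$.

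The main obstacle is the bookkeeping across the two cases $r \in \{p, q\}$ and $r = pq$; everything rests on the fact that $\mathbb{Z}_2 \times \mathbb{Z}_2$ is a Klein four-group, so every element is its own inverse and the products of the three 2-subsets of $\{1, p, q\}$ cover all three non-identity elements, which is exactly what is needed to match the pair $ij$ and the pair $ik$ to the forced ratio $r$ on the pair $jk$.
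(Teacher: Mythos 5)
Your argument is correct and takes essentially the same approach as the paper: both proofs switch so as to exhibit a six-edge doubled-triangle subgraph whose gains lie in a single order-two subgroup (a copy of $C_3^{\mathbb{Z}_2}$), then invoke the fact that its frame matroid is $M(K_4)$. The paper avoids your two-case split by making the two switchings in a cleverer order: it first normalizes $\ell(23)=\{1,x\}$, then observes that since the $3$-element set $\ell(13)$ necessarily contains one of the two cosets of $\langle x\rangle$, a single switch at vertex~$1$ forces $\{1,x\}\subseteq\ell(13)$, while $\ell(12)$ is automatically the full group. Your version reaches the same endpoint with a bit more bookkeeping (normalizing all three lists to contain $1$ first, then distinguishing whether $r\in\{p,q\}$ or $r=pq$), but the underlying mechanism — every element of the Klein four-group is its own inverse and the product of two distinct non-identity elements is the third — is identical.
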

\begin{proof}
    By relabeling (and deleting edges if necessary), we may assume that $|\ell(12)| = 4$, $|\ell(13)| = 3$, and $|\ell(23)| = 2$.
    By switching at $2$, we may assume that $1 \in \ell(23)$. Suppose that $\ell(23) = \{1,x\}$; by switching at $1$ we may assume that $\{1,x\} \subseteq \ell(13)$. But then $E(G) \supseteq \{1_{12},1_{13},1_{23},x_{12},x_{13},x_{23}\}$ and so $\FM(G)$ contains $M(K_4)$ as a submatroid.
\end{proof}

\begin{lemma}\label{lem: 233332}
    Let $G$ be a gain subgraph of $K_4^{\mathbb{Z}_2\times\mathbb{Z}_2}$ such that the multiplicities of edges $|\ell(12)|, |\ell(34)| \ge 2$, and $|\ell(13)|, |\ell(14)|, |\ell(23)|, |\ell(24)| \ge 3$.
    Then $M(G)$ contains an $M(K_4)$-submatroid.
\end{lemma}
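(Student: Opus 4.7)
My plan is to exhibit a balanced copy of $K_4$ inside $G$: such a subgraph has frame matroid $M(K_4)$, so $\FM(G)$ will contain $M(K_4)$. A balanced $K_4$-subgraph amounts to a tuple $(g_1,g_2,g_3,g_4)\in\Gamma^4$ with $g_ig_j\in\ell(ij)$ for all pairs $i<j$; by the translation symmetry $(g_i)\mapsto(hg_i)$, I may fix $g_1=1$.

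First, I would reduce to $|\ell(ij)|\le 3$ for every pair: if some $|\ell(ij)|=4$, then the triangle containing edge $(i,j)$ is a $(4,3,\ge 2)$-triangle and Lemma~\ref{lem: 432} applies. After this reduction, switching at vertices $2,3,4$ lets me assume $1\in\ell(12)\cap\ell(13)\cap\ell(14)$, and I parameterize $\ell(13)=\Gamma\setminus\{w\}$, $\ell(14)=\Gamma\setminus\{w'\}$ (with $w,w'\ne 1$), $\ell(23)=\Gamma\setminus\{z\}$, $\ell(24)=\Gamma\setminus\{z'\}$. For each candidate $g_2\in\ell(12)$, setting
\[
E(g_2):=\ell(13)\cap g_2\ell(23)=\Gamma\setminus\{w,g_2z\},\qquad F(g_2):=\ell(14)\cap g_2\ell(24)=\Gamma\setminus\{w',g_2z'\},
\]
the task reduces to finding $(g_3,g_4)\in E(g_2)\times F(g_2)$ with $g_3g_4\in\ell(34)$.

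The main tools are two elementary product-set facts in $\Gamma=\mathbb{Z}_2\times\mathbb{Z}_2$: (a) if $A,B\subseteq\Gamma$ satisfy $|A|,|B|\ge 2$ and $|A|+|B|\ge 5$, then $A\cdot B=\Gamma$; and (b) every two-element subset $\{s_1,s_2\}\subseteq\Gamma$ is a coset of the $2$-subgroup $\{1,s_1s_2\}$, and two $2$-element cosets of distinct $2$-subgroups have product set $\Gamma$. If $g_2\in\{wz,w'z'\}\cap\ell(12)$ then $|E(g_2)|\ge 3$ or $|F(g_2)|\ge 3$, so (a) yields $E(g_2)F(g_2)=\Gamma$; likewise if $|\ell(34)|=3$ the intersection $E(g_2)F(g_2)\cap\ell(34)$ is automatic by inclusion-exclusion. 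Otherwise $|E(g_2)|=|F(g_2)|=|\ell(34)|=2$; using that the product of all four elements of $\Gamma$ equals $1$, $E(g_2)$ is a coset of $\{1,wzg_2\}$ and $F(g_2)$ a coset of $\{1,w'z'g_2\}$, so if $wz\ne w'z'$ these subgroups differ and (b) again gives $E(g_2)F(g_2)=\Gamma$.

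The remaining subcase $wz=w'z'=:h$ is the most delicate: now $E(g_2),F(g_2)$ are cosets of the single $2$-subgroup $K(g_2):=\{1,hg_2\}$, so $E(g_2)F(g_2)$ is a single coset of $K(g_2)$. Letting $m$ denote the product of the two elements of $\ell(34)$, the intersection $E(g_2)F(g_2)\cap\ell(34)$ can be empty only when $\ell(34)$ equals the coset of $K(g_2)$ opposite to $E(g_2)F(g_2)$, which requires $m=hg_2$, that is, $g_2=mh$. This pins at most one ``bad'' value of $g_2$; since $|\ell(12)|\ge 2$, some $g_2\in\ell(12)$ avoids it and delivers the desired balanced $K_4$. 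The main obstacle will be the final bookkeeping in the subgroup structure of $\mathbb{Z}_2\times\mathbb{Z}_2$---in particular, checking that the various forbidden values $wz$, $w'z'$, and $mh$ do not together exhaust the at-least-two elements of $\ell(12)$, which follows from the preceding case reductions.
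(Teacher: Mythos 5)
Your proof is correct, and it takes a genuinely different route from the paper's.

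The paper normalizes by deleting edges so that all multiplicities are exactly $2,3,3,3,3,2$, switches so that $\ell(13)=\ell(14)=\ell(23)=\{1,\alpha,\beta\}$, and then splits on the two essentially distinct possibilities for $\ell(24)$, finishing each with further switching and inspection. Your argument instead exploits the algebraic structure of $\mathbb{Z}_2\times\mathbb{Z}_2$: fixing $g_1=1$ and normalizing $1\in\ell(12)\cap\ell(13)\cap\ell(14)$, you turn the existence of a balanced $K_4$ into a product-set question about the two-element and three-element subsets $E(g_2),F(g_2),\ell(34)$, and then use two clean facts (sets of total size $\ge 5$ have product $\Gamma$; two-element sets are cosets of two-subgroups, and cosets of distinct two-subgroups also have product $\Gamma$). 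This collapses the case analysis to identifying a single possible bad value $g_2=mh$, which the hypothesis $|\ell(12)|\ge 2$ lets you sidestep. Two small points worth making explicit in a write-up: the observation that the product of the two elements of $\ell(34)$ satisfies $m\ne 1$ (since $s\ne t$ and every element of $\mathbb{Z}_2\times\mathbb{Z}_2$ is its own inverse), so $mh\ne h$ and the bad value never coincides with the already-handled value $h$; and that the reduction to all multiplicities $\le 3$ via Lemma~\ref{lem: 432} is correct because every edge with multiplicity $4$ lies in a triangle whose other two multiplicities are at least $3$ and $2$. Your approach trades the paper's explicit casework for a more structural argument; the paper's version is more self-contained, while yours makes the role of the coset structure of $\mathbb{Z}_2\times\mathbb{Z}_2$ transparent.
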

\begin{proof}
    By deleting edges if necessary, we may assume that all inequalities hold with equality.
    By switching-equivalence, we may assume that $\ell(13) = \ell(14) = \ell(23) = \{1,\alpha,\beta\}$.
    By symmetry, we may assume that $\ell(24)$ is $\{1,\alpha,\beta\}$, $\{1,\alpha,\gamma\}$, or $\{\alpha,\beta,\gamma\}$.
    In the last case, by switching at $1$ and $2$ by $\alpha$, we have that $\ell(13) = \ell(14) = \ell(23) = \{1,\alpha,\gamma\}$ and $\ell(24) = \{1,\gamma,\beta\}$, which can be reduced to the second case by an automorphism of $\mathbb{Z}_2\times\mathbb{Z}_2$.
    Therefore, we may assume that either $\ell(24) = \{1,\alpha,\beta\}$ or $\{1,\alpha,\gamma\}$.

    \emph{Case I: $\ell(24) = \{1,\alpha,\beta\}$.}
    Suppose that $1\in \ell(12)$.
    Then we may assume that $1\notin \ell(34)$.
    By symmetry, we may assume that $\alpha\in \ell(34)$.
    By switching at $3$ by $\alpha$, we deduce that $G$ contains $K_4$.
    Therefore, we may assume that $1\notin \ell(12)$ and similarly $1\notin \ell(34)$.
    If $\alpha\in \ell(12) \cap \ell(34)$, then by switching at $1$ and $3$ by $\alpha$, we see that $G$ contains $K_4$.
    By symmetry, we may assume that $\alpha\notin \ell(34) = \{\beta,\gamma\}$.
    Similarly, we may assume that $\beta \notin\ell(12) = \{\alpha,\gamma\}$.
    By switching at $1$ by $\gamma$, at $3$ by $\alpha$, and at $4$ by $\beta$, we deduce that $G$ contains $K_4$.

    \emph{Case II: $\ell(24) = \{1,\alpha,\gamma\}$.}
    Following a similar approach as in Case I, we can assume that $1\notin \ell(12)$ and $1\notin \ell(23)$.
    We can further assume that $\ell(12)=\{\alpha,\gamma\}$ and $\ell(34)=\{\beta,\gamma\}$.
    Then by switching at $1$ by $\alpha$ and at $4$ by $\gamma$, we see that $G$ contains $K_4$.
\end{proof}

\begin{theorem}
    $\frac{4}{7} \le \pi(\mathbb{Z}_2 \times \mathbb{Z}_2, M(K_4)) \le \frac{7}{12}$.
\end{theorem}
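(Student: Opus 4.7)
The lower bound $\frac{4}{7} \le \pi(\mathbb{Z}_2\times\mathbb{Z}_2, M(K_4))$ is immediate: Lemma~\ref{lem: K4 lower} applied with $|\Gamma|=4$ yields an $M(K_4)$-free $\mathbb{Z}_2\times\mathbb{Z}_2$-gain graph on $n$ vertices with $\tfrac{8}{7}n^2 + O(n)$ edges, which corresponds to Tur\'{a}n density at least $\tfrac{4}{7}$.

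For the upper bound, my plan is to fix a $\mathbb{Z}_2\times\mathbb{Z}_2$-gain subgraph $G$ of $K_n^{\mathbb{Z}_2\times\mathbb{Z}_2}$ whose frame matroid is $M(K_4)$-free, set $d_{ij}=|\ell(ij)|$, and establish the triangle-averaging inequality
\[
\sum_{T} \sigma(T) \le 7\binom{n}{3} + o(n^3), \qquad \text{where } \sigma(T) := d_{ij}+d_{ik}+d_{jk} \text{ for } T=\{i,j,k\}.
\]
Combined with the double-counting identity $(n-2)|E(G)| = \sum_T \sigma(T)$, this yields $|E(G)| \le \tfrac{7}{3}\binom{n}{2} + o(n^2)$ and hence density at most $\tfrac{7}{12}$.

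The argument would proceed by classifying each triple $T$ according to the sorted multiplicity profile $(a,b,c)$ of its three pairs. Lemma~\ref{lem: 432} rules out every profile dominating $(4,3,2)$. A doubled-triangle analysis mirroring that proof but specific to $\mathbb{Z}_2\times\mathbb{Z}_2$---based on the switching-invariant two-cycle label attached to each multiplicity-$\ge 2$ pair---would additionally rule out $(3,3,3)$ and $(3,3,2)$ outright, and rule out $(3,2,2), (4,2,2), (2,2,2)$ whenever all the multiplicity-$2$ pairs of the triangle share a common two-cycle label, since each such configuration gives a $C_3^{\mathbb{Z}_2}$-realization of $M(K_4)$. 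After these restrictions, the only surviving profiles with $\sigma\ge 8$ are $(4,4,1)$ (with $\sigma=9$) together with $(4,4,0), (4,3,1)$, and $(4,2,2)$-distinct (each with $\sigma=8$). The number of such exceptional triples is then bounded by applying Lemma~\ref{lem: 233332} to appropriate 4-vertex extensions: a 4-vertex set in which two exceptional triples share a multiplicity-$\ge 3$ pair would match the hypothesis of Lemma~\ref{lem: 233332} and produce a forbidden $M(K_4)$-submatroid.

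I expect the main obstacle to be this 4-vertex extension analysis: matching the mixed multiplicity hypotheses of Lemma~\ref{lem: 233332} (two diagonals of multiplicity $\ge 2$ and four $4$-cycle edges of multiplicity $\ge 3$) requires careful tracking of two-cycle labels up to switching, and the bookkeeping that balances the positive $\sigma$-excess of the exceptional $(4,4,1)$-triples (overshooting the target average $7$ by $2$) against the negative contributions of triples with $\sigma\le 6$ is the most delicate piece of the argument.
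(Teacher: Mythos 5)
Your lower bound is exactly the paper's (Lemma~\ref{lem: K4 lower} with $m=4$), and your averaging framework $(n-2)|E(G)|=\sum_T\sigma(T)$ is arithmetically consistent with the target density $\tfrac{7}{12}$. The gap is in the triple classification that the whole plan rests on. The claim that a ``doubled-triangle analysis'' rules out $(3,3,3)$ and $(3,3,2)$ profiles outright is false. Any $M(K_4)$ on three loopless vertices must be a doubled triangle in which the two labels on each pair have the same ``difference'' $d$ and some transversal is balanced. Take $\ell(12)=\ell(13)=\ell(23)=\{\alpha,\beta,\gamma\}$ (all three non-identity elements of $\mathbb{Z}_2\times\mathbb{Z}_2$): for each $d$ the only $2$-subset with difference $d$ is the same on all three pairs, say $\{x,y\}$ with $xy=d$, and then every transversal gain lies in $\{x,y\}\not\ni 1$, so this $(3,3,3)$-triangle (with $\sigma=9$) is $M(K_4)$-free; a similar computation kills the $(3,3,2)$ claim (e.g.\ $\ell(12)=\{1,\beta,\gamma\}$, $\ell(23)=\{1,\alpha\}$, $\ell(13)=\{1,\alpha,\beta\}$). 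So your list of surviving profiles with $\sigma\ge 8$ is wrong, and these configurations can only be excluded through interactions on four or more vertices, not triple-locally.

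The second, more structural problem is the plan to ``bound the number of exceptional triples'' via Lemma~\ref{lem: 233332}. That cannot work even in principle: the $M(K_4)$-free construction of Lemma~\ref{lem: K4 lower} itself contains $\Theta(n^3)$ triples of profile $(4,4,1)$ and $(4,4,0)$ (two vertices outside the clique part, or two inside, plus one on the other side), and in the resulting $4$-vertex sets the low-multiplicity pairs sit where Lemma~\ref{lem: 233332} needs multiplicity at least $3$, so no such lemma applies -- correctly, since these graphs contain no $M(K_4)$. Hence the excess of $\sigma\ge 8$ triples must be traded globally against triples with $\sigma\le 6$, and your proposal supplies no mechanism for that bookkeeping; it is not a routine finishing step but the entire difficulty. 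The paper avoids local averaging altogether: it proves $|E(G)|\le\tfrac{7}{6}n^2+100n$ by induction on $n$, either deleting a $2$-, $3$-, or $4$-vertex set into which every outside vertex sends few edges, or else escalating to denser triangles and quadruples (a $(3,3,2)$-, $(4,2,2)$-, or $(4,3,1)$-triangle, then a fourth or fifth vertex with many edges into it) until Lemma~\ref{lem: 432} or Lemma~\ref{lem: 233332} produces an $M(K_4)$. As written, your upper-bound argument has a genuine gap at both the classification step and the global counting step.
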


\begin{proof}
    The lower bound follows from Lemma~\ref{lem: K4 lower}.
    Thus, it remains  to show the upper bound.
    Let $M$ be a submatroid of $Q_n(\mathbb{Z}_2\times\mathbb{Z}_2)$ without $M(K_4)$-restriction, and
    let $G$ be a subgraph of $K_n^{\mathbb{Z}_2\times \mathbb{Z}_2}$ such that $\FM(G) = M$.
    We claim that
    \begin{equation}\label{eq: Z2xZ2 K4 main}
        E(G) \le \frac{7}{6}n^2 + 100n,
        \qquad\text{for all $n \ge 1$.}
    \end{equation}
    If $n\le 4$, then $\frac{7}{6}n^2 + 100n \ge 4 \binom{n}{2} + 4 = |Q_n(\mathbb{Z}_2\times\mathbb{Z}_2)|$, so~\eqref{eq: Z2xZ2 K4 main} holds trivially. We proceed by induction on $n\ge 5$.

    By Theorem~\ref{thm: Mantel Gamma}, we may assume that $M$ contains a $U_{2,3}$-submatroid.
    Then $G$ contains a balanced triangle, a theta-graph with two vertices, or a handcuff with two vertices. We show that $G$ contains a $(1,1,1)$-triangle; this is obvious in the case that $G$ contains a balanced triangle, so we may assume that $G$ contains a theta-graph or handcuff on two vertices. Let $u$ and $v$ be the two vertices.
    
    We may assume that there is a vertex $w\in V(G)\setminus \{u,v\}$ such that $w$ is incident with at least $5$ edges into $\{u,v\}$, since otherwise $G$ has at most
    \begin{equation*}
        |E(G\setminus\{u,v\})| + 4(n-2) + |E(G[\{u,v\}])|
        \le 
        \frac{7}{6}(n-2)^2+100(n-2) + 4(n-2) + 4\binom{2}{2}+2 < \frac{7}{6}n^2 + 100n
    \end{equation*}
    edges by the induction hypothesis.
    Then $G$ has a triangle because the multiplicity of parallel edges is at most $4$.

    The preceding argument shows that $G$ has a $(1,1,1)$-triangle.
    We denote its vertices by $u$, $v$, and $w$.
    We may assume that there is $x\in V(G)\setminus\{u,v,w\}$ such that $x$ is incident with at least $8$ edges into $\{u,v,w\}$, since otherwise $G$ has at most $\frac{7}{6}(n-3)^2 + 100(n-3) + 7(n-3) + 4\binom{3}{2}+3 < \frac{7}{6}n^2 + 100n$ edges by the induction hypothesis.
    
    Then $G$ contains a $(3,3,1)$-triangle or a $(4,2,1)$-triangle.
    Similarly, we may assume that there is a vertex outside of such a triangle such that it has at least $8$ edges into the triangle.
    Hence $G$ contains a $(3,3,2)$-, $(4,2,2)$-, or $(4,3,1)$-triangle.
    
    We relabel our vertices $u,v,w$ as the vertices of such a triangle.

    \emph{Case I: $\{u,v,w\}$ induces a $(3,3,2)$-triangle.}
    By relabelling we may assume that the multiplicities of edges $uv, vw, wu$ are at least $3,3,2$, respectively.
    We may assume that there is a vertex $x\in V(G)\setminus\{u,v,w\}$ having at least $8$ edges into the triangle $uvw$.
    By a straightforward case analysis and using Lemma~\ref{lem: 432}, we may assume that the multiplicities of $xu$, $xv$, $xw$ are $2$ or $3$.
    By Lemma~\ref{lem: 233332} and symmetry, we may assume that the multiplicities of $xu$, $xv$, $xw$ are $3$, $3$, $2$, respectively.
    Then $\{x,u,v\}$ induces a $(3,3,3)$-triangle.
    We may assume that there is a vertex $y\in V(G)\setminus\{x,u,v\}$ having at least $8$ edges into $\{x,u,v\}$.
    Then by Lemma~\ref{lem: 432}, the multiplicities of $yx$, $yu$, $yv$ are $2$ or $3$, and by Lemma~\ref{lem: 233332}, $M$ contains a $M(K_4)$-submatroid, a contradiction.

    \emph{Case II: $\{u,v,w\}$ induces a $(4,2,2)$-triangle.}
    By relabelling we may assume that the multiplicity of $uv$ is $4$.
    We can assume that there is a vertex $x \in V(G) \setminus\{u,v,w\}$ such that $x$ has at least $8$ edges into the triangle $\{u,v,w\}$.
    By Lemma~\ref{lem: 432}, we may assume that the multiplicity of $xw$ is 4 and the multiplicities of $xu$ and $xv$ are two.
    If each vertex $y \in V(G) \setminus \{u,v,w,x\}$ has at most $9$ edges into $\{u,v,w,x\}$, then $G$ has at most $|E(G\setminus\{u,v,w,x\})| + 9(n-4) + |E(G[\{u,v,w,x\}])| \le \frac{7}{6}(n-4)^2 + 100(n-4) + 9(n-4) + 4\binom{4}{2}+4 < \frac{7}{6}n^2 + 100n$ edges by the induction hypothesis.
    Thus, we can assume that there is $y$ that has at least $10$ edges into $\{u,v,w,x\}$.
    Then $G$ contains a $(4,3,2)$-triangle, and thus by Lemma~\ref{lem: 432}, $M$ has a $M(K_4)$-submatroid, a contradiction.

    \emph{Case III: $\{u,v,w\}$ induces a $(4,3,1)$-triangle.}
    By relabelling we may assume that the multiplicities of $uv, vw, wu$ are $4,3,1$, respectively.
    As before, we may assume that $G$ has a vertex $x$ outside the triangle such that $x$ has at least $8$ edges into the triangle.
    By Lemma~\ref{lem: 432} and simple analysis, we may assume that the multiplicities of $xu$ and $xw$ are at least $3$ and one of them is $4$.
    Thus, the vertices $u,x,w$ form a $(4,3,1)$-triangle.
    We may assume there is a vertex $y \in V(G) \setminus \{u,v,w,x\}$ such that $y$ is incident with at least $10$ edges into $\{u,v,w,x\}$, since otherwise $|E(G)| \le \frac{7}{6}(n-4)^2 + 100(n-4) + 9(n-4) + 4\binom{4}{2}+4 < \frac{7}{6}n^2 + 100n$. 
    By case analysis, Lemma~\ref{lem: 432}, and symmetry, we may assume that one of the following subcases holds:
    \begin{itemize}
        \item The multiplicities of $yu$ and $yw$ are $4$ and the multiplicities of $yv$ and $yx$ are at least $1$. In this case, by switching at $u$, $v$ and $w$ if necessary, it can be seen that $G[\{u,v,w,y\}]$ contains $K_4$, a contradiction.
        \item The multiplicities of $yu$ and $yw$ are at least $1$, and the multiplicity of $yv$ and $yx$ are $4$. In this case, by switching at $x$, $v$, and $y$ if necessary, it can be seen that $G[\{u,v,x,y\}]$ contains $K_4$, a contradiction.
        \item The multiplicities of $xu$, $xw$, $yu$, $yv$, $yw$, and $yx$ are at least $4$, $3$, $1$, $3$, $3$, and $3$, respectively. In this case, $\{v,w,y\}$ induces a $(3,3,3)$-triangle, and we have reduced the problem to Case I. \qedhere
    \end{itemize}
\end{proof}

\section{Extremal numbers of large cliques in \texorpdfstring{$Q_n(\mathbb{Z}_2)$}{Qn(Z2)}}\label{sec: excluding large Kt}

\subsection{The extremal number of \texorpdfstring{$M(K_4)$}{M(K4)} in \texorpdfstring{$Q_n(\mathbb{Z}_2)$}{Qn(Z2)}}

We explicitly compute $\ex(Q_n(\mathbb{Z}_2), M(K_4))$ and find the extremal matroids.

Let $H_{a,b}$ be a $\mathbb{Z}_2$-gain graph on the vertex set $[a+b]$ such that its edge set is 
\begin{align*}
    \{b_{i}: 1\le i\le a\} \cup 
    \{(-1)_{i,j}: 1\le i<j\le a\} \cup 
    \{1_{i,j}, -1_{i,j}: 1\le i\le a < j \le a+b\}.
\end{align*}
In $H_{a,b}$, the number of $3$-edge loose handcuffs is $\binom{a}{2}$, the number of $3$-edge tight handcuffs is $ab$, and the number of balanced triangles is $2\binom{a}{2}b$. Thus, the matroid $\FM(H_{a,b})$ has $\binom{a}{2} + a^2b$ triangles.

\begin{theorem}\label{thm: Z2 K4}
    $\ex(Q_n(\mathbb{Z}_2),M(K_4)) = \left\lfloor \frac{2}{3}n^2 + \frac{1}{3}n \right\rfloor$ for $n\ge 3$.
    The extremal matroids are the frame matroids induced by $K_{4}^{\mathbb{Z}_2} \setminus \{1_{1,2},1_{3,4},(-1)_{1,2},(-1)_{3,4}\}$ if $n = 4$ and induced by the two infinite families 
    \begin{itemize}
        \item $H_{n-m,m}$ with $m=\lfloor n/3 \rfloor$ for any $n \ge 3$,
        \item $H_{n-m,m}$ with $m=\lfloor n/3 \rfloor + 1$ if $3|(n-2)$.
    \end{itemize}
\end{theorem}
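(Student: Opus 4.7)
The plan has three components: a lower bound from the construction $H_{n-m,m}$, a matching upper bound by induction on $n$, and a characterization of extremal matroids obtained by tracing the equality cases. Throughout, I will repeatedly invoke Lemma~\ref{lem: Z2 realizations}, which reduces $M(K_4)$-freeness of a gain graph to the absence of any subgraph switching-isomorphic to one of the four realizations $K_4$, $K_3^1$, $H_3$, or $C_3^{\mathbb{Z}_2}$.

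For the lower bound, I will compute $|E(H_{a,b})|=a+\binom{a}{2}+2ab$ which, after substituting $b=n-a$, equals $\tfrac12 a(4n-3a+1)$. Maximizing over integer $a$ confirms that $a=n-\lfloor n/3\rfloor$ is always a maximizer, and that a second integer maximizer $a=n-\lfloor n/3\rfloor-1$ appears exactly when $n\equiv 2\pmod 3$, explaining the two infinite families in the statement. I then verify that $\FM(H_{a,b})$ is $M(K_4)$-free by ruling out each of the four realizations: triangles inside the first $a$ vertices have gain $(-1)^3=-1$ and so are unbalanced, ruling out $K_4$ and $K_3^1$; doubled edges of $H_{a,b}$ appear only across the bipartition, ruling out $C_3^{\mathbb{Z}_2}$; and the center of an $H_3$ would need both a loop and two doubled edges at the same vertex, forcing its two leaves into the second part where they are non-adjacent. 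The $n=4$ exceptional matroid will be handled by a short direct check along the same lines.

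For the upper bound I will induct on $n\ge 3$, with $n=3$ handled by inspection. Set $f(n)=\lfloor(2n^2+n)/3\rfloor$ and note that $f(n)-f(n-1)$ equals either $\lfloor(4n-1)/3\rfloor$ or $\lceil(4n-1)/3\rceil$, mirroring the vertex-degree in the extremal construction. Let $G\subseteq K_n^{\mathbb{Z}_2}$ be $M(K_4)$-free with $|E(G)|$ maximal, where each loop contributes $2$ to the degree of its vertex. I will show that $G$ has a vertex of degree at most $f(n)-f(n-1)$; deleting such a vertex yields a smaller $M(K_4)$-free gain graph to which the induction hypothesis applies. If every vertex has strictly higher degree, then handshaking already forces $|E(G)|>f(n)$, and a careful local analysis using Lemma~\ref{lem: Z2 realizations} at any high-degree vertex must produce one of the four realizations, contradicting $M(K_4)$-freeness.

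The main obstacle will be the local case analysis at a high-degree vertex. The edges at a vertex $v$ partition into a possible loop, doubled edges to a set $D_v$, and single edges to a set $S_v$, and one must simultaneously exclude $K_4$, $K_3^1$, $H_3$, and $C_3^{\mathbb{Z}_2}$ from the neighbourhood. A loop at $v$ together with a doubled edge from $v$ to some $u$ already yields a tight handcuff, which threatens to complete to $H_3$ once $u$ has another neighbour; two doubled edges from $v$ that share an endpoint complete to $C_3^{\mathbb{Z}_2}$; and any balanced triangle of single edges with a loop at each vertex gives $K_3^1$. Tracking equality through these constraints should simultaneously identify the tight degree threshold and pin down the extremal matroids, with the small exception at $n=4$ arising because the inductive bound carries a little extra slack there.
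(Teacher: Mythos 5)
Your lower-bound analysis is sound and matches the paper's construction: the edge count $|E(H_{a,b})|=a+\binom{a}{2}+2ab=\tfrac12 a(4n-3a+1)$, the optimization explaining the two families (and the second maximizer exactly when $3\mid(n-2)$), and the exclusion of the four realizations from Lemma~\ref{lem: Z2 realizations} are all correct. The gap is in the upper bound. Your plan is single-vertex deletion: either some vertex has degree at most $f(n)-f(n-1)$ (delete and induct), or every degree exceeds this threshold, in which case you assert that ``a careful local analysis at any high-degree vertex must produce one of the four realizations.'' That assertion is the entire difficulty and is not something a local analysis can deliver: all four realizations $K_4$, $K_3^1$, $H_3$, $C_3^{\mathbb{Z}_2}$ require edges \emph{among} the neighbours of the vertex (balanced triangles, a leaf--leaf edge, a doubled triangle), whereas high degree at a single vertex only constrains the multiset of loops, doubled and single edges incident with it. Indeed, in the extremal graph $H_{a,b}$ the part-one vertices have degree about $\tfrac{4n}{3}+1$ and no forbidden configuration is present, so no neighbourhood-of-one-vertex argument can succeed; some global input is required. (There is also an arithmetic wrinkle: for $n\equiv 1\pmod 3$ the handshake bound with minimum degree $f(n)-f(n-1)+1$ gives only $|E(G)|\ge f(n)$, not a strict inequality --- and in any case $|E(G)|>f(n)$ is not by itself a contradiction, since that is precisely what is being ruled out.)

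The paper supplies exactly the missing global step. It first invokes its Mantel-type result (Theorem~\ref{thm: Mantel Z2}) to guarantee a triangle of the matroid; if $G$ has no \emph{balanced} triangle, the triangle lives on two vertices $u,v$, each other vertex sends at most two edges into $\{u,v\}$, and deleting $u,v$ plus induction gives a count strictly below $f(n)$. Otherwise there is a balanced triangle on $u,v,w$, each outside vertex sends at most $4$ edges into $\{u,v,w\}$ (more would create one of the four realizations), the triangle spans at most $7$ edges, and deleting the three vertices plus induction gives exactly $\tfrac23 n^2+\tfrac13 n$. You would need these (or equivalent) structural lemmas to close your ``no low-degree vertex'' case. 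Finally, the characterization of the extremal matroids (including the exceptional example at $n=4$ and the base cases $n=3,4,5$) is likewise only asserted in your proposal; in the paper it is a separate induction that traces equality through the triangle counts, and it does not fall out automatically from the degree bookkeeping.
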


For each $k\ge 1$, the matroids $\FM(H_{2k+2,k})$ and $\FM(H_{2k+1,k+1})$ are non-isomorphic because they have different numbers of triangles.
Also, $\FM(H_{3,1})$ and $\FM(K_{4}^{\mathbb{Z}_2} \setminus \{1_{1,2},1_{3,4},(-1)_{1,2},(-1)_{3,4}\})$ are non-isomorphic because the latter matroid has four $U_{2,4}$-submatroids, while the former has none.

\begin{proof}
    The gain graph $H_{a,b}$ has no subgraph switching-isomorphic to any of the four graphs in Figure~\ref{fig:Z2 K4}.
    The number of edges in $H_{n-\lfloor n/3 \rfloor, \lfloor n/3 \rfloor}$ is $\left\lfloor \frac{2}{3}n^2 + \frac{1}{3}n \right\rfloor$.
    Thus, $\ex(Q_n(\mathbb{Z}),M(K_4)) \ge \left\lfloor \frac{2}{3}n^2 + \frac{1}{3}n \right\rfloor$.

    Next, we demonstrate the upper bound.
    Let $G$ be a $\mathbb{Z}_2$-gain subgraph of $K_n^{\mathbb{Z}_2}$ that has no subgraph switching-isomorphic to $K_4$, $K_3^1$, $H_3$, or $C_3^2$.
    It suffices to prove that $|E(G)| \le \left\lfloor \frac{2}{3}n^2 + \frac{1}{3}n \right\rfloor$.

    We proceed by induction on $n$.
    By simple analysis, we can show that the bound is tight for $n\in\{3,4,5\}$.
    Thus, we may assume that $n\ge 6$.
    By \Cref{thm: Mantel Z2}, we may assume that $M$ has a triangle, say $T$.
    Suppose that $G$ has no balanced triangle.
    Then $T$ corresponds to a theta graph or a (loose or tight) handcuff in $G$, on exactly two vertices, say $u$ and $v$.
    By the induction hypothesis, $G\setminus\{u,v\}$ has at most $\frac{2}{3}(n-2)^2 + \frac{1}{3}(n-2)$ edges.
    For every vertex $w$ other than $u,v$, there are at most two edges between $w$ and $\{u,v\}$ since $G$ has no balanced triangle.
    Then
    \begin{equation}
        |E(G)| 
        \le 
        \frac{2}{3}(n-2)^2 + \frac{1}{3}(n-2) + 2(n-2) + 4 
        =
        \frac{2}{3}n^2 - \frac{1}{3} n + 2
        < 
        \left\lfloor \frac{2}{3}n^2 + \frac{1}{3}n \right\rfloor,
        \label{eq: Z2 K4 I}
    \end{equation}
    a contradiction.
    Thus, we can assume that $T$ corresponds to a balanced triangle in $G$, and let $u,v,w$ be the vertices in such a triangle.
    Then $G\setminus\{u,v,w\}$ has at most $\frac{2}{3}(n-3)^2 + \frac{1}{3}(n-3)$ edges, and for each $x\in V(G)\setminus\{u,v,w\}$, it has at most four edges into $\{u,v,w\}$.
    Also, $\{u,v,w\}$ induces at most seven edges.
    Thus, 
    \begin{equation}
        |E(G)|
        \le 
        \frac{2}{3}(n-3)^2 + \frac{1}{3}(n-3) + 4(n-3) + 7
        =
        \frac{2}{3}n^2 + \frac{1}{3}n. 
        \label{eq: Z2 K4 II}
    \end{equation}
    As $|E(G)|$ is an integer, we conclude the desired upper bound.

    Finally, we show the extremal examples by induction on $n\ge 3$.
    Let $G$ be a subgraph of $K_n^{\mathbb{Z}_2}$ that has $\lfloor \frac{2}{3}n^2 + \frac{1}{3}n \rfloor$ edges and has no subgraph switching-isomorphic to $K_4$, $K_3^1$, $H_3$, or $C_3^2$.
    For $n=3$, it is readily checked that $G$ is switching-isomorphic to $H_{1,2}$, which has $7$ edges.
    Next, suppose that $n=4$.
    Then $G$ has $12$ edges, so $G$ is $K_4^{\mathbb{Z}_2}$ minus four edges.
    If $G$ has at most two loops, then $G$ has a balanced $\{C_3^2, H_3\}$-copy, a contradiction.
    One can check that $G$ is switching-isomorphic to $H_{3,1}$ if it has exactly three loops, and $G$ is switching-isomorphic to $K_{4}^{\mathbb{Z}_2} \setminus \{1_{1,2},1_{3,4},-1_{1,2},-1_{3,4}\}$ if it has four loops.
    Suppose that $n=5$. Then $G$ has $18$ edges, so $G$ is $K_5^{\mathbb{Z}_2}$ minus seven edges.
    If $G$ has at most two loops, then $G$ has a balanced $\{C_3^2, H_3\}$-copy, a contradiction.
    If $G$ has five loops, then $G$ has a balanced $K_3^1$-copy, a contradiction.
    In the remaining case that $G$ has $\ell$ loops with $\ell=3$ or $4$, one can see that $G$ is switching-isomorphic to $H_{\ell, 5-\ell}$.
    Therefore, we can assume that $n\ge 6$.

    By~\eqref{eq: Z2 K4 I}, $G$ has a balanced triangle, and we denote the vertices in such a triangle by $u,v,w$.
    In addition, the subgraph of $G$ induced by $\{u,v,w\}$ must have seven edges by~\eqref{eq: Z2 K4 II} and, hence, is switching-isomorphic to $H_{2,1}$.
    By relabelling, we can assume that $u,v$ has loops and $w$ has no loop.

    By the induction hypothesis, $G\setminus\{u,v,w\}$ is switching-isomorphic to $H_{n-2-\lfloor n/3 \rfloor, \lfloor n/3 \rfloor -1}$, $H_{n-3-\lfloor n/3 \rfloor, \lfloor n/3 \rfloor }$ if $2|(n-3)$, or $K_{4}^{\mathbb{Z}_2} \setminus \{1_{1,2},1_{3,4},-1_{1,2},-1_{3,4}\}$ if $n=7$.
    Let $X$ be the set of looped vertices and $Y$ be the set of non-looped vertices in $G\setminus\{u,v,w\}$.
    By~\eqref{eq: Z2 K4 II}, each vertex in $X\cup Y$ is incident with exactly four edges that are incident with $u$, $v$, or $w$.
    As $G$ has no banaced $K_3^1$-copy, there is exactly one edge incident with $x$ and $z$ for each $x\in X$ and $z\in \{u,v\}$.
    Also, each $x\in X$ is incident with two edges that are incident with $w$, and the triangle induced by $\{u,v,x\}$ is unbalanced.
    If $G\setminus\{u,v,w\}$ has two looped vertices $x$ and $x'$ such that the edge-multiplicity between $x$ and $x'$ is two, then the non-looped subgraph of $G$ induced by $\{w,x,x'\}$ is isomorphic to $C_3^2$, a contradiction.
    Therefore, $G\setminus\{u,v,w\}$ is not isomorphic to $K_{4}^{\mathbb{Z}_2} \setminus \{1_{1,2},1_{3,4},-1_{1,2},-1_{3,4}\}$.
    Then $Y\ne \emptyset$ and every pair of vertices $x\in X$ and $y\in Y$ has edge-multiplicity two.
    For each $y\in Y$, the edge-multiplicity into the vertices $u,v,w$ is either $(1,1,2)$ or $(2,2,0)$.
    If the first case occurs, then for any $x\in X$, the non-looped subgraph of $G$ induced by $\{w,x,y\}$ is isomorphic $C_3^2$.
    Thus, the second case holds.
    Therefore, $G$ is switching-isomorphic to the desired graph.
\end{proof}

\subsection{The extremal number of \texorpdfstring{$M(K_t)$}{M(Kt)}, \texorpdfstring{$t\ge 5$}{t >= 5} in \texorpdfstring{$Q_n(\mathbb{Z}_2)$}{Qn(Z2)}}

For $t \ge 5$, we compute $\ex(Q_n(\mathbb{Z}_2), M(K_t))$ explicitly in $n$ up to $O(t^3)$ error.

\begin{lemma}\label{lem: t-1 vertices}
    Let $t \ge 3$ and let $G$ be a $\mathbb{Z}_2$-gain graph on $t-1$ vertices such that it has a balanced $K_{t-1}$-copy and has neither balanced $K_{t-1}^1$-copy nor balanced $H_{t-1}$-copy.
    Then $|E(G)| \le (t-1)(t-2) + \lfloor \frac{t-2}{2} \rfloor$.
\end{lemma}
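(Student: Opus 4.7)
The plan is to use switching-invariance to reduce the problem to a clean combinatorial count on two simple parameters of $G$. Since balance is preserved under switching, I would begin by switching $G$ so that the guaranteed balanced $K_{t-1}$-copy becomes the subgraph on all $t-1$ vertices in which every non-loop edge is labelled $1$. Then $G$ decomposes into three disjoint pieces: the $K_{t-1}$ of $+1$-edges (contributing $\binom{t-1}{2}$), a set $L\subseteq[t-1]$ of looped vertices, and a set $D\subseteq \binom{[t-1]}{2}$ of pairs that additionally carry a $(-1)$-edge. Thus $|E(G)| = \binom{t-1}{2} + |L| + |D|$, and the inequality to prove reduces to
\begin{equation*}
    |L| + |D| \le \binom{t-1}{2} + \lfloor (t-2)/2 \rfloor.
\end{equation*}

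Second, I would translate each of the two forbidden-copy hypotheses into a restriction on $L$ and $D$. Because $K_{t-1}^1$ requires a loop at every vertex and $G$ has only $t-1$ vertices, absence of a balanced $K_{t-1}^1$-copy immediately forces $|L| \le t-2$. For $H_{t-1}$, observe that in our fixed switching, if some $v \in L$ satisfied $\{v,u\} \in D$ for every $u \neq v$, then the loop at $v$, together with the double edges from $v$ to each other vertex and the all-$+1$ $K_{t-2}$ on the leaves, would form a subgraph switching-isomorphic to $H_{t-1}$. Consequently every $v \in L$ must have at least one non-$D$-neighbor; equivalently, in the complement graph $\overline{D}$ on $[t-1]$, no vertex of $L$ is isolated.

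To finish, I would apply a short covering estimate: any graph on $[t-1]$ in which no vertex of $L$ is isolated has at least $\lceil |L|/2 \rceil$ edges, as can be seen by pairing the vertices of $L$ into a matching and, when $|L|$ is odd, matching the leftover $L$-vertex to a vertex outside $L$ (possible since $|L|\le t-2$). Applying this to $\overline{D}$ gives $|D| \le \binom{t-1}{2} - \lceil |L|/2 \rceil$, so
\begin{equation*}
    |L| + |D| \le \binom{t-1}{2} + |L| - \lceil |L|/2 \rceil = \binom{t-1}{2} + \lfloor |L|/2 \rfloor \le \binom{t-1}{2} + \lfloor (t-2)/2 \rfloor,
\end{equation*}
which yields the desired bound. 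The only step that genuinely requires care is the equivalence between a balanced $H_{t-1}$-copy in $G$ and the combinatorial condition on $L$ and $D$; once one notes that the leaf $K_{t-2}$ of any candidate copy is automatically balanced in the canonical switching (being a sub-$K_{t-2}$ of our fixed $K_{t-1}$), everything else is routine bookkeeping.
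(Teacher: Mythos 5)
Your proof is correct and takes essentially the same approach as the paper: switch so the balanced $K_{t-1}$-copy has all edges labelled $1$, derive $|L|\le t-2$ from the forbidden $K_{t-1}^1$-copy, derive that no looped vertex has double edges to every other vertex from the forbidden $H_{t-1}$-copy, and count. (The paper further normalizes so that every vertex but one is looped and then degree-counts rather than working with $L$ and $\overline{D}$ directly, which is only a cosmetic difference; one small nit is that your ``pairing'' remark really illustrates tightness of $|\overline{D}|\ge\lceil|L|/2\rceil$ rather than proving it --- the lower bound itself follows from a one-line degree sum, since each $v\in L$ has degree at least $1$ in $\overline{D}$.)
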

\begin{proof}
    Since $G$ has no balanced $K_{t-1}^1$-copy, $G$ has a non-looped vertex.
    By relabeling, we may assume that the vertex $1$ is non-looped.
    Since $G$ contains a balanced $K_{t-1}$-copy, we may assume that $1_{ij}$ is an edge of $G$ for all $1\le i<j\le t-1$.
    For each $2\le i \le t-1$, if the vertex $i$ is non-looped, then we attach a loop on~$i$ and delete the edge $(-1)_{1i}$ whenever it exists.
    This process does not decrease the number of edges, as well as it does not violate the condition that $G$ has a balanced $K_{t-1}$-copy and has no balanced $\{K_{t-1}^1, H_{t-1}\}$-copy.
    Therefore, we may assume that all vertices except for $1$ are looped.
    Since $G$ has no balanced $H_{t-1}$-copy, the number of non-loop edges incident with $i$ for each $2\le i\le t-1$ is strictly less than $2(t-2)$.
    Therefore,
    \begin{align*}
        |E(G)| \le 
        (t-2)
         + \frac{1}{2} 
         \big( 2(t-2)
         + (2(t-2)-1) (t-2) \big)
        = (t-1)(t-2) + \frac{t-2}{2}.
    \end{align*}
    Because $|E(G)|$ is an integer, we obtain the desired inequality.
\end{proof}

\begin{theorem}\label{thm: Z2 Kt}
        Let $t\ge 5$ and $n\ge 1$ be integers. Then
        \begin{align*}
            \ex(Q_n(\mathbb{Z}_2), M(K_t)) =
            \frac{t-2}{t-1}n^2 +  \left\lfloor\frac{t-2}{2} \right\rfloor \frac{1}{t-1} n + O(t^3).
        \end{align*}
\end{theorem}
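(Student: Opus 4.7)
The plan is to combine Lemma~\ref{lem: t-1 vertices} (the $(t-1)$-vertex case) with an averaging argument on the upper-bound side and an explicit construction on the lower-bound side. Since $t\ge 5$, Lemma~\ref{lem: Z2 realizations} tells us that $M(K_t)$ has exactly three $\mathbb{Z}_2$-gain-graphic realizations up to switching-isomorphism, namely $K_t$, $K_{t-1}^{1}$, and $H_{t-1}$, so the task reduces to maximizing $|E(G)|$ among $\mathbb{Z}_2$-gain subgraphs $G\subseteq K_n^{\mathbb{Z}_2}$ avoiding switching-isomorphic copies of these three graphs.

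For the lower bound, I would exhibit an explicit construction patterned on an extremal $(t-1)$-vertex gain graph $H$ saturating Lemma~\ref{lem: t-1 vertices}: partition $[n]$ into $t-1$ parts with sizes optimized (and generally not uniform), place doubled inter-part edges by default, single inter-part edges mirroring the matching of single edges in $H$, and loops on every vertex in the parts corresponding to looped vertices of $H$ (possibly supplemented by within-part $-1$ edges where this is needed to realize the correct linear term without creating balanced forbidden configurations). A projection-to-quotient argument shows that any balanced $K_t$-, $K_{t-1}^{1}$-, or $H_{t-1}$-copy in the construction would descend to a corresponding copy in a $(t-1)$-vertex gain graph chosen to have none; counting edges and optimizing part sizes then gives the stated bound $\frac{t-2}{t-1}n^2+\lfloor \frac{t-2}{2}\rfloor\frac{n}{t-1}+O(t^3)$.

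For the upper bound, given an extremal $G\subseteq K_n^{\mathbb{Z}_2}$, I would average over all $(t-1)$-subsets $S\subseteq V(G)$. Each $G[S]$ inherits the absence of balanced $K_{t-1}^{1}$- and $H_{t-1}$-copies, so whenever $G[S]$ contains a balanced $K_{t-1}$, Lemma~\ref{lem: t-1 vertices} yields $|E(G[S])|\le (t-1)(t-2)+\lfloor (t-2)/2\rfloor$; otherwise $G[S]$ is $M(K_{t-1})$-free and its edge count is bounded, by induction on $t$ (or via Theorem~\ref{thm:gain ESS}), strictly below this Lemma bound. The identity $\sum_S |E(G[S])|=L\binom{n-1}{t-2}+E_{\mathrm{nl}}\binom{n-2}{t-3}$, where $L$ and $E_{\mathrm{nl}}$ count loops and non-loop edges of $G$ (each loop lies in more subsets than each non-loop edge, so they must be tracked separately), then converts these pointwise bounds into a global bound on $|E(G)|=L+E_{\mathrm{nl}}$. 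The principal obstacle is pinning down the linear coefficient $\lfloor (t-2)/2\rfloor/(t-1)$ exactly rather than as an $O(n)$ error: a crude averaging produces the leading coefficient correctly but loses control of the linear term, so sharpening this will require a stability step verifying that in near-extremal $G$ almost every $(t-1)$-subset saturates Lemma~\ref{lem: t-1 vertices}, which in turn forces $G$ to be close to the blow-up structure used in the lower-bound construction.
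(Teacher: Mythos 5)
Your lower-bound sketch is essentially the paper's construction (a $(t-1)$-part structure with doubled edges between different parts, loops on all but the last part, and only $(-1)$-edges inside paired parts), and with the balanced-copy verification filled in it would work. The upper bound, however, has a genuine gap: averaging over all $(t-1)$-subsets cannot yield the stated bound -- it does not even recover the leading coefficient $\frac{t-2}{t-1}$, contrary to your claim that only the linear term is lost. Your counting identity turns a uniform per-subset bound $B$ into $E_{\mathrm{nl}} \le B\binom{n}{t-1}/\binom{n-2}{t-3} = B\,\frac{n(n-1)}{(t-1)(t-2)}$, so to reach density $\frac{t-2}{t-1}n^2$ you would need $B \approx (t-2)^2$. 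But a single $(t-1)$-subset of a legitimately $\{K_t, K_{t-1}^1, H_{t-1}\}$-free gain graph can span far more: Lemma~\ref{lem: t-1 vertices} is tight at $(t-1)(t-2)+\lfloor (t-2)/2 \rfloor$, and subsets attaining this occur even inside the extremal construction (take one vertex from each part), so no per-subset bound close to $(t-2)^2$ can hold. Worse, your bound for the ``no balanced $K_{t-1}$'' case is not available: absence of a balanced $K_{t-1}$-copy in $G[S]$ does not make $\FM(G[S])$ $M(K_{t-1})$-free (the realizations $K_{t-2}^1$ and $H_{t-2}$ from Lemma~\ref{lem: Z2 realizations} use only $t-2$ vertices), and the inductive bound or Theorem~\ref{thm:gain ESS} carry error terms ($O(t^3)$, $o(n^2)$) that are vacuous on a host with $t-1$ vertices. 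Concretely, $K_{t-1}^{\mathbb{Z}_2}$ with one parallel class deleted has $(t-1)^2-2 = (t-1)(t-2)+(t-3)$ edges and contains no balanced $K_{t-1}$-, $K_{t-1}^1$-, or $H_{t-1}$-copy, so the best usable $B$ is at least $(t-1)(t-2)$, and the averaging bound then exceeds the trivial bound $n^2 = |Q_n(\mathbb{Z}_2)|$. Your proposed stability repair also points the wrong way: in a near-extremal $G$ a typical $(t-1)$-subset has only about $(t-2)^2$ edges and does not saturate Lemma~\ref{lem: t-1 vertices}; only very special subsets do.

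The paper avoids this by a vertex-deletion induction on $n$ (with Theorem~\ref{thm: Z2 K4} as the base case $t=4$) rather than subset averaging: if $G$ contains a balanced $K_{t-1}$-copy on a vertex set $U$, then every vertex outside $U$ sends at most $2(t-2)$ edges into $U$ (else a balanced $K_t$-copy appears), $|E(G[U])|$ is controlled by Lemma~\ref{lem: t-1 vertices}, and the induction hypothesis is applied to $G\setminus U$; if $G$ has no balanced $K_{t-1}$-copy but still carries an $M(K_{t-1})$-restriction, the same deletion argument on its $(t-2)$-vertex support gives strictly more slack. It is this local bound on the edges meeting a clique copy, combined with deletion, that produces both the quadratic coefficient and the exact linear coefficient $\lfloor\frac{t-2}{2}\rfloor\frac{1}{t-1}$; a uniform local bound over all $(t-1)$-subsets structurally cannot.
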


\begin{proof}
    First, we show the upper bound, for which it suffices to show that if a gain subgraph $G$ of $K_n^{\mathbb{Z}_2}$ has no subgraph switching-isomorphic to $K_t$, $K_{t-1}^1$, or $H_{t-1}$, then
    \begin{equation}\label{eq: large clique upper bound}
        |E(G)| \le \frac{t-2}{t-1}n^2 +  \left\lfloor\frac{t-2}{2} \right\rfloor 
    \frac{n}{t-1} + (t-1)^3.
    \end{equation}
    By Theorem~\ref{thm: Z2 K4}, \eqref{eq: large clique upper bound} holds for $t=4$. We proceed by induction on $t \ge 4$ and $n \ge 0$.
    We may assume $n > (t-1)^2$ since otherwise $\frac{t-2}{t-1}n^2 +  \left\lfloor\frac{t-2}{2} \right\rfloor \frac{n}{t-1} + (t-1)^3 > n^2 = |Q_n(\mathbb{Z}_2)|$ and~\eqref{eq: large clique upper bound} holds trivially.

    Suppose that $G$ has a balanced $K_{t-1}$-copy, and let $U$ be its vertex set.
    Then $|U| = t-1$ and each vertex in $V(G)-U$ has at most $2(t-2)$ edges into $U$, since otherwise $G$ has a balanced $K_t$-copy.
    By induction and Lemma~\ref{lem: t-1 vertices},
    \begin{equation*}
        \begin{split}
            |E(G)|
            &\le 
            |E(G \setminus U)|
            + 
            2(t-2)(n-(t-1))
            +
            |E(G[U])| \\
            &\le
            \frac{t-2}{t-1}(n-(t-1))^2 + 
            \left\lfloor\frac{t-2}{2} \right\rfloor \frac{n-(t-1)}{t-1} + (t-1)^3 \\
            &\qquad\qquad+
            2(t-2)(n-(t-1))
            +
            (t-1)(t-2) + \left\lfloor \frac{t-2}{2} \right\rfloor \\
            &= 
            \frac{t-2}{t-1}n^2 +  \left\lfloor\frac{t-2}{2} \right\rfloor 
            \frac{n}{t-1} + (t-1)^3.
        \end{split}
    \end{equation*}

    We may therefore assume that $G$ does not contain a balanced $K_{t-1}$-copy, and inductively that $M(G)$ has an $M(K_{t-1})$-restriction.
    Let $U$ be a set of $t-2$ vertices that induce an $M(K_{t-1})$-restriction.
    Then $G[U]$ contains a balanced $K_{t-2}$-copy.
    Because $G$ has no balanced $K_{t-1}$-copy, each vertex in $V(G)-U$ has at most $2(t-3)$ edges into $U$, so, by induction,
    \begin{equation*}
        \begin{split}
            |E(G)|
            &\le 
            |E(G \setminus U)|
            + 
            2(t-3)(n-(t-2))
            +
            |E(G[U])| \\
            &\le
            \frac{t-2}{t-1}(n-(t-2))^2 +  
            \left\lfloor\frac{t-2}{2} \right\rfloor 
            \frac{n-(t-2)}{t-1} + (t-1)^3 \\
            &\qquad\qquad+
            2(t-3)(n-(t-2))
            +
            (t-2)^2 \\
            &<
            \frac{t-2}{t-1}n^2 +  \left\lfloor\frac{t-2}{2} \right\rfloor 
            \frac{n}{t-1} + (t-1)^3,
        \end{split}
    \end{equation*}
    where the last inequality holds because $n > (t-1)^2$.

    To prove a matching lower bound, we construct a dense gain subgraph $G$ of $K_n^{\mathbb{Z}_2}$ that induces a $M(K_t)$-free matroid; the construction depends on the parity of $t$.
    Let $q$ and $r$ be the integers such that $n = q(t-1) + r$ with $0\le r < t-1$.
    For $i=1,2,\ldots,t-2$, let $A_i = \{q(i-1)+j : 1 \le j \le q\}$, and let $A_{t-1} = [n] \setminus \bigcup_{i=1}^{t-2} A_i$. For $j=1,2,\ldots,\lfloor\frac{t-2}{2}\rfloor$, let $B_j = A_{2j-1} \cup A_{2j}$.

    When $t$ is even, let $G$ be the subgraph of $K_n^{\mathbb{Z}_2}$ obtained by restricting to the edge set
    \begin{equation*}
        E(G) = E(K_n^{\mathbb{Z}_2}) \setminus \left(\bigcup_{j=1}^{\frac{t-2}{2}} \{1_{st} : s,t \in B_j\} \bigcup E(K_n^{\mathbb{Z}_2}[A_{t-1}])\right),
    \end{equation*}
    so
    \begin{equation*}
        |E(G)| = 
        n^2 - \binom{2q}{2} \cdot \frac{t-2}{2} - (q+r)^2 \ge
        \frac{t-2}{t-1}n^2 + \frac{t-2}{2(t-1)}n - 10 t^2.
    \end{equation*}
    We show that $G$ has no balanced $\{K_t, K_{t-1}^1, H_{t-1}\}$-copy.
    Suppose that $G$ has a balanced $K_t$-copy, say $X$.
    Each triple of vertices in $B_j$ induces an unbalanced triangle, so $|V(X) \cap B_j| \le 2$ for each $1\le j\le \frac{t-2}{2}$.
    Since $G[A_{t-1}]$ spans no edges, $|V(X) \cap A_{t-1}| \le 1$.
    Then $|V(X)| \le t-1$, a contradiction.
    Suppose that $G$ has a balanced $K_{t-1}^1$-copy, say $Y$. As before, $|V(Y) \cap B_j| \le 2$ for each $1 \le j \le \frac{t-2}{2}$.
    As the vertices in $A_{t-1}$ span no loops, $V(Y) \cap A_{t-1} = \emptyset$, and we again reach a contradiction.
    Finally, if $G$ has a balanced $H_{t-1}$-copy, say $Z$ with center vertex $z$,
    then $z \notin A_{t-1}$, and we may assume that $z\in B_1$.
    It is easily shown that $|V(Z) \cap B_j| = 2$ for each $1\le j\le \frac{t-2}{2}$, and let $z'$ be the vertex in $V(Z) \cap B_1$ other than $z$.
    Then $1_{zz'}\notin E(G)$, a contradiction.

    When $t$ is odd, let $G$ be the subgraph of $K_n^{\mathbb{Z}_2}$ obtained by restricting to the edge set
    \begin{equation*}
        E(G) = E(K_n^{\mathbb{Z}_2}) \setminus \left(\bigcup_{j=1}^{\frac{t-3}{2}} \{1_{st} : s,t \in B_j\} \bigcup E(K_n^{\mathbb{Z}_2}[A_{t-2}]) \bigcup E(K_n^{\mathbb{Z}_2}[A_{t-1}])\right).
    \end{equation*}
    We omit the proof that $G$ has no balanced $\{K_t, K_{t-1}^1, H_{t-1}\}$-copy and the analysis of its number of edges, which is similar to the case when $t$ is even.
\end{proof}

We note that each of the graphs constructed in the proof of Theorem~\ref{thm: Z2 Kt} contains a copy of the Tur\'{a}n graph $T_{n,t-1}$, each of whose edges is labelled~1. When $n$ is divisible by $t-1$, the number of edges in these graphs is exactly $\frac{t-2}{t-1}n^2 +  \left\lfloor\frac{t-2}{2} \right\rfloor \frac{1}{t-1} n$. We ask if this construction is the best possible.

\section*{Acknowledgements}

This project started when Van der Pol visited the Discrete Mathematics Group at the Institute for Basic Science, and he thanks IBS for its hospitality. We thank Daryl Funk for discussions regarding gain-graphs and Hyonwoo Lee for discussions regarding extremal numbers. Campbell and Kim were supported by IBS grant IBS-R029-C1.

\bibliography{ref.bib}

\appendix

\section{Matchstick and origami geometries}\label{sec: matchstick and origami}

In this section, we consider the extremal numbers for matchstick and origami geometries. Kahn and Kung~\cite{KK1982} showed that these are the only examples of sequences of universal models for matroid varieties, other than projective geometries and Dowling geometries.

\subsection{Matchstick geometries}

The even-rank \emph{matchstick geometry} $M_{2k}(n)$ is isomorphic to $U_{2,n+1}^{\oplus k}$, the direct sum of $k$ $(n+1)$-point lines, while the odd-rank matchstick geometry $M_{2k+1}(n)$ is isomorphic to $M_{2k}(n) \oplus U_{1,1}$. We note that $M_t(m)$ is a submatroid of $M_r(n)$ if and only if $t\le r$ and $m\le n$.

\begin{proposition}\label{prop: matchstick}
    Let $n \ge m \ge 1$ and $r \ge t \ge 1$. Let $k := \lfloor \frac{r}{2} \rfloor$ and $\ell := \lfloor \frac{t}{2} \rfloor$.
    \begin{itemize}
        \item 
        If $r$ is even, then $\ex(M_{r}(n),M_{t}(m)) = (\ell-1)(n+1)+(k-\ell+1)m$; the extremal matroid is isomorphic to $U_{2,n+1}^{\oplus \ell-1} \oplus U_{2,m}^{\oplus k - \ell + 1}$.
        \item 
        If $r$ is odd and $t < r$, then $\ex(M_{r}(n),M_{t}(m)) = (\ell-1)(n+1)+(k-\ell+1)m + 1$; the extremal matroid is isomorphic to $U_{2,n+1}^{\oplus \ell-1} \oplus U_{2,m}^{\oplus k-\ell+1} \oplus U_{1,1}$.
        \item 
        If $r$ is odd and $t = r$, then $\ex(M_{r}(n),M_{t}(m)) = \ell(n+1)$; the extremal matroid is isomorphic to $M_{r-1}(n) \cong U_{2,n+1}^{\oplus \ell}$.
    \end{itemize}
\end{proposition}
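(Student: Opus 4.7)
The plan is to exploit the direct-sum structure of both $M_r(n)$ and $M_t(m)$. Because submatroids of a direct sum are direct sums of submatroids of the summands, every restriction $N \subseteq M_r(n)$ has the form
\[
    N = N_1 \oplus N_2 \oplus \cdots \oplus N_k \oplus N_{k+1},
\]
where $N_i$ is a subset of the $i$-th copy of $U_{2,n+1}$ and $N_{k+1} \subseteq U_{1,1}$ (present only when $r$ is odd). Each $N_i$ with $|N_i| \ge 2$ is itself a line $U_{2,|N_i|}$, while $|N_i| \le 1$ contributes at most a coloop.

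Write $M_t(m) = U_{2,m+1}^{\oplus \ell} \oplus W$, where $W = U_{1,1}$ if $t$ is odd and $W$ is empty if $t$ is even. The first step is an operational criterion for $M_t(m)$-freeness: because lines $U_{2,m+1}$ have rank $2$, they cannot be embedded across summands, so an embedding of $M_t(m)$ into $N$ amounts to choosing $\ell$ distinct indices $i_1,\ldots,i_\ell \in \{1,\ldots,k\}$ with $|N_{i_j}| \ge m+1$ and, when $W = U_{1,1}$, an additional element from a summand disjoint from those $\ell$ indices (either $N_{k+1}$, or some $N_j$ with $j \notin \{i_1,\ldots,i_\ell\}$ and $|N_j| \ge 1$).

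With this criterion in hand, both bounds are direct. For the lower bound, I would exhibit the candidate extremal matroid in each case and observe that the set $S := \{i : |N_i| \ge m+1\}$ has $|S| \le \ell - 1$ in Cases (i) and (ii), while in Case (iii) we have $|S| = k = \ell$ but every summand outside $S$ is empty, so no extra element is available to pair with the $\ell$ big lines. For the upper bound, given an $M_t(m)$-free $N$, I would partition $\{1,\ldots,k\} = S \sqcup T$ and estimate
\[
    |N| \le |S|\,(n+1) + |T|\,m + |N_{k+1}|,
\]
then optimise subject to the constraints enforced by $M_t(m)$-freeness.

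The main obstacle is Case (iii). Here $\ell = k$, and one must choose between including the coloop $N_{k+1}$ (which forces $|S| \le k-1$ and yields at most $(k-1)(n+1) + m + 1$ elements) and omitting it while keeping all $k$ lines at full length (which yields $k(n+1)$). The hypothesis $n \ge m$ ensures that the second option wins; verifying that no other configuration beats these two bounds is the only place where care is needed, and elsewhere the argument is routine bookkeeping on the direct-sum decomposition.
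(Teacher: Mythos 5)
Your direct-sum decomposition and embedding criterion are exactly the engine behind the paper's (very brief) proof, which details only the case where $r$ and $t$ are both even and handles it by the same pigeonhole count of summands containing at least $m+1$ points. For $m\ge 2$ your criterion is right: $U_{2,m+1}$ is connected, so each of the $\ell$ lines must sit inside a single line-summand, and when $t$ is odd the extra coloop must come from a summand disjoint from those $\ell$. The gap is in the step you declare routine. When $t$ is odd, your own criterion admits the configuration you only consider in Case (iii): exactly $\ell$ line-summands taken in full and every other summand empty. That restriction has rank $2\ell$, whereas $M_t(m)$ has rank $2\ell+1$, so it is $M_t(m)$-free for trivial reasons, and it has $\ell(n+1)$ elements. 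In Cases (i) and (ii) with $t$ odd this exceeds the value you are trying to prove whenever $n+1>(k-\ell+1)m$ (respectively $(k-\ell+1)m+1$): for instance $r=4$, $t=3$, $m=2$, $n=10$ gives an $M_3(2)$-free restriction of $M_4(10)$ with $11$ elements, against the claimed extremal value $2m=4$. So the optimisation ``subject to the constraints enforced by $M_t(m)$-freeness'' does not return the stated single expression in those subcases; it returns the maximum of $(\ell-1)(n+1)+(k-\ell+1)m$ (plus $1$ in Case (ii)) and $\ell(n+1)$. This is really a defect of the proposition as printed, which the paper's proof sidesteps by proving only the even--even case and asserting the rest is similar; but as a proof of the statement as given, your argument fails precisely at the point you labelled routine bookkeeping, and an honest execution of your plan would end with a corrected formula for odd $t$ rather than the one stated.

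Two smaller caveats. Your criterion needs $m\ge 2$: for $m=1$ the matroid $U_{2,2}$ is a pair of coloops and can be split across summands ($M_t(1)$ is the free matroid $U_{t,t}$, so freeness is just the condition that $N$ has rank at most $t-1$), and both the criterion and the stated formula and extremal matroids break there when $k>\ell$. Also, in Case (iii) your comparison $k(n+1)\ge (k-1)(n+1)+m+1$ is correct, but at $n=m$ it is an equality, so the extremal value $\ell(n+1)$ still stands while the extremal matroid is no longer unique; uniqueness of $U_{2,n+1}^{\oplus k}$ requires $n>m$.
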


\begin{proof}
    We first deal with the case that both $r$ and $t$ are even, so $M_r(n) \cong U_{2,n+1}^{\oplus k}$ and $M_t(m) \cong U_{2,m}^{\ell}$.
    Let $N$ be a submatroid of $M_r(n)$ without $M_t(m)$-submatroid.
    If $|N| > (\ell-1)(n+1) + (k-\ell+1)m$, then by the pigeonhole principle, $N$ contains $\ell$ disjoint copies of $U_{2,m+1}$ and therefore $N$ contains a $M_t(m)$-submatroid, a contradiction. Hence, we have $|N| \le (\ell-1)(n+1) + (k-\ell+1)m$. It is straightforward to see that if $|N| = (\ell-1)(n+1) + (k-\ell+1)m$, then $N\cong  U_{2,n+1}^{\oplus \ell-1} \oplus U_{2,m}^{\oplus k-\ell+1}$.

    The remaining cases are proved similarly.
\end{proof}

\subsection{Origami geometries}

The \emph{origami geometry} $O_r(n)$ is a rank-$r$ matroid obtained from a free matroid on $r$ elements, $p_1,p_2,\dots,p_{r}$, by adding $n-1$ points freely on the line joining $p_i$ and $p_{i+1}$ for each $i=1,\dots,r-1$.
Note that $O_r(1) \cong U_{r,r}$, $O_1(n) \cong U_{1,1}$, $O_2(n) \cong U_{2,n+1}$, and $O_r(n)$ has $(r-1)n+1$ points.
The following two propositions are straightforward from the definition.

\begin{proposition}\label{prop: origami1}
    For any $n$ and $r\ge t$, we have $\ex(O_r(n), U_{t,t}) = (t-2)n+1$ and 
    the extremal matroid is $O_{t-1}(n)$.
    \qed
\end{proposition}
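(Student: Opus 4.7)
The plan is to show that any rank-$s$ submatroid of $O_r(n)$ has at most $(s-1)n+1$ elements; since a $U_{t,t}$-free submatroid of $O_r(n)$ has rank at most $t-1$, this yields the bound $(t-2)n+1$. The matching lower bound is witnessed by $O_{t-1}(n)$, which embeds in $O_r(n)$ as the restriction to $p_1,\ldots,p_{t-1}$ together with the free points on the lines $L_1,\ldots,L_{t-2}$ (writing $L_i$ for the line through $p_i$ and $p_{i+1}$); it has rank $t-1$ and exactly $(t-2)n+1$ elements, hence is $U_{t,t}$-free.

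The key structural fact for the upper bound is that every circuit of $O_r(n)$ is a triangle contained in some $L_i$. Indeed, $\{p_1,\ldots,p_r\}$ is a basis, and each non-basis element $e\in L_i\setminus\{p_i,p_{i+1}\}$ has fundamental circuit $\{e,p_i,p_{i+1}\}$. Now fix a basis $B$ of a rank-$s$ submatroid $N$, and let $I_B=\{i:|B\cap L_i|\ge 2\}$. Any $e\in N\setminus B$ forms a circuit with a subset of $B$, which must be a triangle on some line $L_i$ containing two elements of $B$; hence $e\in L_i$ with $i\in I_B$. Therefore $N\subseteq B\cup\bigcup_{i\in I_B}L_i$, and since the sets $L_i\setminus B$ for $i\in I_B$ are pairwise disjoint (no free point lies on two distinct lines),
\begin{equation*}
    |N|\le s+\sum_{i\in I_B}\bigl(n+1-|B\cap L_i|\bigr)\le s+|I_B|(n-1).
\end{equation*}

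To bound $|I_B|$, I would observe that if $I_B$ decomposes into $k$ maximal runs of consecutive indices, then by the maximality the runs touch disjoint subsets of $\{p_1,\ldots,p_r\}$, so $\bigcup_{i\in I_B}L_i$ is a direct sum of $k$ smaller origami geometries $O_{\ell_j+1}(n)$, of total rank $|I_B|+k$. Since this union lies in $\cl(B)$, we obtain $|I_B|+k\le s$, and so $|I_B|\le s-1$ whenever $I_B\ne\emptyset$; combined with the previous display this gives $|N|\le s+(s-1)(n-1)=(s-1)n+1$. The case $I_B=\emptyset$ is trivial since then $|N|=|B|=s\le(s-1)n+1$. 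Equality throughout forces $s=t-1$, $k=1$, $|B\cap L_i|=2$ for every $i\in I_B$, and $N=\bigcup_{i\in I_B}L_i$, a union of $t-2$ consecutive lines, which is isomorphic to $O_{t-1}(n)$. The main technical step is the rank computation for $\bigcup_{i\in I_B}L_i$, relying on the fact that consecutive lines share exactly one element $p_j$ while lines from distinct runs share none.
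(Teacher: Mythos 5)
Your structural claim that every circuit of $O_r(n)$ is a triangle contained in some $L_i$ is false, and this breaks the upper-bound argument. Knowing the fundamental circuits with respect to the one basis $\{p_1,\ldots,p_r\}$ does not classify all circuits. Concretely, in $O_3(2)$ with free points $q_1\in L_1$ and $q_2\in L_2$, the set $C=\{p_1,q_1,p_3,q_2\}$ is a $4$-element circuit: $p_1,q_1$ span $L_1$, hence $p_2\in\cl(C)$, and then $p_2,p_3$ span $L_2$, so $C$ has rank $3<|C|$; meanwhile no three elements of $C$ are collinear, so every $3$-subset of $C$ is independent. Thus $C$ is a circuit lying on no single line $L_i$.

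Consequently, for a basis $B$ of a submatroid $N$, the fundamental circuit of $e\in N\setminus B$ need not be a triangle, and the containment $N\subseteq B\cup\bigcup_{i\in I_B}L_i$ can fail. In the same $O_3(2)$, take $N$ to be the full matroid and $B=\{q_1,p_3,q_2\}$ (a valid basis, since no three are collinear). Then $I_B=\{2\}$ only, $p_1\in N\setminus\bigl(B\cup L_2\bigr)$, and your displayed bound would give $|N|\le 3+1\cdot(2-1)=4$, whereas $|N|=5$. The repair is to bound the flat $\cl(N)$ directly rather than the basis-dependent set $B\cup\bigcup_{i\in I_B}L_i$: set $J=\{i:L_i\subseteq\cl(N)\}$, decompose $J$ into $k$ maximal runs of consecutive indices, and observe (this is exactly your rank computation) that $\bigcup_{i\in J}L_i$ has rank $|J|+k$; since this, together with any remaining free points of $\cl(N)$, must not exceed $s$, one gets $|J|+k\le s$, hence $|J|\le s-1$ and $|\cl(N)|\le s+|J|(n-1)\le (s-1)n+1$. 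The rank decomposition into runs is the right ingredient, but it must be applied to $\cl(N)$ (which automatically contains $N$), not to the line pattern of an arbitrary basis. A minor additional slip: the parenthetical ``no free point lies on two distinct lines'' overlooks the $p_j$'s, which do lie on two lines when $2\le j\le r-1$; since overcounting a union only strengthens an upper bound this is cosmetic, but the main gap above is not.
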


\begin{proposition}\label{prop: origami2}
    For any $n\ge m$ and $r$, we have $\ex(O_r(n), U_{2,m+1}) = (r-1)(m-1)+1$ and
    the extremal matroid is $O_r(m-1)$.
    \qed
\end{proposition}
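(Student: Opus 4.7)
The plan is to establish the two bounds separately. The key structural observation is that the only rank-$2$ flats of $O_r(n)$ with more than two elements are the $r-1$ lines $\ell_i = \cl_{O_r(n)}(\{p_i, p_{i+1}\})$ for $i = 1, \dots, r-1$, each of size $n+1$; any other pair of distinct elements spans a two-element flat. Consequently, a restriction $N = O_r(n) | S$ is $U_{2,m+1}$-free if and only if $|S \cap \ell_i| \le m$ for each $i = 1, \dots, r-1$, and this single constraint drives both bounds.

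For the lower bound, I would exhibit $O_r(m-1) \subseteq O_r(n)$ explicitly. Since $m \le n$, one can realize $O_r(m-1)$ as the restriction of $O_r(n)$ to the $r$ basis points $p_1, \dots, p_r$ together with any $m-2$ of the $n-1$ freely-added points on each of the $r-1$ lines $\ell_i$. The resulting restriction has $r + (r-1)(m-2) = (r-1)(m-1) + 1$ elements and is isomorphic to $O_r(m-1)$; since each of its long lines contains exactly $m$ points, it contains no submatroid isomorphic to $U_{2,m+1}$.

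For the upper bound, I would combine the per-line constraints $|S \cap \ell_i| \le m$ with a careful count of the basis points of $O_r(n)$ contained in $S$. Summing the per-line inequalities gives $\sum_i |S \cap \ell_i| = |S| + k \le (r-1)m$, where $k$ counts the interior basis points $p_2, \dots, p_{r-1}$ lying in $S$ (each contributing to two terms in the sum because it lies on both $\ell_{i-1}$ and $\ell_i$). Sharpening this to the target bound $(r-1)(m-1) + 1$ is the step I expect to be the main obstacle; the most promising approach is induction on $r$, contracting an endpoint $p_1$ to reduce to a closely related $U_{2,m+1}$-free problem on $O_{r-1}(n)$ and then combining the inductive bound with an analysis of the elements supported on the collapsed line $\ell_1$.
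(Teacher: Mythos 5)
Your structural reduction is correct: with the paper's restriction-based notion of $N$-free, $O_r(n)|S$ is $U_{2,m+1}$-free precisely when $|S\cap\ell_i|\le m$ for every $i$, and your lower-bound construction of a copy of $O_r(m-1)$ is fine. However, the step you flag as the main obstacle is not a technical difficulty that an induction on $r$ could overcome: it cannot be closed, because for $r\ge 3$ the claimed value is not an upper bound under your (correct) characterization. If $n>m$, take $m$ of the freely added points on each line $\ell_i$ and no basis points; every line then meets the chosen set in exactly $m$ points, so the restriction is $U_{2,m+1}$-free, yet it has $(r-1)m>(r-1)(m-1)+1$ elements (and $(r-1)m$ is in fact the true maximum there, since every element lies on some $\ell_i$). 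If $n=m$, take all freely added points together with a set of pairwise non-adjacent basis points: already in $O_3(2)$ the four points $\{p_1,a,b,p_3\}$, with $a$ on $\ell_1$ and $b$ on $\ell_2$, induce $U_{3,4}$, which has no $U_{2,3}$ restriction, while the claimed value is $3$; in general one gets at least $(r-1)(m-1)+\lceil r/2\rceil$ points. So the sharpening you hoped to obtain is impossible, and the assertion that $O_r(m-1)$ is extremal fails as well.

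For comparison with the paper: no argument is given there (the proposition is asserted as immediate from the definition), so there is no proof to match your approach against; what your attempt actually exposes is a problem with the statement rather than with your technique, at least for $r\ge3$. The formula $(r-1)(m-1)+1$ is what one would expect if $U_{2,m+1}$ were excluded as a \emph{minor} rather than as a restriction --- note that $U_{3,4}$ and the scattered configurations above do contain $U_{2,m+1}$ minors, and your suggested contraction of $p_1$ is exactly a minor operation that is not available when only restrictions are forbidden --- or in the case $r=2$, where the bound is trivially correct. I would either record the corrected extremal value that your per-line analysis yields (for instance $(r-1)m$ when $n>m$), or reformulate the exclusion minor-wise, rather than continue trying to prove the stated bound.
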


The below proposition together with the previous two results gives the full characterization of the extremal numbers $\ex(O_r(n),O_t(m))$.

\begin{proposition}\label{prop: origami3}
    For $n > m \ge 2$ and $r \ge t \ge 3$, we have
    $\ex(O_r(n), O_t(m))
        =
        (r-1)n + 1 - \left\lfloor \frac{r-2}{t-2} \right\rfloor$.
    For $n = m \ge 2$ and $r \ge t \ge 3$, we have
    $\ex(O_r(n), O_t(n))
        =
        (r-1)n + 1 - \lfloor r/t \rfloor$.
\end{proposition}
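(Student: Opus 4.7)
The plan is to reformulate $\ex(O_r(n), O_t(m))$ as a minimum hitting-set problem and analyze it case by case.

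First I would characterize the copies of $O_t(m)$ inside $O_r(n)$. Since matroid embeddings preserve the intersection pattern of long lines, and the long lines of $O_r(n)$ meet in a path pattern (lines $L_i, L_j$ meet iff $|i-j|=1$), the images of the $t-1$ long lines of $O_t(m)$ must be $t-1$ consecutive lines $L_a, L_{a+1}, \ldots, L_{a+t-2}$ of $O_r(n)$ for some $a \in [1, r-t+1]$, and the interior joints $q_2, \ldots, q_{t-1}$ of $O_t(m)$ are forced to map to the shared points $p_{a+1}, \ldots, p_{a+t-2}$. Consequently, a set $X \subseteq E(O_r(n))$ meets every copy of $O_t(m)$ if and only if, for each \emph{position} $a \in [1, r-t+1]$, either some $p_{a+i}$ with $i \in [1, t-2]$ lies in $X$ (a \emph{type-A} hit, of size $1$), or some line $L_{a+i-1}$ with $i \in [1, t-1]$ is saturated, meaning $|L_{a+i-1} \cap X| \ge n+1-m$ (a \emph{type-B} hit). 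Hence $\ex(O_r(n), O_t(m)) = |O_r(n)| - h$, where $h$ is the minimum hitting-set size.

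In the case $n > m$, type-B hits cost $\ge 2$ deletions while type-A hits cost $1$. The construction places interior joints $p_{t-1}, p_{2t-3}, p_{3t-5}, \ldots$ at spacing $t-2$, giving $\lceil (r-t+1)/(t-2) \rceil = \lfloor (r-2)/(t-2) \rfloor$ deletions that block every position by type A. For the lower bound, I would show that any minimum hitting set may be taken to consist only of interior joints: a saturated line $L_j$ (costing $\ge n+1-m \ge 2$ deletions) can be replaced by the pair $\{p_j, p_{j+1}\}$ (both interior when $2 \le j \le r-2$, with simple adjustments at the boundary), which as type-A hits covers exactly the same positions $[j-t+2, j] \cap [1, r-t+1]$ at cost at most $n+1-m$. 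Iterating, we reduce to a hitting set of only interior joints; the problem then becomes a sliding-window cover on $\{p_2, \ldots, p_{r-1}\}$ with windows $[a+1, a+t-2]$ of length $t-2$, whose minimum size is $\lfloor (r-2)/(t-2) \rfloor$ by a direct gap analysis.

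In the case $n = m$, we have $n+1-m = 1$, so single line deletions are type-B hits as well. An interior joint $p_j$ now blocks $[j-t+1, j] \cap [1, r-t+1]$ (up to $t$ positions, combining type A on $p_j$ with type B on $L_{j-1}$ and $L_j$), while any other single element blocks at most $t-1$ positions. The construction places $\lfloor r/t \rfloor$ interior joints at spacing $t$, shifting near the boundary when $r$ is divisible by $t$ so as to avoid the endpoint joints $p_1, p_r$. The lower bound now follows from direct counting: if $X$ is a hitting set, then since every position is blocked by some single element of $X$, we have $|X| \cdot t \ge r-t+1$, so $|X| \ge \lceil (r-t+1)/t \rceil = \lfloor r/t \rfloor$. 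The main obstacle is executing the exchange argument for the $n > m$ case cleanly, since interior joints lie on two adjacent lines simultaneously and removing groups of elements from one line can affect the saturation status of neighboring lines; care is needed at each iteration to ensure that the hitting-set property is preserved and that the size of $X$ does not increase.
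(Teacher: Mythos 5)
Your proposal is correct, and its skeleton matches the paper's: both reduce the problem to blocking every ``window'' of $t-1$ consecutive lines $L_a,\dots,L_{a+t-2}$ (using that any $O_t(m)$-copy occupies consecutive lines with its internal joints forced onto $p_{a+1},\dots,p_{a+t-2}$), and both use the same constructions with joints deleted at spacing $t-2$ (resp.\ $t$). Where you differ is in how the lower bounds are executed. For $n>m$ the paper replaces each deleted non-joint point individually by an adjacent joint (with a tie-breaking rule so that a line carrying two or more deletions contributes both of its joints), whereas you exchange per saturated line, swapping its $\ge n+1-m\ge 2$ deletions for the joint pair $\{p_j,p_{j+1}\}$; these are minor variants of the same idea, and your coverage computation $[j-t+2,j]$ together with the observation that removed elements are non-joints lying on a single line (so other lines' saturation is untouched) is exactly the care needed to make either version rigorous. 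For $n=m$ your argument is genuinely different: instead of the paper's replacement-to-joints followed by the ``no $t$ consecutive surviving joints'' count, you use a direct double count — each element blocks at most $t$ of the $r-t+1$ positions, so $|X|\ge\lceil (r-t+1)/t\rceil=\lfloor r/t\rfloor$ — which is slightly slicker because it sidesteps verifying that the replacement preserves the blocking property. Both routes rely on the same arithmetic identities ($\lceil (r-t+1)/(t-2)\rceil=\lfloor (r-2)/(t-2)\rfloor$ and $\lceil (r-t+1)/t\rceil=\lfloor r/t\rfloor$), and your boundary adjustments (clipping the pair to interior joints; shifting the $n=m$ construction off $p_r$, which is actually unnecessary since deleting $p_r$ still blocks the last window) are all consistent with what the paper does.
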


\begin{proof}
    We first handle the case that $n=m$.
    Let $X\subseteq E(O_r(n))$ be a set such that $O_r(n) \setminus X$ is $O_t(m)$-free.
    We claim that $|X| \ge \lfloor r/t \rfloor$.
    For each point $x \in X \setminus \{ p_1,\dots,p_{r} \}$, we replace $x$ with $p_i$ or $p_{i+1}$, where the line spanned by $p_i$ and $p_{i+1}$ contains $x$. If we denote the resulting set by $X'$, then $|X'| \le |X|$ and $O_r(n) \setminus X'$ is $O_t(m)$-free.
    Therefore, we may assume that $X \subseteq \{p_1,\ldots,p_{r}\}$.
    As $O_r(n) \setminus X$ is $O_t(m)$-free, there are no $t$ consecutive points $p_{i+1},\ldots,p_{i+t}$ in $\{p_1,\ldots,p_r\} \setminus X$, which shows $|X| \ge \lfloor r/t \rfloor$.
    It is readily shown that  $O_r(n) \backslash \{ p_{kt} : 1 \le k \le \lfloor r/t \rfloor \}$ is $O_t(n)$-free. Thus, $\ex(O_r(n), O_t(m)) = (r-1)n + 1 - \lfloor r/t \rfloor$.

    Next, we deal with the case that $n>m$.
    The proof is almost identical to that of the previous case.
    Let $X\subseteq E(O_r(n))$ be a set such that $O_r(n) \backslash X$ is $O_t(m)$-free.
    We claim that $|X| \ge \lfloor \frac{r-2}{t-2} \rfloor$.
    For each point $x \in X \setminus \{ p_1,\dots,p_{r} \}$, we replace $x$ with $p_i$ or $p_{i+1}$, where the line spanned by $p_i$ and $p_{i+1}$ contains $x$. However, unlike the previous case $n=m$, we additionally require that $x$ be replaced by $p_{i+1}$ if $p_i$ is already in $X$, and by $p_i$ otherwise.
    Then the resulting set $X'$ satisfies $|X'| \le |X|$ and $O_r(n) \setminus X'$ is $O_t(m)$-free.
    Therefore, we may assume that $X \subseteq \{p_1,\ldots,p_{r}\}$.
    Since $O_r(n) \setminus X$ is $O_t(m)$-free, we deduce that $\{p_{i+1},\ldots,p_{i+t-2}\} \cap X \ne \emptyset$ for each $1 \le i \le r-t+1$, which implies that $|X| \ge \lfloor \frac{r-2}{t-2} \rfloor$.
    One can check that $O_r(n) \setminus \{ p_{k(t-2)+1} : 1 \le k \le \lfloor \frac{r-2}{t-2} \rfloor \}$ is $O_t(m)$-free. Therefore, $\ex(O_r(n), O_t(m)) = (r-1)n + 1 - \lfloor \frac{r-2}{t-2} \rfloor$.
\end{proof}

\end{document}